\documentclass[10pt,leqno]{amsart}
\usepackage{graphicx}
\usepackage{indentfirst,csquotes}

\usepackage{amssymb,amsthm,amsmath, mathtools, mathrsfs}
\usepackage{xcolor,paralist,hyperref,fancyhdr,etoolbox, resmes}
\usepackage[T1]{fontenc}
\usepackage[english]{babel}
\usepackage[most]{tcolorbox}
\usepackage[shortlabels]{enumitem}
\usepackage{tikz}

\newtheorem{theorem}{Theorem}[]
\newtheorem{definition}[theorem]{Definition}
\newtheorem{example}[theorem]{Example}
\newtheorem{lemma}[theorem]{Lemma}
\newtheorem{proposition}[theorem]{Proposition}
\newtheorem{corollary}[theorem]{Corollary}

\newtheorem{remark}[theorem]{Remark}
\newtheorem{claim}[theorem]{Claim}

\def\L{\mathbf{L}}
\def\R{\mathbf{R}}
\def\Rkn{\mathbf{R}^{k,n}}
\def\Rnm{\mathbf{R}^{n,m}}

\def\RnmR{\mathbf{R}^{n,m} \times \R}
\def\H{\mathcal{H}}

\def\dim{\mathrm{dim}}
\def\Var{\mathrm{Var}}
\def\Div{\operatorname{Div}}
\def\spt{\operatorname{spt}}

\DeclareMathOperator*{\esssup}{ess\,sup}

\DeclareMathOperator*{\Tan}{Tan}

\newcommand{\mcal}[1]{\mathcal{#1}}
\newcommand{\abs}[1]{\left|#1\right|}
\newcommand{\norm}[1]{\left\Vert #1 \right\Vert} 
\newcommand{\parent}[1]{\left( #1 \right)} 

\newcommand{\esc}[2]{\left\langle #1, #2 \right\rangle}

\newcommand{\ol}[1]{\overline{#1}}
\newcommand{\opn}[1]{\operatorname{#1}}
\newcommand{\enorm}[1]{\abs{#1}_\mathrm{\R^{n+m}}} 
\newcommand{\lnorm}[1]{\esc{#1}{#1}}
\newcommand{\ve}{\varepsilon}

\hypersetup{ colorlinks=true, linkcolor=black, filecolor=black, urlcolor=black }

\begin{document}

\title{Spacelike Brakke Flows with Boundary in Pseudo-Euclidean Space} 
\author[J. Hervás]{Javier Hervás}
\date{\today}
\address{Departamento de Geometr\'ia y Topolog\'ia  \newline
Instituto de Matem\'aticas de Granada (IMAG) \newline
Universidad de Granada\newline
18071 Granada, Spain
}
\email{hervas@ugr.es}
\thanks{This research was partially supported by the grant PID2024-156031NB-I00 and by the  IMAG–Maria de Maeztu grant CEX2020-001105-M, both funded by MICIU/AEI/10.13039/501100011033 and ERDEF.}

\maketitle
	
\let\thefootnote\relax
\footnotetext{MSC2020: Primary 00A05, Secondary 00A66.} 
	
\begin{abstract}
We develop a weak formulation of spacelike mean curvature flow in pseudo-Euclidean space. The framework is based on spacelike integer rectifiable varifolds and a pseudo-Euclidean version of first variation. Particular attention is paid to the presence of a fixed spacelike boundary. We introduce a generalized mean curvature vector and a weak spacelike conormal along the boundary.
Under natural uniform spacelikeness and curvature assumptions, we prove a closure theorem for spacelike varifolds with boundary.
We then define spacelike Brakke flows by means of the corresponding pseudo-Euclidean Brakke inequality, and prove a compactness theorem for sequences of spacelike Brakke flows with fixed boundary. Basic monotonicity properties are established, reflecting the sign structure of the ambient indefinite metric. Finally, we adapt Ilmanen's elliptic regularization procedure \cite{Ilmanen94} to prove existence of spacelike Brakke flows, and the local regularity theorem of White \cite{White05} to the pseudo-Euclidean setting. The results provide a varifold setting for studying weak spacelike mean curvature flow beyond singularities.

\end{abstract} 

\tableofcontents

\section{Introduction}

Mean curvature flow is one of the most natural geometric evolutions for submanifolds of Riemannian manifolds. In its classical form, a family of immersions $F_t : M \to N$ evolves by the equation
\[
\frac{\partial F}{\partial t} = H,
\]
where $H$ denotes the mean curvature vector. Even when the initial submanifold is smooth, singularities may form in finite time, and one is led to weak formulations of the flow. In the Euclidean setting, Brakke's formulation in terms of time-dependent integral varifolds has proved to be a fundamental tool for studying mean curvature flow beyond singularities; see Brakke's original work \cite{Brakke78} and the subsequent developments by Ilmanen \cite{Ilmanen94}, White \cite{White05, White21}, Edelen \cite{Edelen17}, Tonegawa \cite{Tonegawa19} and many others. The corresponding compactness and closure properties are rooted in the foundational work of Allard \cite{Allard72}, Brakke \cite{Brakke78} and Ilmanen \cite{Ilmanen94}.

The purpose of this paper is to develop a weak varifold framework for spacelike mean curvature flow in pseudo-Euclidean space. More precisely, we work in
\[
\mathbf R^{n,m}=(\mathbf R^{n+m},\langle\cdot,\cdot\rangle),
\]
where the metric has signature $(n,m)$, and we consider spacelike $n$-dimensional submanifolds. In the smooth setting, spacelike mean curvature flow has been studied extensively, in particular through parabolic methods for spacelike graphs and slices; see, for instance, the work of Ecker and Huisken \cite{EH91, Ecker97}, and Li and Salavessa \cite{LiSalavessa11}. In the case of spacelike hypersurfaces in Lorentzian signature ($m=1$), Ecker \cite{Ecker03} strengthened some results using the inherent causal properties of the ambient space. These properties are harder to apply to varifolds and, in any case, they do not hold for $m>1$, underlining the power of the measure perspective. More recently, Lambert and Lotay \cite{LambertLotay21} developed a detailed analysis of spacelike mean curvature flow with boundary, including important gradient estimates and tensorial identities that are specific to the indefinite ambient metric. Our aim here is complementary: we introduce a weak formulation, in the spirit of Brakke flow, that is stable under natural compactness assumptions and that includes fixed spacelike boundary.

There are several basic difficulties that distinguish the spacelike semi-Riemannian setting from the classical Riemannian one. First, the Grassmannian of spacelike $n$-planes in $\mathbf R^{n,m}$ is noncompact. Thus, a sequence of spacelike submanifolds with locally bounded area need not have a spacelike varifold limit unless one imposes a quantitative uniform spacelikeness condition. In this paper this role is played by the gradient function $v^2$, which controls how far the tangent planes are from the light cone. A uniform bound for $v^2$ on compact subsets ensures that the relevant tangent planes remain in compact subsets of the spacelike Grassmannian.

Second, the area measure naturally induced by the pseudo-Euclidean metric differs from the Euclidean Hausdorff measure. If a spacelike rectifiable set is represented locally as a Lipschitz embedding, then its spacelike area density is given by the square root of the determinant of the induced pseudo-Euclidean metric. This density is absolutely continuous with respect to the Euclidean Hausdorff measure, but it may degenerate as the tangent planes approach the light cone. This is another reason why a uniform spacelikeness assumption is essential in the compactness theory.

Third, the sign of the evolution is different from the Riemannian case. Along a smooth spacelike mean curvature flow $(M_t)_{t \in [0,T)}$ one has
\[
\frac{d}{dt}\int_{M_t}\phi\, d\mu_t
=
\int_{M_t}
\big(
-\phi\langle H,H\rangle
+\langle \nabla\phi,H\rangle
+\partial_t\phi
\big) \,d\mu_t .
\]
Since $H$ is normal to a spacelike submanifold, the quantity $\langle H,H\rangle$ is nonpositive. Thus the term $-\langle H,H\rangle$ is nonnegative, and the basic mass monotonicity has the opposite sign from the usual Riemannian Brakke flow. This sign change is reflected in our definition of spacelike Brakke flow.

We begin by introducing spacelike integer rectifiable Radon measures and their associated spacelike varifolds, in a manner analogous to \cite{BNO12}. Given a spacelike rectifiable set $M$ with integer multiplicity, the associated varifold records both the spacelike area measure and the approximate tangent plane. The first variation is defined by integrating the semi-Riemannian tangential divergence against the measure. In the smooth case, the divergence theorem gives
\[
\int_M \operatorname{Div}_M X \,d\sigma^n
=
-\int_M \langle X,H\rangle \,d\sigma^n
+
\int_{\partial M}\langle X,\eta\rangle \,d\sigma^{n-1},
\]
where $\eta$ is the outward conormal along the boundary. This identity motivates our weak definition of generalized mean curvature and generalized boundary measure.

A point that requires care is the boundary term. In the Riemannian theory, the boundary contribution is represented by a vector-valued measure supported on the fixed boundary. In the spacelike semi-Riemannian setting, the boundary conormal should remain spacelike and should not degenerate into a null direction. We therefore introduce a class $S\mathscr{V}_n(U,\Gamma)$ of spacelike integer $n$-varifolds with boundary $\Gamma$ in an open subset $U$ of $\Rnm$, for which the first variation decomposes as
\[
\delta V(X)
=
-\int \langle H,X\rangle \, d\mu
+
\int_\Gamma \langle \nu,X\rangle \, d\sigma^{n-1},
\]
where $H$ is normal to the varifold and $\nu$ is a weak spacelike conormal along $\Gamma$ satisfying
\[
0<\langle \nu,\nu\rangle\leq 1
\]
wherever $\nu\neq 0$. This condition is stable under the convergence hypotheses used later and rules out loss of spacelikeness at the boundary.

The first main result of the paper is a closure theorem for spacelike integer varifolds with boundary: 
\begin{theorem}
	Let $(\mu_i)_{i \in \mathbf{N}} \subset S\mathscr{V}_n(U,\Gamma)$, where $\Gamma$ is a smooth spacelike $(n-1)$-dimensional submanifold of $U$. Suppose that
	the $\mu_i$ converges weakly to a Radon measure $\mu$, 
	\begin{equation*}
		d_K = \sup_i \int_K -\lnorm{H_i}\, d\mu_i < \infty, 
	\end{equation*}
	\begin{equation*}
		g_K = \sup_i \sup \{v^2(\mu_i,x) : x \in K \} < \infty, 
	\end{equation*}
	\begin{equation*}
		n_K = \inf_i\inf \{ \lnorm{\nu_i(x)} : x \in K \cap \Gamma,\, \nu_i(x) \neq 0 \} > 0. 
	\end{equation*}
	for every $K \Subset U$. Then $\mu \in S\mathscr{V}_n(U,\Gamma)$.  
\end{theorem}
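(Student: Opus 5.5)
The plan is to reduce the statement to the Euclidean Allard compactness theorem for integral varifolds with boundary, using the bound $g_K$ to compare pseudo-Euclidean and Euclidean quantities. Uniform spacelikeness is what makes this comparison possible.

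\textbf{Step 1: comparison with Euclidean quantities.} On $K$ the bound $g_K$ confines the approximate tangent planes of $\mu_i$ to a compact subset $\mathcal{C}_K$ of the spacelike Grassmannian. On $\mathcal{C}_K$ the following hold:
\begin{enumerate}[(a)]
\item the spacelike area density is comparable to Euclidean Hausdorff density, with constants depending only on $g_K$;
\item the semi-Riemannian tangential divergence $\Div_P X$ is a smooth function of $(P,DX)$, linear in $DX$;
\item for a spacelike plane $P$, the pseudo-Euclidean normal space is a negative definite complement, so $-\lnorm{H}$ controls the Euclidean norm $\enorm{H}^2$ up to constants depending on $g_K$.
\end{enumerate}
Consequently each $\mu_i$ determines a Euclidean integral varifold $V_i$ (underlying measure $\H^n$ with the same multiplicity). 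Its Euclidean first variation is the pseudo-Euclidean one rewritten through the change of density and of projection. This gives
\[
\|\delta^{E}V_i\|(K)\le C(g_K)\Big(\mu_i(K)+\int_K -\lnorm{H_i}\,d\mu_i\Big)^{1/2}\mu_i(K)^{1/2}+C\,\sigma^{n-1}(\Gamma\cap K),
\]
using $\lnorm{\nu_i}\le 1$ and $g_K$ to bound $|\nu_i|$ in Euclidean norm. Masses are locally bounded by weak convergence.

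\textbf{Step 2: Allard compactness.} Allard's compactness theorem now yields a subsequence $V_i\to V$, with $V$ Euclidean integral. The limit measure of $\|V_i\|$ is comparable to $\mu$. Tangent planes of $V$ lie in $\mathcal{C}_K$, because $\mathcal{C}_K$ is closed and the Young-measure limit is supported there. Hence $V$ is spacelike, and $\mu$ is its spacelike area measure by continuity of the density on $\mathcal{C}_K$. Here $\mu$ is the limit of the spacelike area measures, since $\Div$ and the density are continuous on $\mathcal{C}_K$. Passing to the limit in the pseudo-Euclidean first variation, $\delta\mu_i(X)\to\delta\mu(X)$.

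\textbf{Step 3: decomposing the limit first variation.} Write
\[
\delta\mu(X)=\lim_i\Big(-\int\lnorm{H_i}{X}\,d\mu_i+\int_\Gamma\lnorm{\nu_i}{X}\,d\sigma^{n-1}\Big).
\]
For the first term, the vector measures $H_i\mu_i$ are bounded in $L^2$ with respect to the Euclidean norm. By the standard lower semicontinuity argument (Hutchinson / Reshetnyak), they converge to $H\mu$ with $H\in L^2_{loc}(\mu)$ and $\int_K -\lnorm{H}\,d\mu\le d_K$; here $-\lnorm{\cdot}$ is convex on the normal cone of $\mathcal{C}_K$, or one can simply use Euclidean norm control. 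The boundary terms $\nu_i\sigma^{n-1}\llcorner\Gamma$ are bounded in $L^\infty$, so a subsequence converges weakly-$*$ to $\nu\,\sigma^{n-1}\llcorner\Gamma$. The condition $\lnorm{\nu}\le1$ passes to the limit only in a convexified sense; this is the main obstacle. Two further properties must also be shown for the limit:
\begin{enumerate}[(i)]
\item $H$ is normal to $\mu$, which would follow from Brakke's perpendicularity theorem applied to the Euclidean integral varifold $V$ after translating normality;
\item $\lnorm{\nu}>0$ where $\nu\ne0$.
\end{enumerate}
For (ii), weak limits of vectors with $\lnorm{\nu_i}\ge n_K$ could cancel. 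My plan is to show that the $\nu_i$ are a.e. conormals of the $\mu_i$ at $\Gamma$. Each $\nu_i$ then lies in the spacelike future-like cone orthogonal to $T\Gamma$, determined by the tangent cone of $\mu_i$ at $\Gamma$. This set is convex inside $\mathcal{C}_K$: it is the positive cone of a spacelike half-plane bundle, and the reverse Cauchy--Schwarz inequality shows $\lnorm{a+b}\ge\lnorm{a}+\lnorm{b}$ there. With $n_K$, this prevents the limit from dropping to $0$ or becoming null unless $\nu=0$, while the bound $\le1$ is preserved by the same concavity on the averaged cone. If this convexity fails, I would instead localize via blow-ups at $\sigma^{n-1}$-a.e. point of $\Gamma$. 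There the limit is a union of half-planes with multiplicity, and $\nu$ can be computed explicitly as a sum of unit spacelike conormals, whose pseudo-norm is bounded below.
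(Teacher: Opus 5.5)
\textbf{Main gap: the upper bound $\langle\nu,\nu\rangle\le 1$ in Step 3.} This constraint is exactly what $\mu\in S\mathscr{V}_n(U,\Gamma)$ adds beyond rectifiability, and your claim that ``the bound $\le 1$ is preserved by the same concavity'' has the inequality backwards. On one sheet of the spacelike cone of $\Tan(\Gamma,x)^\perp$ (signature $(1,m)$), the map $u\mapsto\sqrt{\langle u,u\rangle}$ is concave. So averaging can only \emph{raise} the pseudo-norm: lower bounds survive weak-$*$ limits, upper bounds do not.

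Concretely, take $e,f\in\Tan(\Gamma,x)^\perp$ with $\langle e,e\rangle=1=-\langle f,f\rangle$ and $\langle e,f\rangle=0$. The vectors $u_\pm=\cosh a\,e\pm\sinh a\,f$ satisfy $\langle u_\pm,u_\pm\rangle=1$, but their average $\cosh a\,e$ has pseudo-norm squared $\cosh^2 a>1$. If the $\nu_i$ oscillate along $\Gamma$ between such vectors, the weak-$*$ limit violates the constraint, and nothing in your argument rules this out. You would need genuinely new input, such as a strong-convergence statement for the $\nu_i$ that exploits $d_K$; convexity alone cannot do it.

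A duality argument does not rescue this either. For spacelike $Z,\nu_i\in\Tan(\Gamma,x)^\perp$, the reverse Cauchy--Schwarz inequality gives $|\langle Z,\nu_i\rangle|\ge\sqrt{\langle Z,Z\rangle\langle\nu_i,\nu_i\rangle}$. So the inequality $\langle Z,\nu_i\rangle\le\sqrt{\langle Z,Z\rangle}$, which the paper uses when it tests against fields with $0<\langle Z,Z\rangle\le1$, fails. That step of the paper's proof does not supply the missing bound either.

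\textbf{Second gap: identifying $\nu_i$ as conormals.} Your positivity argument (ii) and your Step 1 Euclidean bound on $\nu_i$ both assume that $\nu_i$ lies in the boundary trace of $\Tan(\mu_i,\cdot)$. That is not part of the definition. Condition (2) only asks $\eta\in\Tan(\Gamma,x)^\perp$ to be spacelike, and the Remark after the definition lists tangency to $\mu$ only as an optional extra requirement.

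Without it, $g_K$ says nothing about $\nu_i$, because it controls only the tangent planes of $\mu_i$. Moreover, the shell $\{u\in\Tan(\Gamma,x)^\perp : n_K\le\langle u,u\rangle\le 1\}$ is unbounded in the Euclidean norm. So you have neither the Allard hypothesis nor weak-$*$ compactness of the $\nu_i$. Proving the conormal property, or carrying out your blow-up alternative, would require a boundary monotonicity formula and a classification of boundary tangent cones in $\R^{n,m}$. Neither is available, so this is a substantial missing piece.

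The paper does not use tangent cones for positivity. It works directly with the weak-$*$ limit: the set $\{u\in\Tan(\Gamma,x)^\perp:\langle u,u\rangle\ge n_K\}$ has two closed convex components, Lusin's theorem fixes a component along the sequence, and Mazur's lemma finishes. This is the same convexity you invoke, used only for the lower bound, where it is valid.

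\textbf{Two smaller points.}
\begin{enumerate}[(a)]
\item Step 1 is not a mere rewriting. $S(P)^{-1}\Div^E_P X$ and $\Div_P X$ are different linear functionals of $DX$: for smooth $M$, $H$ and the Euclidean mean curvature are traces of the second fundamental form with respect to two different metrics on $\Tan M$. Hence $d_K$ does not by itself control $\|\delta^E V_i\|(K)$. The paper makes the same reduction, so on this route that step needs its own argument.
\item For (i), Brakke's perpendicularity theorem concerns the Euclidean mean curvature of $V$, which is not pointwise related to the pseudo-Euclidean $H$. Instead, pass the hypothesis $H_i\perp\Tan(\mu_i,\cdot)$ to the limit, as the paper does via White's results. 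Test the $\mathfrak{X}_n(U)$-convergence with $X(x,P)=\Pi_P Y(x)$, cut off in $P$ outside the compact set of planes permitted by $g_K$.
\end{enumerate}
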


This result is the spacelike analogue of the classical compactness and closure theory for integral varifolds, with additional hypotheses that prevent collapse toward the light cone.

We then define spacelike Brakke flows with fixed boundary.

\begin{definition}
	An $n$-dimensional spacelike Brakke flow with boundary in an open subset $U$ of $\Rnm$ is a pair $((\mu_t)_{t \in I},\Gamma)$ where $\Gamma$ is a smooth, properly embedded $(n-1)$-dimensional spacelike submanifold of $U$ and where $(\mu_t)_{t \in I}$ is a family of Radon measures such that:
	\begin{enumerate}[(1)]
		\item For almost every $t \in I$, $\mu_t \in S\mathscr{V}_n(U,\Gamma)$. 
		\item If $[a,b] \subset I$ and if $K \subset U$ is compact, then 
		\begin{equation*}
			\int_a^b \int_K \left( 1 - \lnorm{H} \right)\,d\mu_t dt < \infty. 
		\end{equation*}
		\item If $[a,b] \subset I$ and $\phi \in C_c^2(U \times [a,b];\mathbf{R}^{+})$, then
		\begin{align*}
			\mu_b(\phi) - \mu_a(\phi)
			&\geq \int_a^b\int \left(-\phi \lnorm{H} + \esc{\nabla \phi}{H} + \frac{\partial \phi}{\partial t}\right) \, d\mu_t dt.
		\end{align*}
	\end{enumerate}
\end{definition}
 
 This definition agrees with the smooth spacelike mean curvature flow when the latter exists.

As a basic consequence of the definition, we prove a spacelike version of a standard monotonicity property used in Brakke's compactness theory. For each $\phi \in C_c^2(U;\R^+)$ such that  
\[
K(v^2,[a,b]) := \sup_{t \in [a,b]}\int_{\spt\phi}v^2 \,d\mu_t < \infty,
\]
the function
\[
t \in [a,b] \mapsto \mu_t(\phi) + ct
\]
is monotone increasing for a suitable constant $c$ depending on $\phi$ and $K(v^2,[a,b])$. In particular, under the corresponding global assumption, the total mass is nondecreasing along the flow. This reflects the causal character of the mean curvature term and contrasts with the usual Riemannian Brakke flow.

The second main result is a compactness theorem for spacelike Brakke flows with fixed boundary. 

\begin{theorem}
	For each $i \in \mathbf{N}$, let $t \in [a,b] \mapsto \mu_t^i$ be an $n$-dimensional spacelike Brakke flow in $U$ with boundary $\Gamma$. Suppose
	\begin{equation*} 
		c_K = \sup_i \sup_{t \in [a,b]} \mu_t^i(K) < \infty,
	\end{equation*}
	\begin{equation*} 
		g_K = \sup_i \sup_{t \in [a,b]} \sup \{ v_i^2(x,t) : x \in K\} < \infty, 
	\end{equation*}
	and
	\[ 
	n_K =  \inf_i \inf_{t \in [a,b]} \inf \{\lnorm{\nu_i(x,t)} : x \in K \cap \Gamma, \, \nu_i(x,t) \neq 0 \}  > 0 
	\]
	for every $K \Subset U$. Then, after passing to a subsequence, $\mu_t^i$ converges to a Radon measure $\mu_t$ for every $t \in [a,b]$, and $t \in [a,b] \mapsto \mu_t$ is an $n$-dimensional spacelike Brakke flow in $U$ with boundary $\Gamma$
\end{theorem}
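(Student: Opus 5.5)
We follow Ilmanen's scheme for Brakke's compactness theorem, adapted to the reversed sign of the Brakke inequality, and use the closure theorem above to identify the limit.

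\textbf{Step 1: a monotone correction and convergence at every time.} By the monotonicity property established right after the definition, for each $\phi\in C^2_c(U;\R^+)$ the function $t\mapsto \mu^i_t(\phi)+c\,t$ is nondecreasing on $[a,b]$. The constant $c$ depends only on $\phi$ and on $\sup_t\int_{\spt\phi}v^2\,d\mu^i_t\le g_K c_K$ with $K=\spt\phi$, hence is uniform in $i$. Fix a countable family $\{\phi_k\}$ dense in $C_c(U;\R^+)$ with $\phi_k\in C^2_c$. Since $\sup_i\sup_t\mu^i_t(\spt\phi_k)\le c_K$, Helly's selection theorem and a diagonal argument give a subsequence along which $\mu^i_t(\phi_k)$ converges for every $t\in[a,b]$ and every $k$. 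Using the uniform mass bound $c_K$ and density, we conclude that $\mu^i_t\rightharpoonup\mu_t$ for every $t$, with $\mu_t$ a Radon measure.

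\textbf{Step 2: membership in $S\mathscr{V}_n(U,\Gamma)$ for a.e.\ $t$.} We first get a curvature bound. Apply Brakke's inequality (3) with a time-independent cutoff $\phi\ge\chi_K$, and absorb $\esc{\nabla\phi}{H}$ by Cauchy--Schwarz on the spacelike normal part. Since $-\lnorm{H}\ge0$ and $\nabla\phi$ is controlled by $v^2$, this gives
\[
\int_a^b\int_K-\lnorm{H_i}\,d\mu^i_t\,dt\le C(K,c_K,g_K)
\]
uniformly in $i$. By Fatou, $\liminf_i\int_K-\lnorm{H_i}\,d\mu^i_t<\infty$ for a.e.\ $t$. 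For such $t$, pass to a further $t$-dependent subsequence realizing this liminf. Along it the hypotheses $d_K,g_K,n_K$ of the closure theorem hold, with $g_K$ and $n_K$ inherited pointwise in time, so $\mu_t\in S\mathscr{V}_n(U,\Gamma)$. The whole sequence already converges to $\mu_t$ by Step 1, so the limit does not depend on the subsequence. Lower semicontinuity in the closure theorem also gives $\int_K-\lnorm{H}\,d\mu_t\le\liminf_i\int_K-\lnorm{H_i}\,d\mu^i_t$. Combined with Fatou in time, this proves condition (2).

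\textbf{Step 3: passing to the limit in the Brakke inequality (the main obstacle).} The left side $\mu^i_b(\phi)-\mu^i_a(\phi)$ converges by Step 1. We need
\[
\liminf_i\int_a^b\!\!\int\bigl(-\phi\lnorm{H_i}+\esc{\nabla\phi}{H_i}+\partial_t\phi\bigr)\,d\mu^i_t\,dt\ \ge\ \text{the same expression for }\mu_t .
\]
The $\partial_t\phi$ term converges by dominated convergence in $t$. For the other two terms, first try the following: for a.e.\ $t$ with finite liminf, view $-\phi\lnorm{H_i}\mu^i_t$ as the quadratic part and $\esc{\nabla\phi}{H_i}\mu^i_t$ as the linear part. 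Via the first variation and the closure theorem, the linear part converges weakly to $\esc{\nabla\phi}{H}\mu_t$. This holds because $\nabla\phi$ is a fixed test field and the boundary terms $\int_\Gamma\esc{\nu_i}{X}$ pass to the limit. The quadratic part is lower semicontinuous, as in the Riemannian theory, by writing $-\phi\lnorm{H}=\sup_X\bigl(2\esc{X}{H}+\phi^{-1}\lnorm{X}\bigr)$ over normal-type fields. The delicate point is that $\lnorm{\cdot}$ is indefinite. The semicontinuity must therefore use that $H_i$ lies in the spacelike-plane normal, which is negative definite, with uniform constants coming from $g_K$; this restricts the supremum to timelike test fields in a compact family of normal Grassmannians. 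The uniform $g_K$ bound is exactly what makes this duality argument work.

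Finally, apply Fatou in $t$, which is legitimate because the integrand is bounded below by $-C\,|\nabla\phi|^2/\phi$ times mass via the same absorption as in Step 2. This yields inequality (3) for $\mu_t$ and completes the proof.
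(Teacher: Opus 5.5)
Your proposal is correct and follows essentially the same route as the paper. It applies Helly's theorem to the monotone functions $t\mapsto\mu^i_t(\phi)+c(\phi)t$ from Theorem~\ref{lemma:monotonicity}, uses Fatou in time plus the closure theorem along a $t$-dependent subsequence to get $\mu_t\in S\mathscr{V}_n(U,\Gamma)$ for a.e.\ $t$, and then applies Fatou again with the Young-type lower bound, convergence of the linear term $\int\esc{\nabla\varphi}{H_i}\,d\mu^i_t$, and lower semicontinuity of $\int-\varphi\lnorm{H}\,d\mu_t$ by duality over normal test fields. The differences are minor: you check condition (2) of the definition explicitly, which the paper leaves implicit, and you write the duality in Fenchel form with plane-dependent normal fields instead of the normalized sup in \eqref{eq:NormDuality}; just make sure, as the paper does with $\lambda(t)$, that the subsequence in Step 3 is chosen to realize the $\liminf$ of the combined integrand rather than of the curvature term alone.
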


The third main result is an existence theorem for spacelike Brakke flow by means of Ilmanen's elliptic regularization procedure. 

\begin{theorem} Let $\Gamma$ be a smooth, properly embedded, $(n-1)$-dimensional, spacelike submanifold of $\Rnm$, and let $M_0$ be a spacelike $n$-rectifiable subset of $\Rnm$. Suppose that
	\begin{enumerate}[(1)]
		\item $\partial M_0 = \Gamma$.
		\item $\sup_{x \in K} v^2(M_0,x) < \infty$, for every $K \Subset \Rnm$. 
		\item $M_0 = \opn{graph}(u_{M_0})$ for some Lipschitz function $u_{M_0} : \Omega \Subset \R^n \to \R^m$.
	\end{enumerate} 
	Then there exists an $n$-dimensional spacelike Brakke flow
	\[
	t \in [0,\infty) \mapsto \mu_t
	\]
	with boundary $\Gamma$ such that 
	\[
	\mu_t \rightharpoonup \sigma^n \resmes M_0
	\]
	as $t \to 0$. 
\end{theorem}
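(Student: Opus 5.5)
We follow Ilmanen's elliptic regularization \cite{Ilmanen94}, adapted to the indefinite metric. We work in $\Rnm\times\R$ with the product metric, the extra direction $z$ being spacelike, so that $\Rnm\times\R\cong\mathbf{R}^{n+1,m}$. For each $\ve>0$ we solve a Plateau-type problem for the weighted functional
\[
I^\ve(N)=\frac1\ve\int_N e^{-z/\ve}\,d\sigma^{n+1}
\]
among spacelike $(n+1)$-dimensional graphs over $\Omega\times(0,\infty)$ with boundary $(M_0\times\{0\})\cup(\Gamma\times[0,\infty))$. Hypothesis (3) lets us work in graphical form: $N=\opn{graph}(w)$ with $w:\Omega\times(0,\infty)\to\R^m$, $|Dw|<1$ in the appropriate sense. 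Hypothesis (2) controls $v^2$ for the boundary data.

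In the spacelike setting area is \emph{maximized}: $-\lnorm{H}\geq0$ and the second variation has the opposite sign. So we look for a critical point by maximizing $I^\ve$, or equivalently by solving the associated quasilinear Dirichlet problem directly. For $m=1$ this is a uniformly elliptic equation once a gradient bound $v\leq C$ is established. Then:
\begin{enumerate}[(i)]
\item Derive the Euler--Lagrange equation $H_{N^\ve}=-\frac1\ve e_z^{\perp}$ in $\mathbf{R}^{n+1,m}$.
\item Observe that the translates $N^\ve_t:=N^\ve-\frac t\ve e_z$ then move by spacelike mean curvature flow (translating solitons).
\item Slice: $\mu^\ve_t$ is the measure obtained from $N^\ve_t$ by pushing forward $\sigma^{n+1}\resmes N^\ve_t$ restricted to a unit $z$-slab onto $\Rnm$ (Ilmanen's averaging).
\end{enumerate}
Each $t\mapsto\mu^\ve_t$ is a smooth spacelike flow with boundary $\Gamma$ (up to $\ve$-errors from the slab ends). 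The smooth computation in the introduction therefore gives the Brakke inequality with equality. Its boundary term $\int_\Gamma\esc{\nu}{\nabla\phi}$ vanishes because $\Gamma$ is fixed and $\phi$ is tested with normal velocity. More precisely, we verify $\mu^\ve_t\in S\mathscr{V}_n(U,\Gamma)$ with conormal $\nu^\ve$ given by the genuine conormal of a smooth spacelike manifold with boundary, so $0<\lnorm{\nu^\ve}\leq1$.

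We then pass to the limit $\ve\to0$ using the compactness theorem for spacelike Brakke flows with boundary (second main result). This requires, uniformly in $\ve$ and on compact sets, three bounds:
\begin{enumerate}[(a)]
\item a mass bound $c_K$;
\item a gradient bound $g_K$;
\item a lower bound $n_K$ for $\lnorm{\nu}$ on $\Gamma$.
\end{enumerate}
Mass bounds come from the $I^\ve$ comparison with the competitor $M_0\times[0,\infty)$. Since $v^2$ is bounded on compacts, spacelike area is comparable to Euclidean area, and Euclidean area of graphs over $\Omega\times(0,\infty)$ in a slab is controlled. Initial attainment $\mu_t\rightharpoonup\sigma^n\resmes M_0$ follows as in Ilmanen. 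Near $z=0$ the surface $N^\ve$ is close to $M_0\times\R$, because the energy identity bounds $\int_0^t\!\int -\lnorm{H}\,d\mu_s\,ds$ by the mass gain. Combined with the monotonicity of $t\mapsto\mu_t(\phi)+ct$, this gives continuity at $t=0$ in both directions.

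The main obstacle is (b) together with (c): uniform interior and boundary gradient estimates keeping $N^\ve$ uniformly away from the light cone, independent of $\ve$. Our approach is to adapt the Bartnik/Ecker--Huisken type estimates \cite{EH91, Ecker97, LambertLotay21}. In the interior we apply the maximum principle to the evolution of $v$ along the translating flow, which satisfies a good differential inequality in $\mathbf{R}^{n+1,m}$ because the normal bundle is timelike. At the boundary we build barriers from the spacelike data $(M_0\times\{0\})\cup(\Gamma\times[0,\infty))$. Uniform spacelikeness of the data bounds the boundary gradient, and the conormal bound $n_K$ then follows from the gradient bound on $\Gamma$. For $m>1$ the existence of the elliptic solution itself also rests on these estimates, via a continuity method.
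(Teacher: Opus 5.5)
\textbf{Main gap: step (iii), the reduction to dimension $n$.} For fixed $\ve$, $N^\ve_t\cap\{0<z<1\}$ is an $(n+1)$-dimensional graph $\{(x,w(x,z),z)\}$, and in general $\partial_z w\neq 0$. So the push-forward of $\sigma^{n+1}\resmes(N^\ve_t\cap\{0<z<1\})$ to $\Rnm$ is carried by the $(n+1)$-dimensional set $\{(x,w(x,z))\}$. Equivalently, it is a superposition of the different $n$-dimensional graphs $x\mapsto w(x,z)$, $z\in(0,1)$. Such a measure is not in $SI\mcal{M}_n$, let alone $S\mathscr{V}_n(U,\Gamma)$. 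It is not a smooth $n$-dimensional flow ``up to $\ve$-errors'', and the compactness theorem has no input.

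The reduction must happen \emph{after} the limit, and the missing idea is translation invariance of the limit. The paper applies compactness to the $(n+1)$-dimensional translating flows $\tilde\mu^\ve_t$ themselves, which are spacelike Brakke flows in $\Rnm\times(0,\infty)$ with boundary $\Gamma\times(0,\infty)$. It then combines $\tilde\mu^\ve_t(\phi^\tau)=\tilde\mu^\ve_{t+\ve\tau}(\phi)$, where $\phi^\tau(x,z)=\phi(x,z-\tau)$, with monotonicity of $t\mapsto\tilde\mu^\ve_t(\phi)+c(\phi)t$. The constant $c(\phi)$ is independent of $\ve$ by the uniform mass and $v^2$ bounds. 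This gives $\tilde\mu_t(\phi)\le\tilde\mu_t(\phi^\tau)\le\lim_{s\searrow t}\tilde\mu_s(\phi)$, hence $\tilde\mu_t=\mu_t\times\mathcal{L}^1$ for all but countably many $t$. Only then does Ilmanen's product lemma, $\mu_t(\phi)=\tilde\mu_t(\phi\theta)$ with $\int\theta=1$, yield an $n$-dimensional spacelike Brakke flow with boundary $\Gamma$.

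\textbf{Initial condition and mass bounds.} Monotonicity of $t\mapsto\mu_t(\phi)+ct$ only excludes a sudden \emph{loss} of mass. A spacelike Brakke flow may jump \emph{up}, so monotonicity cannot give ``both directions''. Your near-verticality estimate near $z=0$ is the paper's slice identity, but it gives closeness of supports, not of measures. The paper supplies two further ingredients:
\begin{itemize}
\item the slab lower bound $\opn{area}(\Sigma^\ve\cap\{a<z<b\})\geq(b-a+\ve)\opn{area}(M_0)$, from Ilmanen's slice monotonicity;
\item upper semicontinuity of spacelike area under $C^0$ convergence of uniformly spacelike graphs, which comes from concavity of the area functional.
\end{itemize}
Relatedly, comparing with $M_0\times[0,\infty)$ in a \emph{maximization} gives the lower bound $I^\ve(N^\ve)\geq\opn{area}(M_0)$, not a mass upper bound. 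The upper bound is free: a spacelike graph over $D$ has area at most $|D|$.

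\textbf{Gradient estimates (b)--(c): a genuinely different route.} The paper builds $S(\Tan(\Sigma,\cdot))\geq s_K$ into the admissible class $\mcal{G}$. The uniform gradient bound then holds by construction and $\lnorm{\nu}=1$ is read off. The price is that the paper must argue a maximizer of a \emph{constrained} problem is stationary. Your unconstrained problem makes the Euler--Lagrange equation automatic. It instead requires $\ve$-independent interior and boundary gradient estimates for a translator with $H=-\ve^{-1}e_z^\perp$, whose mean curvature is of order $\ve^{-1}$. The bottom data is only Lipschitz, and there is a corner along $\Gamma\times\{0\}$. You leave this as a program. Bartnik--Simon type boundary barriers depend on a bound for the mean curvature, so as they stand they are not uniform in $\ve$.
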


Finally, we prove that spacelike Brakke flows with boundary satisfy a pseudo-Euclidean Huisken monotonicity formula under mild hypothesis, which allows us to adapt White's local regularity theorem to the pseudo-Euclidean setting. Some of this results can be summarized as follows:

\begin{quote}
	A spacelike Brakke flow with boundary satisfy a pseudo-Euclidean Huisken monotonicity formula (Theorem \ref{th:Monotonicity}), admits tangent flows at every spacetime point (Proposition \ref{prop:TangentFlows}) and obeys local regularity theorems in the interior and at the boundary (Theorem \ref{th:LocalRegularityII}), under the corresponding smoothness, flat-density and uniform-spacelikeness assumptions. 
\end{quote}

The organization of the paper is the following. In Section 2 we fix notation and recall the basic semi-Riemannian conventions used throughout the paper. In Section 3 we introduce spacelike rectifiable measures, spacelike varifolds, the gradient function, and the first variation of a spacelike varifold. We prove the spacelike varifold closure theorem in Section 3.4. Later, in Section 4 we define spacelike Brakke flows with boundary, establish the basic monotonicity property, and prove the compactness theorem for spacelike Brakke flows. In Section 5, we adapt Ilmanen’s elliptic regularization to prove existence of spacelike Brakke flows. Finally, in Section 8 we present a pseudo-Euclidean Huisken monotonicity formula, which will be fundamental for adapting White's local regularity theorem to the pseudo-Euclidean case in the last Section. 

\medskip
\noindent \textbf{Acknowledgments}. I would like to express my gratitude to my advisors Francisco Martín and Miguel Sánchez. I am also profoundly thankful to Brian White for their thoughtful suggestions and corrections.

\section{Preliminaries}

Let $\R^{n,m} = (\R^{n+m}, \lnorm{\cdot})$ be the pseudo-Euclidean space, where
\[
\esc{u_1}{u_2} = \sum_{i=1}^n u_1^i u_2^i - \sum_{i=n+1}^{n+m} u_1^iu_2^i.
\]
for any $u_1,u_2 \in \Rnm$. We recall that a vector $u \in \Rnm$ is called:
\begin{itemize}
	\item spacelike if $\lnorm{u} > 0$ or $u = 0$,
	\item null if $\lnorm{u} = 0$ and  $u \neq 0$,
	\item timelike if $\lnorm{u} < 0$.
\end{itemize}
If $u \in \Rnm$ is either spacelike or null, we let
\[
\abs{u} = \sqrt{\lnorm{u}} \geq 0.
\]

We want to generalize the theory of varifolds in the semi-Riemannian setting, in order to take limits of spacelike submanifolds under weak curvature bounds. 

For an open subset $U$ of $\Rnm$, we need first to consider the product space defined by 
\[
G_n(U) \coloneqq U \times G(n,m),
\]
where $G(n,m)$ denotes the set of $n$-dimensional spacelike planes of $\Rnm$, which is noncompact. We let $\mathfrak{X}(U)$ denote the space of continuous, compactly supported vectorfields on $U$, and $\mathfrak{X}_n(U)$ denote the space of continuous, compactly supported functions on $G_n(U)$ that assign to each $(x,S)$ in $G_n(U)$ a vector in $\Tan(U,x)$.

We say that $V$ is a spacelike $n$-varifold if $V$ is a positive Radon measure on $G_n(U)$. We typically do not consider this completely general and abstract spacelike $n$-varifold, though it is important when we consider a convergence in this topology. The most important example of a spacelike $n$-varifold is the one naturally induced from a smooth, properly embedded, $n$-dimensional, spacelike submanifold $M$ of $U$. Due to Riesz representation theorem \cite[Theorem 4.1]{Simon83}, a Radon measure is uniquely determined once we give a locally bounded linear map $L \colon C_c(G_n(U)) \rightarrow \R$. Given $\phi \in C_c(G_n(U))$, whose independent variables are $x \in U$ and $S \in G(n,m)$, the most natural thing to compute is
\[
\int_M \phi(x,\Tan(M,x))\, d\mu_M(x),
\]
where $\mu_M$ is the volume measure on $M$. This assignment for $\phi \in C_c(G_n(U))$ is linear and, for each pair of compact sets $K \subset U$ and $\mathcal{T} \subset G(n,m)$, and for all $\phi$ with $\spt \phi \subset K \times \mathcal{T}$,
\[
\int_M \phi(x,\Tan(M,x)) \, d\mu_M(x) \leq \operatorname{area}(M \cap K)\sup_{x \in K,\, S \in \mathcal{T}}\abs{\phi(x,S)}
\]
so it defines a locally bounded linear functional. Therefore, we define the Radon measure $\abs{M}$ on $G_n(U)$ by the rule
\[
\int_{G_n(U)} \phi(x,S) \, d\abs{M}(x,S) \coloneqq \int_M \phi(x,\Tan(M,x)) \, d\mu_M(x)
\]
for all $\phi \in C_c(G_n(U))$.

There are many reasons to consider this abstract setting. Suppose that we have a sequence of spacelike $n$-dimensional submanifolds $M_i$ which have locally uniformly finite areas, that is, $\sup_i \opn{area}(M_i \cap K) < \infty$ for any $K \Subset U$. Each $M_i$ determines a Radon measure, defined by $\mu_i(A) = \opn{area}(M_i \cap A)$ for any Borel subset A of $U$, and the local area bound implies that $(\mu_i)$ is locally uniformly bounded. Then, by the compactness theorem of Radon measures (see \cite[Theorem 4.4]{Simon83}), there exists a subsequence $\mu_{i(j)}$ and a limit Radon measure $\mu$ on $U$ such that 
\[
\int_U \phi(x)\,d\mu_{i(j)}(x) \rightarrow \int_U \phi(x)\,d\mu(x)
\]
for all $\phi \in C_c(U)$. Moreover, suppose that the spacelike $n$-dimensional submanifolds are locally uniformly spacelike, that is, for any compact subset $K \subset U$, there exists a compact subset $\mathcal{T} \subset G(n,m)$ such that $\Tan(M_i,x) \in \mathcal{T}$ for all $i$ and $x \in K$. If we consider the sequence of spacelike $n$-varifolds $\abs{M_i}$ instead, the sequence also has locally uniformly finite measure on $G_n(U)$. By the compactness theorem of Radon measures again, there exists a
converging subsequence and a limit spacelike $n$-varifold $V$ such that
\[
\int_{G_n(U)} \phi(x,S)\,d\abs{M_{i(j)}}(x,S) \rightarrow \int_{G_n(U)} \phi(x,S)\,dV(x,S)
\]
for all $\phi \in C_c(G_n(U))$. From this, we see that $V$ contains more information than $\mu$ regarding the behavior of tangent planes of $\abs{M_i}$. We will see more useful properties in Section \ref{Section:SpacelikeVarifolds}. 

\section{Spacelike varifolds}\label{Section:SpacelikeVarifolds}

\subsection{Geometric measure theory}

We denote by $\mathcal{H}^n$ the euclidean $n$-dimensional Hausdorff measure in $\R^{n+m}$. Let $U$ be an open subset of $\R^{n+m}$ and let $M$ be a $\mathcal{H}^n$-measurable subset of $U$. We say that $M$ is countable $n$-rectifiable (``$n$-rectifiable'' for short) if $\mathcal{H}^n$-almost all of the set $M$ can be covered by a countable union of $n$-dimensional $C^1$ submanifolds of $U$. Therefore, an $n$-rectifiable set admits tangent space $\mathcal{H}^n$-almost everywhere. 

We let $\Tan(M,x)$ be the tangent space to $M$ at $x$ (where it is defined). The set $M$ is called spacelike if $\Tan(M,x)$ is spacelike for all $x \in M$ where $\Tan(M,x)$ exists (in particular, $\mathcal{H}^n$-almost everywhere on $M$). The set $M$ is called non-timelike if $\Tan(M,x)$ is either spacelike or null. 

Suppose that $M$ is a spacelike $n$-rectifiable subset of $U$, and let $\phi \in C_c^1(U)$ and $X \in C_c^1(U;\Rnm)$. We denote by $\nabla^M \phi$ the semi-Riemannian tangential gradient of $\phi$ on $M$, defined as 
\[
\nabla^M \phi \coloneq \Pi_{\Tan(M,\cdot)}\nabla\phi,
\]
where $\nabla\phi$ is the ambient gradient of $\phi$ and $\Pi_{\Tan(M,\cdot)}$ denotes the semi-Riemannian orthogonal projection onto $\Tan(M, \cdot)$. We define the semi-Riemannian tangential divergence $\Div_M X$ on $M$ as
\[ 
\Div_M X \coloneqq \sum_{i=1}^n \esc{\tau_i}{\nabla_{\tau_i}X}
\]
where $\{\tau_1,\dots,\tau_n\}$ is an orthonormal base of $\Tan(M,\cdot)$. The tangential divergence verifies
\[ 
\Div_M(\phi X) = \phi \Div_M X + \esc{\nabla^M\phi}{X}.
\]

The $n$-dimensional area $\sigma^n$ of a spacelike $n$-rectifiable subset $M = X(\Omega)$ of $\Rnm$, where $\Omega \subset \R^n$ is an open subset and $X : \Omega \to \Rnm$ is a Lipschitz embedding, is given by 
\[
\sigma^n(M) = \int_\Omega \sqrt{\det g_S|\Tan(M,\cdot)},
\]
where $g_S|\Tan(M,\cdot)$ is the  canonical pseudo-Euclidean metric of $\Rnm$ restricted to $\Tan(M,\cdot)$. We are interested in the relation between the $n$-dimensional measure $\sigma^n$ associated with a spacelike $n$-rectifiable subset $M$ of $\Rnm$ and its Hausdorff $n$-dimensional measure $\H^n$ on $\R^{n+m}$. Let $g_E$ denote the canonical euclidean metric of $\R^{n+m}$. Using the Euclidean area formula \cite[Page 125]{EvansGariepy92}, we have 
\[
\sigma^n(M) = \int_{M \cap A} \frac{\sqrt{\det g_S|\Tan(M,\cdot)}}{\sqrt{\det g_E|\Tan(M,\cdot)}}\, d\H^n.
\]
Let $x \in M$ such that $\Tan(M,x)$ exists and let $\{\tau_1,\dots,\tau_n\}$ be a $g_S$-othonormal base of $\Tan(M,x)$. Therefore, 
\[ 
\sqrt{\det g_S|\Tan(M,x)} = 1. 
\]
Denote by $\mcal{B}_{\Rnm} = \{e_1,\dots,e_n,e_{n+1},\dots,e_{n+m}\}$ the canonical base of $\Rnm$, where $e_i$ is spacelike for $i=1,\dots,n$ and timelike for $i=n+1,\dots, n+m$. Every $0 \neq u \in \Tan(M,x)$ can be written as 
\[ 
u = \sum_{i=1}^n a_ie_i + \sum_{i=n+1}^{n+m} b_ie_i 
= u_s + u_t. 
\] 
Using the definition of both metrics, we compute the Rayleigh quotient,
\begin{align*}
	\frac{g_E(u,u)}{g_S(u,u)} = \frac{\abs{u_s}^2_{\R^{n+m}} + \abs{u_t}^2_{\R^{n+m}}}{\abs{u_s}^2_{\R^{n+m}} - \abs{u_t}^2_{\R^{n+m}}} = 1 + \frac{2\abs{u_t}^2_{\R^{n+m}}}{\abs{u_s}^2_{\R^{n+m}} - \abs{u_t}^2_{\R^{n+m}}} \geq 1
\end{align*}
Therefore, the eigenvalues of the matrix $g_E(\tau_i,\tau_j)$ verify $\lambda_i \geq 1$ for every $i=1,\dots,n$ and, consequently,
\[
S(\Tan(M,x)) := \frac{\sqrt{\det g_S|\Tan(M,x)}}{\sqrt{\det g_E|\Tan(M,x)}} = \frac{1}{\sqrt{\lambda_1 \cdots \lambda_n}} \in (0,1]. 
\] 

Since $\Tan(M,x)$ exists for $\H^n$ almost every $x \in \Rnm$, we conclude 
\[ 
d\sigma^n = S(\Tan(M,\cdot))\, d\H^n,
\quad 0 < S(\Tan(M,\cdot)) \leq 1.
\]
Therefore, the $n$-dimensional spacelike area measure $\sigma^n$ is absolutely continuous with respect to $\H^n$, that is, $\sigma^n(A) = 0$ for every Borel subset $A$ such that $\H^n(A)=0$. In fact, we have that $\sigma^n \leq \H^n$. 

Following the smooth case \cite[Section 2.2]{LambertLotay21}, we introduce the gradient function. If $P \in G(n,m)$ is a spacelike $n$-dimensional plane, we define the gradient function as
\[ 
v^2(P) = \sum_{i=n+1}^{n+m}-\lnorm{e_i^\perp} = m + \sum_{i=n+1}^{n+m}\lnorm{e_i^\top} \geq m,
\]
where $e_i^\perp$ and $e_i^\top$ are the semi-Riemannian orthogonal projections of $e_i$ onto $P^\perp$ and $P$ respectively. If $M$ is a spacelike $n$-rectifiable subset of $\Rnm$, let
\[ 
v^2(M,x) := v^2(\Tan(M,x)).  
\]  
where $\Tan(M,x)$ exists, and let $v^2(M,x) \coloneqq m$ where $\Tan(M,x)$ does not exist. 

A uniform bound for the gradient function $v^2(P)$ implies a positive lower bound for $S(P)$ on compact subsets of the spacelike Grassmanian. Let $P$ be an $n$-dimensional spacelike plane and $\{\tau_1,\dots,\tau_n\}$ an orthonormal base of $P$. Again, every $u \in P$, with $\lnorm{u}=1$, can be written as 
\[ 
u = \sum_{i=1}^n \esc{u}{e_i}e_i - \sum_{i=n+1}^{n+m} \esc{u}{e_i}e_i = u_s + u_t.
\] 
Since
\[ \esc{u}{e_i}^2 \leq \lnorm{e^\top_i} = -1 + \lnorm{e^\perp_i}, 
\]
Adding for $i \in \{n+1,\dots,n+m\}$, we obtain  
\[ \abs{u_t}^2_{\R^{n+m}} \leq -m + v^2(P) 
\]
and
\[ \frac{\abs{u_t}^2_{\R^{n+m}}}{\abs{u_s}^2_{\R^{n+m}}} = \frac{\abs{u_t}^2_{\R^{n+m}}}{1 + \abs{u_t}^2_{\R^{n+m}}} \leq 1 - \frac{1}{1-m+v^2(P)} < 1.
\]
Using the definition of both metrics, with $g_S(\cdot,\cdot) = \lnorm{\cdot}$, and taking into account that the function 
\[ 
x \in [0,1) \mapsto \frac{1 + x}{1 - x}
\] 
is increasing,  
we can estimate the Rayleigh quotient by 
\[
\frac{g_E(u,u)}{g_S(u,u)} \leq 1 - 2m + 2v^2(P). 
\]
Therefore, the eigenvalues of the matrix $g_E(\tau_i,\tau_j)$ verify $\lambda_i \leq 1 - 2m + 2v^2(P)$ for every $i=1,\dots,n$ and, consequently,
\[
S(P) := \frac{\sqrt{\det g_S|P}}{\sqrt{\det g_E|P}} = \frac{1}{\sqrt{\lambda_1 \cdots \lambda_n}} \geq \frac{1}{(1 - 2m + 2v^2(P))^{n/2}}.
\]

The gradient also plays a vital role in the pointwise estimates of ambient tensors. Suppose $T$ is a $(r+s)$-covariant tensor on $\Rnm$. Defining 
\[ 
T_{i_1,\dots,i_r,j_1,\dots,j_s} = T(\tau_{i_1},\dots,\tau_{i_r},\xi_{j_1},\dots,\xi_{j_s}), 
\]
where $\{\tau_{i_1},\dots,\tau_{i_r}\}$ and $\{\xi_{j_1},\dots,\xi_{j_s}\}$ are orthonormal subsets of $\Tan(M,\cdot)$ and $\Tan(M,\cdot)^\perp$ respectively, we have that 
\begin{equation}\label{eq:GradientEstimateI}
	\abs{T_{i_1,\dots,i_r,j_1,\dots,j_s}} \leq v^{r+s}\abs{T}_{\R^{n+m}}. 
\end{equation}
by \cite[Equation 2.3]{LambertLotay21}.

\subsection{Spacelike integer varifolds}

The generalized manifolds we consider in this paper are the \textit{Spacelike integer varifolds}. In this section we will focus on defining a weak notion of mean curvature and boundary. 

\begin{definition}[Spacelike integer Radon measures]
    We say that a Radon measure $\mu=\mu(M,\theta)$ on $U$ is spacelike integer $n$-rectifiable, and we write $\mu \in SI\mathcal{M}_n(U)$, if there exists a spacelike $n$-rectifiable subset $M$ of $U$ and a multiplicity function $\theta \in L^1_{\operatorname{loc}}(M,\sigma^n)$ that takes values in the nonnegative integers such that 
	\[
	\int f(x)\, d\mu(x)
	\coloneq \int_{M} \theta(x)f(x)\, d\sigma^n(x)
	\]
    for all $f \in C_c(U)$.  
\end{definition}

Suppose $\mu=\mu(M,\theta)$ is a spacelike integer $n$-rectifiable Radon measure on $U$. We let $\opn{Var}(\mu)$ be the associated \textit{spacelike integer $n$-rectifiable varifold} in $U$, that is, the Radon measure on $G_n(U)$ defined by 
\[
\int f(x,S)\, d\opn{Var}(\mu)(x,S)
\coloneq \int_M \theta(x)f(x,\Tan(M,x))\, d\sigma^n(x)
\]
for all $f \in C_c(G_n(U))$.

\subsection{First variation of a varifold}

In the semi-Riemannian setting we have the following divergence theorem for smooth, properly embedded, spacelike submanifolds with smooth boundary, which motivates the definition of the first variation of a varifold.

\begin{theorem}[Divergence Theorem]
    Let $M$ be an $n$-dimensional, spacelike, properly embedded, oriented submanifold of class $C^2$ of $U$ with spacelike boundary $\Gamma$. If $X \in C^1_c(U;\Rnm)$, then   \begin{equation}\label{Eq:DivergenceTheorem}
    \int_M \Div_M(X)\, d\sigma^n = -\int_M \esc{X}{H}\, d\sigma^n + \int_\Gamma \esc{X}{\eta}\, d\sigma^{n-1},
\end{equation}
where $H$ denotes the mean curvature vector of $M$ and $\eta$ denotes the outer unit normal field to $M$ along $\Gamma$, which is tangent to $M$ at all boundary points.
\end{theorem}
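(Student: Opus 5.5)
The plan is to reduce the statement to a pointwise identity on $M$ and then integrate. The pointwise identity is the tangential/normal splitting of the divergence. The integrated identity is the intrinsic divergence theorem on the Riemannian manifold with boundary $(M,g_S|_M)$. Since $M$ is spacelike, the induced metric is positive definite, so $M$ with the induced metric is an honest oriented $C^2$ Riemannian manifold with boundary $\Gamma$. Its Riemannian volume form is exactly $\sigma^n$ (the density $\sqrt{\det g_S|\Tan(M,\cdot)}$ in a chart), and the induced volume on $\Gamma$ is $\sigma^{n-1}$. Thus the classical Riemannian divergence theorem on $M$ is available: for a compactly supported $C^1$ tangent vector field $Y$ on $M$,
\[
\int_M \operatorname{div}^M Y \, d\sigma^n = \int_\Gamma \esc{Y}{\eta}\, d\sigma^{n-1},
\]
where $\eta$ is the outward unit conormal, tangent to $M$ and unit for $g_S$. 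Compact support of $X$ in $U$ and properness of the embedding guarantee that the restriction to $M$ is compactly supported.

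Next, split $X = X^\top + X^\perp$ along $M$, using the semi-Riemannian orthogonal projections. These are well defined because $\Tan(M,x)$ is nondegenerate, so $\Rnm = \Tan(M,x)\oplus \Tan(M,x)^\perp$. Both pieces are $C^1$ on $M$, since the projections depend $C^1$ on $x$ for a $C^2$ submanifold. Fix a local $g_S$-orthonormal tangent frame $\tau_1,\dots,\tau_n$. The Levi-Civita connection of the induced metric is the tangential projection of the flat ambient derivative, so
\[
\Div_M X^\top = \sum_i \esc{\tau_i}{\nabla_{\tau_i}X^\top} = \operatorname{div}^M X^\top .
\]
For the normal part, $\esc{\tau_i}{X^\perp}=0$, and differentiating along $\tau_i$ gives
\[
\esc{\tau_i}{\nabla_{\tau_i}X^\perp} = -\esc{\nabla_{\tau_i}\tau_i}{X^\perp} = -\esc{\mathrm{II}(\tau_i,\tau_i)}{X^\perp}.
\]
With the convention $H=\sum_i \mathrm{II}(\tau_i,\tau_i)$ (the one consistent with $\partial_t F = H$ and the first-variation sign in the introduction), this yields $\Div_M X^\perp = -\esc{H}{X^\perp} = -\esc{H}{X}$, because $H$ is normal. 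Adding the two pieces, applying the Riemannian divergence theorem to $Y=X^\top$, and using $\esc{X^\top}{\eta}=\esc{X}{\eta}$ (as $\eta$ is tangent) gives \eqref{Eq:DivergenceTheorem}.

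The only genuinely delicate step is the justification that the intrinsic objects coincide with the semi-Riemannian ones. There are three points to check. First, the Riemannian volume of the induced metric is $\sigma^n$. Second, the induced connection is the $g_S$-tangential projection of the flat connection; this is Gauss's formula, which holds for nondegenerate submanifolds of semi-Riemannian manifolds, as in O'Neill. Third, $\eta$ is well defined: $\Gamma$ spacelike of codimension one in the Riemannian manifold $M$ has a unique outward unit normal within $\Tan(M,\cdot)$. I would also note that $C^2$ regularity suffices: $\mathrm{II}$ is then continuous, the tangential field $X^\top$ is $C^1$, and the classical divergence theorem applies in this regularity, for instance via a partition of unity and Stokes' theorem for the $(n-1)$-form $\iota_{X^\top}d\sigma^n$. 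Orientability is used exactly to have this volume form globally.
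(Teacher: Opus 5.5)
Your proposal is correct and follows essentially the same route as the paper. You split $X=X^\top+X^\perp$, show that the normal part contributes $-\esc{X}{H}$ through the second fundamental form, and apply Stokes' theorem on $M$ to $X^\top$ to obtain the boundary term. The only difference is cosmetic: the paper expands $X^\perp=-\sum_j\esc{X}{\xi_j}\xi_j$ in a timelike orthonormal normal frame and uses $\Div_M\xi_j$, whereas you differentiate $\esc{\tau_i}{X^\perp}=0$ directly. You also make explicit the identifications the paper leaves implicit: the volume form, the Gauss formula, and the $C^2$ regularity.
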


\begin{proof}
    We can decompose the vector field $X \in C_c^1(U;\Rnm)$ as
    \[
    X = X^\top + X^\perp = X^\top - \sum_{j = 1}^m \esc{X}{\xi_j}\xi_j
    \]
    and
    \begin{align*}
    \Div_M X& =\Div_M X^\top -\sum_{j = 1}^m \Div_M(\esc{X}{\xi_j}\xi_j)\\
    &= \Div_M X^\top-\sum_{j = 1}^m \esc{X}{\xi_j} \Div_M(\xi_j)\\
    &= \Div_M X^\top-\sum_{j = 1}^m \esc{X}{\xi_j} \sum_{i=1}^n \esc{\tau_i}{\nabla_{\tau_i}\xi_j}\\
    &= \Div_M X^\top - \esc{X}{\sum_{i=1}^n II(\tau_i,\tau_i)}\\
    &= \Div_M X^\top - \esc{X}{H}.
    \end{align*}
    By the Stokes theorem on $M$,
    \begin{align*}
    \int_M \Div_MX\, d\sigma^n &= \int_M \Div_M X^\top\, d\sigma^n - \int_M \esc{X}{H}\, d\sigma^n\\
    &= \int_\Gamma \esc{X}{\eta}\, d\sigma^{n-1} - \int_M \esc{X}{H}\, d\sigma^n. 
    \end{align*} 
\end{proof}

Let $V$ be a Radon measure on $G_n(U)$, and $X \in C_c^1(U;\Rnm)$. Then, the function 
\[
(x,S) \in G_n(U) \mapsto \Div_S X,
\]
where
\[
\Div_S X = \sum_{i=1}^n \esc{\nabla_{\tau_i} X}{\tau_i}
\]
for $\{\tau_1,\dots,\tau_n\}$ an orthonormal base of $S$, is in $C_c(G_n(U))$. In particular, if $V = \Var(\mu)$ for some spacelike integer $n$-dimensional Radon measure $\mu = \mu(M,\theta)$, we write 
\[
\Div_\mu X = \sum_{i=1}^n \esc{\nabla_{\tau_i} X}{\tau_i}
\]
for $\{\tau_1,\dots,\tau_n\}$ an orthonormal base of $\Tan(M,x)$. In a similar way to the Riemannian case, we can give the following definition. 

\begin{definition}[First variation of a varifold]
	If $\mu$ is a spacelike integer $n$-rectifiable Radon measure on $U$, we define the first variation of $V=\opn{Var}(\mu)$ by 
	\[ 
	\delta V(X) := \int_{U} \Div_{\mu} X \,d\mu
	\]
	for all $X \in C_c^1(U;\Rnm)$. 
\end{definition} 

The following definition is also the same as in the Riemannian case. 

\begin{definition}
	If $\mu$ is a spacelike integer $n$-rectifiable Radon measure on $U$, we say that $V=\opn{Var}(\mu)$ is \textit{stationary} if 
    \[
    \delta V(X) = 0
    \]
    for all $X \in C_c^1(U;\Rnm)$. More generally, we say $V$ has \textit{bounded first variation} if there exists $c_K>0$ such that 
    \[
    \abs{\delta V(X)} \leq c_K \sup_K \abs{X}_{\R^{n+m}}
    \]
    for any $X \in C_c^1(U;\R^{k,n})$ with $\spt X \subset K \Subset U$.  
\end{definition}

\begin{remark} 
	Suppose $(\mu_i)_{i \in \mathbf{N}}$ and $\mu$ are spacelike integer $k$-rectifiable Radon measures in $U$ such that $\opn{Var}(\mu_i)$ converges to $\Var(\mu)$ as Radon measures. Then, 
    \[
    \delta \opn{Var}(\mu_i)(X) \rightarrow \delta \opn{Var}(\mu)(X)
    \]
    for all $X \in C_c^1(U;\Rnm)$. 
\end{remark} 

We would to define a weak notion of  mean curvature vector for a spacelike integer varifold with locally bounded first variation, that will be consistent with our semi-Riemannian ambient. In order to do this, we need to use the Hilbert space structure of $(\R^{n+m}, \lnorm{\cdot}_{\R^{n+m}})$, where $\lnorm{\cdot}_{\R^{n+m}}$ is the canonical metric of $\R^{n+m}$ and $\enorm{\cdot}$ its associated norm. 

Let $\mu$ be a spacelike integer $n$-rectifiable Radon measure in an open subset $U$ of $\Rnm$ with locally bounded first variation, and let $V=\opn{Var}(\mu)$. Approximating by $C_c^1(U;\Rnm)$ vector fields, we can extend the definition of $\delta V$ to $C_c(U;\Rnm)$. Since $\delta V$ is a locally bounded linear functional on $C_c(U;\Rnm)$, by Riesz representation theorem \cite[Theorem 4.1]{Simon83}, there exists a Radon measure $\abs{\delta V}$ and a $\abs{\delta V}$-measurable function $\tilde{\eta} : U \rightarrow \R^{n+m}$ such that $\abs{\tilde{\eta}(x)}_{\R^{n+m}} = 1$ for $\abs{\delta V}$-almost every $x \in U$ and 
\[
\delta V(X) = \int \esc{X}{\tilde{\eta}}_{\R^{n+m}} \, d\abs{\delta V}
\]
for all $X \in C_c(U;\Rnm)$. Moreover, $\tilde{\eta}$ is unique up to sets of $\abs{\delta V}$-measure zero. Now, we apply the Radon–Nikodym theorem \cite[Theorem 1.28]{AmbrosioFuscoPallara00} to the pair $\abs{\delta V}$ and $\mu$. The absolutely continuous part of $\mu$ is represented as a locally integrable and $\mu$-measurable function $f$. Writing the singular part as $\beta = \beta(\mu)$, we have
\[
d\abs{\delta V} = f d\mu + d\beta.
\]
Combining the Riesz representation theorem with Radon-Nikodym decomposition, we obtain
\[
	\delta V(X) 
	= \int f \esc{X}{\tilde{\eta}}_{\R^{n+m}} \, d\mu + \int \esc{X}{\tilde{\eta}}_{\R^{n+m}} \, d\beta.\\
\]
Let 
\[
    \xi = \opn{diag}(1,\dots,1-1,\dots,-1)
\]
be the semi-Riemannian metric tensor of $\Rnm$, and define
$H = -\xi f \tilde{\eta}$ and $\eta = \xi\tilde{\eta}$, then 
\[
	\delta V(X) = -\int \esc{X}{H} \, d\mu + \int \esc{X}{\eta} \, d\beta.
\]

We now consider a simple example to better understand the abstract concepts in the previous construction. 

\begin{example}
	In Lorentz-Minkowski space $\L^2 = (\R^2, dx^2-dy^2)$, let 
	\[
		\gamma(t) = (x(t),y(t)), \quad t \in [a,b],
	\]
	be a spacelike curve such that $\lnorm{\gamma'(t)} = 1$. Consider the spacelike integer $1$-varifold given by $\gamma$, that is, 
	\[
		\int_{G_1(\mathbf{L}^2)} \phi(x,S)\, d\opn{Var}(\gamma)(x,S) = \int_a^b \phi(\gamma(t),\Tan(M,\gamma(t)))\sqrt{\lnorm{T(t)}}\, dt,
	\] 
	where $T(t) = \gamma'(t)$. If $X = (u,v) \in C_c^1(\mathbf{L}^2;\mathbf{L}^2)$, then $\nabla_{T}X = (T(u),T(v))$ and 
	\begin{align*}
		\delta \opn{Var}(M) &= \int \Div_\gamma X \, d\gamma\\
		&= \int_\gamma \esc{T}{\nabla_T X}\, d\sigma^1\\
		&= \int_a^b \left[ x'(t)T(u)-y'(t)T(v) \right] \, dt 
	\end{align*}
	Since $T(u) = \partial_t u(x(t),y(t))$ and $T(v) = \partial_t v(x(t),y(t))$, integrating by parts, we obtain
	\begin{align*}
		\delta \opn{Var}(M) &= \int_a^b \left[ x'(t)\partial_t u(t) - y'(t)\partial_t v(t) \right] \, dt\\
		&= \left[x'(t)u(t) \right]_{t=a}^{t=b} - \int_a^b x''(t)u(t)\, dt - \left[y'(t)v(t) \right]_{t=a}^{t=b} + \int_a^b y''(t)v(t)\, dt\\ 
		&= \int_a^b \left[ -x''(t)u(t) + y''(t)v(t) \right] \, dt + (x'u-y'v)
        (b) - (x'u - y'v)(a)\\
        &= \int_a^b \left[ -x''(t)u(t) + y''(t)v(t) \right] \, dt + \int (x'u-y'v)(t)\, d\delta_b - \int (x'u - y'v)(t) \, d\delta_a
	\end{align*}
	where $\delta_a$ and $\delta_b$ are Dirac measures. Finally, let
    \[
        \mathcal{T} = \{t \in (a,b) : \abs{(-x''(t),y''(t)}_{\R^2} \neq 0 \}
    \]
    and 
	\[ 
	\tilde{\eta}(t) = 
	\begin{cases}
		\frac{(-x''(t), y''(t))}{\abs{(-x''(t), y''(t))}_{\R^2}}, &  t \in \mathcal{T} ,\\
		(x'(t),-y'(t)), &  t=b,\\
		(-x'(t),y'(t)), &  t=a, \\
		(0,0), & \text{otherwise}.
	\end{cases} 
	\]
	Using the previous notation, we have that $f = \abs{(-x'',y'')}_{\R^2}$ and $\beta = \delta_a + \delta_b$. Therefore, 
	\[ 
	H = (x'',y''), \quad \eta(a) = -T(a), \quad \text{and} \quad \eta(b) = T(b). 
	\]
	
\end{example}

\begin{definition}
	Let $\Gamma$ be a properly embedded $(n-1)$-dimensional spacelike submanifold of $U$. We denote by $S\mathscr{V}_n(U,\Gamma)$ the set of spacelike integer $n$-rectifiable Radon measures $\mu$ in $U$ such that $\mu$ has locally bounded first variation, that is,  
	\begin{equation}\label{eq:DivergenceTheorem}
		\int \Div_\mu X\, d\mu = -\int \esc{H}{X}\, d\mu + \int \esc{X}{\eta}\, d\beta. 
	\end{equation}
	for all $X \in C_c^1(U;\Rnm)$, and such that
	\begin{enumerate}[(1)]
		\item $H(x) \in \Tan(\mu,x)^\perp$ for $\mu$-almost every $x \in U$.
		\item $\eta(x) \in \Tan(\Gamma,x)^\perp$ is spacelike for $\beta$-almost every $x \in U$.
		\item $\sqrt{\lnorm{\eta}}\beta \leq \sigma^{n-1} \resmes \Gamma$ and, in particular, $\spt\beta \subset \Gamma$.   
	\end{enumerate}
\end{definition}

Therefore, if $\mu \in S\mathscr{V}_n(U,\Gamma)$, by analogy with the classical identity \eqref{Eq:DivergenceTheorem}, we call 
\[ 
H, \quad \hat{\beta} = \sqrt{\lnorm{\eta}} \beta, \quad \text{and} \quad \hat{\eta} = \eta / \sqrt{\lnorm{\eta}}  
\]
the \textit{generalized mean curvature}, the \textit{generalized boundary measure} and the \textit{generalized unit co-normal} of $\mu$, respectively.

\begin{remark}
	If $M$ is a smooth, properly embedded, $n$-dimensional, spacelike submanifold of $U$, the boundary vector $\hat{\eta}$ is the usual outward conormal, that is,
	it is tangent to $M$ and orthogonal to $\Gamma=\partial M$. In the weak
	definition above we only encode the boundary contribution as a spacelike
	vector-valued measure supported on $\Gamma$. Thus $\eta$ should be understood
	as a generalized boundary vector. When an appropriate boundary tangent trace
	exists, one may additionally require that
	\[
	\Tan(\Gamma,x) \oplus \R\eta(x)
	\]
	coincides with the boundary trace of the tangent planes of $\mu \in S\mathscr{V}_n(U,\Gamma)$.
\end{remark}

The following example illustrates that the term $\sqrt{\lnorm{\eta}}$ acts as an estimate on how spacelike is $\mu$ at $\Gamma$. 

\begin{example}
    In the Lorentz-Minkowski space $\L^3 = (\R^3, dx^2 + dy^2 -dz^2)$, let $M$ be the smooth spacelike  hypersurface with boundary $\Gamma$ constructed by rotating the curve 
    \[
    z = y^2, \quad y \in [-1/2,1/2],
    \]
    around the $z$-axis. Then, we have that 
    \[
    \Gamma = \{(x,y,z) \in \L^3 : x^2 + y^2 = 1/4,\, z=1/4\}
    \]
    and $d\Gamma = d(\H^1 \resmes \Gamma)$, but $ \sqrt{\lnorm{\eta}}\beta = 0$, since the tangent planes of $M$ are asymptotic to the light cone at $\Gamma$.   
\end{example}

Let 
\[ 
	\nu(\mu,x) = \lim_{r \to 0} \frac{\int_{B_r(x)}\eta\,d\beta}{(\sigma^{n-1} \resmes \Gamma)(B_r(x))} 
\]
where the limit exists, and let $\nu(\mu,x) = 0$ where the limit does not exist. By Radon-Nikodym Theorem \cite[Theorem 4.7]{Simon83}, the limit exists $(\sigma^{n-1} \resmes \Gamma)$-almost everywhere and we can rewrite \eqref{eq:DivergenceTheorem} as 
\[ 
	\int \Div_\mu X\, d\mu = -\int \esc{H}{X}\, d\mu + \int_\Gamma \esc{\nu}{X}\, d\sigma^{n-1}. 
\]
or, using $\Gamma$ to denote its associated Radon measure, as 
\[ 
	\int \Div_\mu X\, d\mu = -\int \esc{H}{X}\, d\mu + \int \esc{\nu}{X}\, d\Gamma. 
\]

\begin{lemma}\label{lemma:ConormalEquivalence}
	The condition that $\sqrt{\lnorm{\eta}}\beta \leq \sigma^{n-1} \resmes \Gamma$ is equivalent to the condition that
	\[
		0 < \lnorm{\nu} \leq 1,
	\]
	$\sigma^{n-1}$ almost everywhere on $\{x \in \Gamma : \nu(x) \neq 0\}$ and
	\[
		\int \Div_\mu X\, d\mu = -\int \esc{H}{X}\, d\mu + \int_\Gamma \esc{\nu}{X}\, d\sigma^{n-1},
	\]
	for every $X \in C_c^1(U;\Rnm)$.  
\end{lemma}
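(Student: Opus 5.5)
We prove both implications by identifying the vector measure $\eta\,\beta$ with $\nu\,\sigma^{n-1}\resmes\Gamma$. Throughout, write $\gamma = \sigma^{n-1}\resmes\Gamma$. On $\Gamma$, $\sigma^{n-1}$ is comparable to $\H^{n-1}$ because $\Gamma$ is smooth and spacelike, so $\gamma$ is a Radon measure on $U$ to which the Radon–Nikodym and differentiation theorems \cite[Theorem 4.7]{Simon83} apply.

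\emph{Forward direction.} Assume $\sqrt{\lnorm{\eta}}\,\beta \le \gamma$.
\begin{enumerate}[(1)]
\item Split $\beta$ on $\{\eta=0\}$ and $\{\eta\neq0\}$. The part on $\{\eta = 0\}$ contributes nothing to $\int \esc{X}{\eta}\,d\beta$. On $\{\eta\neq 0\}$, $\eta$ is spacelike and nonzero, so $\sqrt{\lnorm{\eta}}>0$. Hence $\beta\resmes\{\eta\neq 0\} = \lnorm{\eta}^{-1/2}\,\hat\beta$, where $\hat\beta=\sqrt{\lnorm{\eta}}\beta \le \gamma$. In particular $\hat\beta \ll \gamma$.
\item Let $\rho = d\hat\beta/d\gamma$. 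Then $0\le \rho\le 1$ $\gamma$-a.e., and $\eta\,\beta = \hat\eta\,\hat\beta = \rho\,\hat\eta\,\gamma$, where $\hat\eta = \eta/\sqrt{\lnorm{\eta}}$ satisfies $\lnorm{\hat\eta}=1$.
\item Since $\eta\beta$ is absolutely continuous with respect to $\gamma$ with density $\rho\hat\eta$, the differentiation theorem gives $\nu = \rho\hat\eta$ $\gamma$-a.e. This yields the divergence identity with $\int_\Gamma\esc{\nu}{X}\,d\sigma^{n-1}$.
\item Finally, $\lnorm{\nu} = \rho^2\in[0,1]$, and where $\nu\neq0$ we have $\rho>0$. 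This gives $0<\lnorm{\nu}\le 1$.
\end{enumerate}
The one point needing care is that the differentiation in the definition of $\nu$ is of a vector measure with respect to $\gamma$. This is done componentwise in the Euclidean coordinates, as \cite[Theorem 4.7]{Simon83} is stated for Radon measures, and $\eta\beta$ is a signed Radon vector measure because $\abs{\eta}_{\R^{n+m}}$ is locally $\beta$-integrable by the construction via Riesz.

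\emph{Converse direction.} Assume the divergence identity holds with $\nu$ and that $0<\lnorm{\nu}\le1$ $\gamma$-a.e.\ on $\{\nu\neq0\}$.
\begin{enumerate}[(1)]
\item Comparing the two representations of $\delta V$ restricted to the singular part gives
\[
\int\esc{X}{\eta}\,d\beta=\int\esc{X}{\nu}\,d\gamma
\]
for all $X$. By density, the two vector-valued Radon measures agree: $\eta\beta=\nu\gamma$.
\item This identity is not a tautology. The decomposition $d\abs{\delta V}=f\,d\mu+d\beta$ is Radon–Nikodym with $\beta\perp\mu$, and the $\gamma$-term is concentrated on $\Gamma$, which is $\mu$-null. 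We need $\mu(\Gamma)=0$, which holds because $\Gamma$ is $(n-1)$-dimensional and $\mu\ll\sigma^n\le\H^n$. So the $\gamma$-term is the singular part, and uniqueness in Riesz/Radon–Nikodym identifies it.
\item Since $\eta\beta = \nu\gamma$, taking total variations gives $\abs{\eta}_{\R^{n+m}}\beta = \abs{\nu}_{\R^{n+m}}\gamma$. On the set where $\nu\neq 0$ and $\eta\neq0$, the Riesz normalization of $\tilde\eta$ forces $\eta/\abs{\eta}_{\R^{n+m}} = \nu/\abs{\nu}_{\R^{n+m}}$ $\beta$-a.e.
\item Because the direction is the same and $\sqrt{\lnorm{\cdot}}$ is positively $1$-homogeneous, we get $\sqrt{\lnorm{\eta}}\,\beta = \sqrt{\lnorm{\nu}}\,\gamma$.
\item Since $\sqrt{\lnorm{\nu}} \le 1$ on $\{\nu\neq0\}$ and the left side vanishes where $\eta=0$, we conclude $\sqrt{\lnorm{\eta}}\,\beta\le\gamma$.
\end{enumerate}
The spacelike condition on $\eta$ follows from $\lnorm{\nu}>0$ and the parallelism in step (3). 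This converse step, identifying the boundary term uniquely as the singular part, is the main subtlety of the proof.
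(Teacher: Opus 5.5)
Your proposal is correct and follows essentially the same route as the paper. In the forward direction you take the Radon--Nikodym density of $\sqrt{\lnorm{\eta}}\,\beta$ with respect to $\sigma^{n-1}\resmes\Gamma$ and identify $\nu=\rho\hat\eta$; in the converse you identify $\eta\beta=\nu\,\sigma^{n-1}\resmes\Gamma$, deduce $\nu=b\eta$ with $b\ge 0$, and conclude $\sqrt{\lnorm{\eta}}\,\beta=\sqrt{\lnorm{\nu}}\,\sigma^{n-1}\resmes\Gamma$. The only difference is your converse step (2): the same $H$ appears in both identities, so the boundary terms already agree by direct subtraction, and your appeal to $\mu(\Gamma)=0$ and to uniqueness of the Lebesgue decomposition is harmless but unnecessary.
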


\begin{proof}
	If $\sqrt{\lnorm{\eta}}\beta \leq \sigma^{n-1} \resmes \Gamma$, by Radon-Nikodym Theorem, there exists a $\sigma^{n-1} \resmes \Gamma$ measurable function $0 \leq f \leq 1$ such that $\sqrt{\lnorm{\eta}}\beta = f(\sigma^{n-1} \resmes \Gamma)$ and, consequently, 
    \[
        \eta\beta = \frac{\eta f}{\sqrt{\lnorm{\eta}}}(\sigma^{n-1} \resmes \Gamma).
    \]
    By the definition of the Radon-Nikodym derivative, 
    \[
        \nu(x) = \frac{\eta(x)f(x)}{\sqrt{\lnorm{\eta(x)}}}
    \]
    for $\sigma^{n-1}$ almost every $x \in \Gamma$. Therefore, $0 < \lnorm{\nu} \leq  1$ for $\sigma^{n-1}$ almost everywhere on $\{x \in \Gamma : \nu(x) \neq 0\}$ and 
	\[ 
	\int \Div_\mu X\, d\mu = -\int \esc{H}{X}\, d\mu + \int_\Gamma \esc{\nu}{X}\, d\sigma^{n-1} 
	\] 
	for every $X \in C_c^1(U;\Rnm)$.
	
	On the other hand, if $0 < \lnorm{\nu} \leq  1$ for $\sigma^{n-1}$ almost everywhere on $\{x \in \Gamma : \nu(x) \neq 0\}$ and
	\[ 
	\int \Div_\mu X\, d\mu 
	= -\int \esc{H}{X}\, d\mu + \int \esc{\eta}{X}\, d\beta 
	= -\int \esc{H}{X}\, d\mu + \int_\Gamma \esc{\nu}{X}\, d\sigma^{n-1}, 
	\]
	then $\eta\beta = \nu \sigma^{n-1} \resmes \Gamma$ as vector measures. Therefore, there exists a $\beta$-measurable function $b \geq 0$ such that $\nu = b\eta$ and, consequently,
	\[ 
		\sqrt{\lnorm{\eta}}\beta 
		= b \sqrt{\lnorm{\eta}}\sigma^{n-1} \resmes \Gamma
		= \sqrt{\lnorm{\nu}} \sigma^{n-1} \resmes \Gamma \leq \sigma^{n-1} \resmes \Gamma. 
	\]
\end{proof}

\subsection{Compactness of Integer Spacelike Varifolds}

In the following theorem, we use the notation $H_i(\cdot) = H(\mu_i,\cdot)$, $H(\cdot) = H(\mu,\cdot)$, $\hat{\eta}_i(\cdot) = \eta(\mu_i,\cdot)/\sqrt{\lnorm{\eta(\mu_i,\cdot)}}$ and $\hat{\beta}_i = \sqrt{\lnorm{\eta(\mu_i,\cdot)}}\beta(\mu_i)$, $\nu_i(\cdot) = \nu(\mu_i,\cdot)$ and $\nu(\cdot) = \nu(\mu,\cdot)$.

\begin{theorem}[Spacelike Varifold Closure Theorem]\label{CTRV}
	Let $(\mu_i)_{i \in \mathbf{N}} \subset S\mathscr{V}_n(U,\Gamma_i)$, where $\Gamma_i$ are smooth spacelike $(n-1)$-dimensional submanifold of $U$ that converge in $C^\infty$ to a smooth spacelike submanifold $\Gamma$. Suppose that the $\mu_i$ converges weakly to a Radon measure $\mu$,
	\begin{equation}\label{eq:UniformMeanCurvature}
		d_K = \sup_i \int_K -\lnorm{H_i}\, d\mu_i < \infty, 
	\end{equation}
	\begin{equation}\label{eq:UniformSpacelike}
		g_K = \sup_i \sup \{v^2(\mu_i,x) : x \in K \} < \infty, 
	\end{equation}
	\begin{equation}\label{eq:UniformConormal}
		n_K = \inf_i\inf \{ \lnorm{\nu_i(x)} : x \in K \cap \Gamma_i,\, \nu_i(x) \neq 0 \} > 0. 
	\end{equation}
	for every $K \Subset U$. Then
	\begin{enumerate}[(1)]
		\item For every $K \Subset U$, 
		\[ 
			\sup_i \parent{\int_K \sqrt{-\lnorm{H_i}}\, d\mu_i + \hat{\beta}_i(K)} < \infty. 
		\]
		\item $\mu \in SI\mathcal{M}_n(U)$ and $\Var(\mu_i)$ converges to $\opn{Var}(\mu)$. Thus, if $f \in C_c(G_n(U))$, then  
		\[ \int f(x,\Tan(\mu_i,x))\, d\mu_i(x) \rightarrow \int f(x,\Tan(\mu,x))\, d\mu(x) \]
		\item If $X \in \mathfrak{X}_n(U)$, then 
		\[ 
			\int \esc{X(x,\Tan(\mu_i,x))}{H_i(x)}\, d\mu_i(x) \rightarrow \int \esc{X(x,\Tan(\mu,x))}{H(x)}\, d\mu(x) 
		\]
		\item If $Z \in \mathfrak{X}_{n-1}(U)$, then 
		\[ 
			\int \esc{Z(x,\Tan(\Gamma_i,x))}{\nu_i(x)}\, d\Gamma_i(x) \rightarrow \int \esc{Z(x,\Tan(\Gamma,x))}{\nu(x)}\, d\Gamma(x) 
		\]
		\item $\mu \in S\mathscr{V}_n(U,\Gamma)$.  
	\end{enumerate}  
\end{theorem}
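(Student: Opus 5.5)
The plan is to reduce to the Euclidean Allard integral compactness theorem (Allard's closure theorem for integral varifolds with locally bounded first variation) by means of the uniform spacelikeness bound \eqref{eq:UniformSpacelike}, and then identify the limit objects.

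\emph{Step 1 (item (1)).} Use Cauchy--Schwarz: $\int_K\sqrt{-\lnorm{H_i}}\,d\mu_i\le d_K^{1/2}\mu_i(K)^{1/2}$, and $\sup_i\mu_i(K)<\infty$ by weak convergence. For the boundary term, Lemma~\ref{lemma:ConormalEquivalence} gives $\hat\beta_i=\sqrt{\lnorm{\nu_i}}\,\sigma^{n-1}\resmes\Gamma_i\le\sigma^{n-1}\resmes\Gamma_i$. Smooth convergence $\Gamma_i\to\Gamma$ bounds $\sigma^{n-1}(\Gamma_i\cap K)$ uniformly.

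\emph{Step 2 (Euclidean reformulation, item (2)).} Write $\mu_i=\theta_i\,S(\Tan(\mu_i,\cdot))\,\H^n\resmes M_i$. Then $\tilde\mu_i:=\theta_i\H^n\resmes M_i$ is a Euclidean integer rectifiable measure. Since $S\ge(1-2m+2g_K)^{-n/2}$ on $K$, we get $\tilde\mu_i(K)\le C(g_K)\mu_i(K)$. The bound on $v^2$ keeps tangent planes in a compact set $\mathcal T_K\subset G(n,m)$. On $\mathcal T_K$, the semi-Riemannian projection, the Euclidean projection, $\Div_S$, the Euclidean divergence and $S$ are all smooth functions of the plane. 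Next, control the Euclidean first variation of $\tilde V_i$. Write $\Div^E_{S}X\cdot S^{-1}$ as a semi-Riemannian divergence of a modified field plus error terms. A cleaner route is to avoid this and instead bound $\delta\tilde V_i$ directly. Let $A(S)$ be the smooth matrix-valued function with $\Div^E_S X=\mathrm{tr}(A(S)DX)$; then $\delta \tilde V_i(X)=\int \mathrm{tr}(A DX)S^{-1}\,d\mu_i$. This is not directly a first variation, so I would instead invoke Allard's theorem for \emph{general} varifolds whose first variation is locally bounded. To get that bound, I would use that integrality plus a bound on $\int |H_i|_{\R^{n+m}}$ is what matters. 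Use estimate \eqref{eq:GradientEstimateI}: for normal $H$, $|H|_{\R^{n+m}}^2\le C(g_K)(-\lnorm{H})$, since in an orthonormal normal frame $\xi_j$ the vectors $\xi_j$ have Euclidean norm bounded by $v$. Then $|\delta V_i(X)|\le C\sup|X|$ in the semi-Riemannian sense, by Step 1.

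The key claim, which I expect to be the main obstacle, is that a varifold with tangents in compact $\mathcal T_K$ and bounded semi-Riemannian first variation has bounded Euclidean first variation in the weak sense required by Allard's compactness (Allard 6.4). I would try to get around it by noting that Allard's integral compactness proof uses only the monotonicity-type density lower bound and rectifiability criteria. Both can be obtained from a semi-Riemannian monotonicity computation with the radial field $X=\phi(|x|)x$ (Euclidean radial). Here $\Div_S x=n+O(v^2)$ error is \emph{not} small, so I would instead use the Euclidean-linear change of variables $L$ mapping $\R^{n,m}$-orthonormality to the Euclidean one near a fixed plane $T\in\mathcal T_K$. Blow-ups at a point have tangent planes near a fixed $T$, and there $\Div_S$ and $\Div^E_{LS}$ agree to first order. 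The conclusion is that the limit of $\Var(\mu_i)$ is a rectifiable integral varifold whose measure has the form $\theta S\,\H^n$ with integer $\theta$. Its tangent planes lie in $\mathcal T_K$, hence are spacelike, so $\mu\in SI\mathcal M_n(U)$.

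\emph{Step 3 (items (3),(4),(5)).} By Step 1 the vector measures $H_i\mu_i$ and $\nu_i\Gamma_i$, together with their lifts to $G_n(U)$, have bounded total variation. Extract weak limits and pass the first-variation identity to the limit using item (2) and the Remark on convergence of $\delta\Var(\mu_i)$. The limit of $H_i\mu_i$ is absolutely continuous with respect to $\mu$ with $L^2$ density, by lower semicontinuity of $\int|H_i|^2_{\R^{n+m}}$ (Hutchinson's argument). It is normal because $\Pi_{S}H_i=0$ passes to the limit through functions on $G_n(U)$. The boundary limit is supported on $\Gamma$. Its density $\nu$ satisfies $\nu\perp\Tan(\Gamma)$, and $\lnorm{\nu}\ge n_K>0$ where it is nonzero. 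It also satisfies $\lnorm{\nu}\le1$, since $\{\lnorm{\cdot}\ge n_K\}\cap\{\lnorm\cdot\le1\}\cap\Tan(\Gamma)^\perp$, restricted to the Euclidean-bounded part, is convex on the spacelike cone components. This convexity is what makes $n_K$ necessary. Lemma~\ref{lemma:ConormalEquivalence} then gives (5).
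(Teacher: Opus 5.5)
Your outline follows the paper's route. Step~1 is the paper's argument. The paper likewise passes to $\tilde V_i=\theta_i\H^n\resmes M_i\otimes\delta_{\Tan(M_i,\cdot)}$, bounds $\tilde\mu_i(K)\le c_K/s_K$, applies Allard's integral compactness theorem and transfers back through the continuous function $S$. It then treats $H$ and $\nu$ with Hutchinson-type results (White's Theorems~3.1, 3.3 and Corollary~3.2) and a Lusin--Mazur argument on the convex sheets of $\{\lnorm{u}\ge n_K\}\subset\Tan(\Gamma,x)^\perp$.

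\textbf{The main gap is in Step~2.} The hypothesis of Allard's theorem is $\abs{\delta_E\tilde V_i(X)}\le c\sup_K\enorm{X}$, and items (2)--(5) all rest on it. You flag it as the main obstacle but never prove it, and your substitute does not work.
\begin{itemize}
\item $\Div_S x=n$ exactly, since $\nabla_\tau x=\tau$. The defect comes from the Euclidean cutoff: $\Div_S(\phi(r)x)=n\phi+\phi'(r)\esc{\Pi_S(\xi x)}{x}/r$ with $r=\enorm{x}$. The ratio $\esc{\Pi_S(\xi x)}{x}/r^2$ lies in $[0,1]$ in the Euclidean setting, but here it is negative or larger than $1$ for suitable $x\notin S$ once $S$ is tilted. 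So no monotone density ratio comes out, and hence no uniform density lower bound.
\item ``Blow-ups have tangent planes near a fixed $T$'' holds $\mu_i$-a.e.\ for each fixed $i$. It does not hold uniformly in $i$ at a fixed scale, which is exactly what a compactness argument needs, so invoking it is circular.
\item Structurally, $\delta\Var(\mu_i)$ is the anisotropic first variation of $\tilde V_i$ for the integrand $T\mapsto S(T)$. Monotonicity is known to fail for general anisotropic integrands (Allard's characterization of the area integrand). You would therefore need an anisotropic rectifiability/integrality theory with its hypotheses checked for $S$.
\end{itemize}
For comparison, the paper crosses this step by expanding $\varsigma_j=\sum_\alpha\esc{\varsigma_j}{\tau_\alpha}\tau_\alpha$ and using $\esc{\varsigma_j}{\tau_\alpha}^2\le v^2$. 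That only gives $\Div^E_SX=\sum_{\alpha,\beta}g^{\alpha\beta}\esc{\xi\tau_\alpha}{\nabla_{\tau_\beta}X}$, with $(g^{\alpha\beta})$ the inverse of $(g_E(\tau_\alpha,\tau_\beta))$. This is a different contraction of $DX$ from $\Div_SX$, so it yields a bound by $\sup\enorm{DX}$ rather than $\sup\enorm{X}$. Your suspicion is justified, but the estimate is still missing from your argument.

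\textbf{The boundary term causes further problems in Steps~2 and~3.}
\begin{itemize}
\item \emph{No Euclidean bound on $\nu_i$.} Step~1 bounds $\hat\beta_i(K)=\int_K\sqrt{\lnorm{\nu_i}}\,d\Gamma_i$. But $\Tan(\Gamma_i,x)^\perp$ has signature $(1,m)$, so $n_K\le\lnorm{\nu_i}\le1$ allows arbitrarily large $\enorm{\nu_i}$; think of $(\cosh a,\sinh a)$. $H_i$ is normal to $\mu_i$ and hence controlled by $v^2(\mu_i,\cdot)$, but nothing in the hypotheses bounds $\enorm{\nu_i}$. So neither $\abs{\int\esc{\nu_i}{X}\,d\Gamma_i}\le C\sup\enorm{X}$ (used in your first-variation bound) nor bounded total variation of $\nu_i\Gamma_i$ (needed for item (4)) follows. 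The paper asserts such a Euclidean bound from the $n_K$ hypothesis, which does not yield it either.
\item \emph{The convexity claim fails for the upper bound.} Write $u=(s,y)\in\R\times\R^m$. One sheet of $\{n_K\le\lnorm{u}\le1\}$ is $\{\sqrt{n_K+\abs{y}^2}\le s\le\sqrt{1+\abs{y}^2}\}$. For $m=1$, the midpoint of $(\sqrt2,\pm1)$ has $\lnorm{u}=2$, and truncating to a Euclidean ball does not help. Only the sheets of $\{\lnorm{u}\ge n_K\}$ are convex. By the reverse Cauchy--Schwarz inequality $\esc{u}{w}\ge\sqrt{\lnorm{u}}\sqrt{\lnorm{w}}$ on a common sheet, averages of distinct unit vectors there have $\lnorm{\cdot}>1$. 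Mazur gives at most the lower bound, and $\lnorm{\nu}\le1$ needs a different argument. The same reversed inequality defeats the paper's duality test against $Z$ with $\lnorm{Z}\le1$.
\item \emph{Sheet switching.} Even the lower bound requires excluding that $\nu_i$ switches sheets on ever finer scales, since $\tfrac12(\cosh a,\sinh a)+\tfrac12(-\cosh a,\sinh a)$ is nonzero and timelike. You do not address this. The paper's device (connected components of positive measure of a Lusin set) would itself need repair, because a closed set of positive measure can be totally disconnected.
\end{itemize}
As written, only item (1) is established.
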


\begin{proof}
	Since the $\mu_i$ converge to $\mu$, 
	\[
		c_K \coloneq \sup_i \mu_i(K) < \infty
	\]
	for every $K \Subset U$. Thus
	\[
		\int_K \sqrt{-\lnorm{H_i}}\, d\mu_i \leq (\mu_i(K))^{1/2}\left( \int_K -\lnorm{H_i}\, d\mu_i \right)^{1/2} \leq (c_K d_K)^{1/2}
	\]
	Also,
	\[ 
		\sup_i \hat{\beta}_i(K) \leq \sup_i \sigma^{n-1}(\Gamma_i \cap K) < \infty. 
	\]
	since $\Gamma_i$ converges to $\Gamma$ in $C^\infty$. Therefore, by \eqref{eq:GradientEstimateI}, 
	\begin{align}\label{eq:AllardHypothesis}
		\abs{\delta \opn{Var}(\mu_i)(X)} &\leq \int_K \abs{\esc{X}{H_i}}\, d\mu_{V_i} 
		+ \int_K \abs{\esc{\hat{\eta}_i}{X}}\,d\hat{\beta}_i  \\
		&\leq c(n,m,c_K,d_K,g_K,\Gamma \cap K) \sup_K \enorm{X}, \nonumber
	\end{align}
	for any $X \in C_c^1(U;\Rnm)$ such that $\spt X \subset K \Subset U$. 

    For any $x \in U$ such that $\Tan(\mu_i,x)$ exists, we denote by $\sqrt{\det g_S(\mu_i,x)}$ and $\sqrt{\det g_E(\mu_i,x) }$ the induced volume elements of the canonical pseudo-Euclidean metric of $\Rnm$ and the canonical Euclidean metric of $\R^{n+m}$ on $\Tan(\mu_i,x)$ respectively. Let
	\begin{equation}\label{eq:DefinitionS}
	S_i(x) = S(\Tan(\mu_i,x)) = \frac{\sqrt{\det g_S(\mu_i,x)}}{\sqrt{\det g_E(\mu_i,x) }} \in (0,1],
	\end{equation}
    where $\Tan(\mu_i,x)$ exists, that is, $S_i$ is defined for $\mu_i$ almost every $x \in U$. By \eqref{eq:UniformSpacelike}, we have that \begin{equation}\label{eq:UniformSpacelikeII}
		s_K := \inf_i \inf_{x \in K} S(\Tan(\mu_i,x)) > 0
	\end{equation}
	for every $K \Subset U$. For each $i \in \mathbf N$, here exists a spacelike
	$n$-rectifiable subset $M$ of $U$ and a multiplicity function $\theta_i \in L^1(M_i,\sigma^n)$ that takes values in the nonnegative integers such that
	\[
		\mu_i = \theta_i \sigma^n \resmes M_i.
	\]
	Then, define 
	\[ 
	\tilde{V}_i = \theta_i \H^n \resmes M_i \otimes \delta_{\Tan(M_i,\cdot)}
	\]  
	and 
	\[
		\tilde{\mu}_i = \theta_i \H^n \resmes M_i.
	\]
	For any $K \Subset U$, 
	\[
		\tilde{\mu}_i(K) 
		= \int_{M_i \cap K} \theta_i(x)\, d\H^n(x) 
		= \int_{M_i \cap K} \frac{\theta_i(x)}{S(\Tan(M_i,x))}\, \sigma^n(x) 
		\leq \frac{1}{S_K}\mu_i(K),
	\]
	which implies
	\[
		\sup_i\tilde{\mu}_i(K) \leq \frac{c_K}{s_K}.
	\]
	On the other hand, the Euclidean first variation of $\tilde{V}_i$ is given by 
	\[
		\delta_E \tilde{V}_i(X) = \int \Div^E_{\Tan(M_i,x)}(X) \, d\tilde{\mu}_i(x),
	\] 
	for any $X \in C_c^1(U;\Rnm)$, where 
	\[
		\Div^E_{\Tan(M_i,x)}(X) = \sum_{j=1}^n g_E(\varsigma_j,\nabla_{\varsigma_j}X)
	\]
	and $\{\varsigma_1,\dots,\varsigma_n\}$ is a $g_E$-orthonormal base of $\Tan(M_i,x)$ for $x \in U$. Let $\{\tau_1,\cdots,\tau_n\}$ be a $\lnorm{\cdot}$-orthonormal base of $\Tan(M_i,x)$. Each $\varsigma_j$ can be written as
	\[ 
	\varsigma_j = \sum_{\alpha=1}^n \esc{\varsigma_j}{\tau_\alpha}\tau_\alpha. 
	\] 
	By \eqref{eq:GradientEstimateI}, 
	\[ 
	\esc{\varsigma_j}{\tau_\alpha}^2 \leq v^2(\Tan(M_i,x)). 
	\]
	Therefore, if $X \in C_c^1(U;\Rnm)$ such that $\spt X \subset K \Subset U$, by \eqref{eq:AllardHypothesis}, 
	\[
	\vert \delta_E \tilde{V}_i(X) \vert \leq c(n,m,c_K,d_K,g_K,\Gamma \cap K) \sup_K \enorm{X}.
	\]
	Allard's closure theorem for Riemannian integer $n$-rectifiable varifolds \cite[Theorem 42.7]{Simon83} implies that there exists a subsequence (denoted by the same index) such that $\tilde{V}_i$ converge to a Riemannian integer $n$-rectifiable varifold $\tilde{V}$. 
	 
	Let $\tilde{\mu} = \mu_{\tilde{V}}$. Since $\tilde{V}$ is a Riemannian integer $n$-rectifiable varifold, there exists an $n$-rectifiable subset $M$ of $U$ and a positive multiplicity function $\tilde{\theta} \in L^1_{\opn{loc}}(M,\H^n)$ that takes values in the nonnegative integers such that $\tilde{\mu} = \tilde{\theta}\H^n \resmes M$. We need  to show that $\Tan(\tilde{\mu},x) = \Tan(M,x)$ is spacelike whenever $\Tan(\tilde{\mu},x)$ exists. Fix $K \Subset U$ and let $x_0 \in K$. By the varifold convergence of $\tilde{V}_i$ to $\tilde{V}$, there exists a sequence $(x_i,\Tan(\tilde{\mu}_i,x_i)) \subset \spt \tilde{V}_i$ with $x_i \in K$, such that $(x_i,\Tan(\tilde{\mu}_i,x_i)) \to (x_0,\Tan(\tilde{\mu},x_0))$, and by the continuity of $S$, we obtain $S(\Tan(\tilde{\mu}_i,x_i)) \to S(\Tan(\tilde{\mu},x_0))$. Thus $S(\Tan(M,x)) \geq s_K$ for every $x \in \spt\tilde{\mu} \cap K$ such that $\Tan(\tilde{\mu},x)$ exists. 
	
	Since $\tilde{V}_i$ converges to $\tilde{V}$ and $S$ is a continuous function on $G(n,m)$, 
	\begin{multline}
		\int_{M_i} \theta_i(x)f(x,\Tan(M_i,x))\,d\sigma^n(x)
		= \int_{M_i} \theta_i(x)S(\Tan(M_i,x))f(x,\Tan(M_i,x))\,d\H^n(x)\\
		\rightarrow \int_{M} \theta(x)S(\Tan(M,x))f(x,\Tan(M,x))\,d\H^n(x)
		=\int_{M} \theta(x)f(x,\Tan(M,x))\,d\sigma^n(x).
	\end{multline}
	That is, $\mu = \theta \sigma^n \resmes M$ and $\opn{Var}(\mu_i)$ converges to $\opn{Var}(\mu)$. In particular, the limit $\opn{Var}(\mu)$ does not depend on the choice of subsequence. Thus the original sequence $\opn{Var}(\mu_i)$ converges to $\opn{Var}(\mu)$, which proves assertion (2).
	
	By \eqref{eq:UniformMeanCurvature} and \eqref{eq:UniformSpacelike} we have that
	\[
		\sup_i \int_K \enorm{H_i}^2\, d\mu_i \leq m(n+m)g_Kd_k.
	\]	
	for any $K \Subset U$. From \cite[Theorem 3.1]{White21}, after passing to a subsequence, there exists a $\mu$-measurable vectorfield $\tilde{H}$ such that 
	\begin{equation}\label{eq:MeanCurvatureConvergence}
		\int \esc{X(x,\Tan(\mu_i,x))}{H_i(x)}\, d\mu_i(x) \rightarrow \int \esc{X(x,\Tan(\mu,x))}{\tilde{H}(x)}\, d\mu(x)
	\end{equation}
	for every $X \in \mathfrak{X}_n(U)$. Furthermore, by \cite[Corollary 3.2]{White21}, $H_i(x) \in \Tan(\mu_i,x)^\perp$ for $\mu_i$ almost every $x \in U$ implies $\tilde{H}(x) \in \Tan(\mu,x)^\perp$ for $\mu$ almost every $x \in U$.
	
	By $\eqref{eq:UniformConormal}$ and the convergence of $\Gamma_i$ to $\Gamma$ in $C^\infty$,
	\[ 
	\sup_i\esssup_{\Gamma \cap K} \enorm{\nu_i} \leq c(K \cap \Gamma).
	\]
	for any $K \Subset U$. From \cite[Theorem 3.3]{White21} there exists a $\Gamma$-measurable vectorfield $\tilde{\nu}$ such that 
	\begin{equation}\label{eq:CoNormalConvergence}
		\int \esc{Z(x,\Tan(\Gamma_i,x))}{\nu_i(x)}\, d\Gamma_i(x) \rightarrow \int \esc{Z(x,\Tan(\Gamma,x))}{\tilde{\nu}(x)}\, d\Gamma(x)
	\end{equation}
	for every $Z \in \mathfrak{X}_{n-1}(U)$.  
    
	Fix $K \Subset U$ such that $K \cap \Gamma \neq \emptyset$. Since $\Gamma_i \to \Gamma$ in $C^\infty$, there exists $i_0 \in \mathbf{N}$ such that $K \cap \Gamma_i \neq \emptyset$ for $i \geq i_0$, and a diffeomorphism $\phi_i : \Gamma_i \cap K \to \Gamma \cap K$ such that $\phi_i$ converges to the identity on $\Gamma \cap K$ and $\Tan^\perp(\Gamma_i,\phi(x))$ converges to $\Tan^\perp(\Gamma,x)$ smoothly. Let $\pi_i : \Tan(\Gamma_i,\phi_i(x))^\perp \to \Tan(\Gamma,x)^\perp$ be the orthogonal projection onto $\Tan(\Gamma,x)^\perp$ and $\tilde{\nu}_i(x) = \pi_i(\nu_i(\phi_i(x)))$. Then $\tilde{\nu}_i(x) \in \Tan(\Gamma,x)^\perp$ for $\sigma^{n-1} \resmes \Gamma$ almost every $x \in \Gamma$, so we can  smoothly identify $\Gamma_i$ with $\Gamma$ for $i$ sufficiently large. 
	
	Let $\delta > 0$. For each $i \geq i_0$ we can apply Lusin's theorem to the measurable maps 
	\[ \tilde{\nu}_i : \Gamma \cap K \to \Rnm \]
	that is, there exists a closed subset $F_i \subset K \cap \Gamma$ such that 
	\[ 
		\sigma^{n-1}((K \cap \Gamma) \setminus F_i) \leq \delta 2^{-i} 
	\]
	and such that $\tilde{\nu}_i$ is continuous in $F_i$ in the subspace topology. If we let $F = \cap_{i \geq i_0} F_i$, then 
	\[ 
		\sigma^{n-1}((K \cap \Gamma) \setminus F) \leq \sum_{i \geq i_0} \delta 2^{-i} \leq \delta
	\]
	and $\tilde{\nu}_i$ is continuous in $F$ for every $i \geq i_0$ in the subspace topology. 
	
	Since $\Gamma$ is $(n-1)$-dimensional and spacelike, $\Tan(\Gamma,x)^\perp$ has signature $(1,m)$ and $\{u \in \Tan(\Gamma,x)^\perp : \lnorm{u} \geq n_K > 0\}$ consist of two disjoint, closed convex sheets for every $x \in \Tan(\Gamma,x)^\perp$. Let $(U_\ell)_{\ell \in \mathbf{N}}$ be the connected components of $F$ such that $\sigma^{n-1}(U_\ell) > 0$. For each $i \geq i_0$, since $U_\ell$ is connected and $\tilde{\nu}_i|_{U_\ell}$ is continuous, $\tilde{\nu}_i(U_\ell) = \{0\}$ or $\tilde{\nu}_i(U_\ell)$ lies in one fixed sheet. After passing to a subsequence (labeled by the same index), we can assume that, for each $\ell$, $\tilde{\nu}_i(U_\ell) = \{0\}$ or $\tilde{\nu}_i(U_\ell)$ lies in one fixed spacelike cone for all $i$.  
	
	Suppose there exists $A \subset \{x \in \Gamma : \lnorm{\tilde{\nu}(x)} \leq 0, \, \tilde{\nu}(x) \neq 0 \}$ a Borel subset such that $\sigma^{n-1}(A) > 0$. Then $\sigma^{n-1}(A \cap K) > 0$ for some $K \Subset U$. Let $0 < \delta < \sigma^{n-1}(A \cap K)$. By Lusin's theorem, there exists a closed subset $F_\delta \subset \Gamma \cap K$ such that $\sigma^{n-1}((K \cap \Gamma) \setminus F_\delta) \leq \delta$ and such that, if $(U_\ell)_{\ell \in \mathbf{N}}$ are the connected components of $F_\delta$ with $\sigma^{n-1}(U_\ell) > 0$, after passing to a subsequence, we can assume that, for each $\ell$, $\tilde{\nu}_i(U_\ell) = \{0\}$ or $\tilde{\nu}_i(U_\ell)$ lies in one fixed sheet for all $i$. By \eqref{eq:CoNormalConvergence}, 
	\[
	\int \esc{Z}{\tilde{\nu_i}}\, d\Gamma \rightarrow \int \esc{Z}{\tilde{\nu}}\, d\Gamma
	\]
	for every $Z \in C_c^1(U;\Rnm)$ such that $\spt Z \subset K \Subset U$. By Mazur's lemma, there exits a sequence $(u_m)$ made of  finite convex combinations of elements $\tilde{\nu}_i$ that converges to $\tilde{\nu}$ strongly in $\mcal{L}^2(K,d\Gamma)$ and, after passing to a subsequence, $u_m(x) \to \tilde{\nu}(x)$ for $\sigma^{n-1} \resmes \Gamma$ almost every $x \in \Gamma \cap K$. Therefore, \textbf{\textbf{}}by the convexity of the sheets, we obtain $\tilde{\nu}(x) = 0$ or $\lnorm{\tilde{\nu}(x)} \geq n_K > 0$ for $x \in F_\delta$, but $A \cap K \subset (K \cap \Gamma) \setminus F_\delta$ and $\sigma^{n-1}(A \cap K) \leq \delta$, reaching a contradiction. 
	
	By \eqref{eq:CoNormalConvergence} and Lemma \ref{lemma:ConormalEquivalence}, for every $Z \in \mathfrak{X}_{n-1}(U)$ such that $0 < \lnorm{Z(x,\Tan(\Gamma_i,x))} \leq 1$ if $Z(x,\Tan(\Gamma_i,x)) \neq 0$, we get
	\[
	\int \esc{Z}{\nu_i}\, d\Gamma_i \leq \int \sqrt{\lnorm{Z}}\, d\Gamma_i \leq \Gamma_i(\spt Z)
	\]
	Letting $i \to \infty$, 
	\[
	\int \esc{Z}{\tilde{\nu}}\, d\Gamma \leq  \Gamma(\spt Z),
	\]
	from which it follows $\lnorm{\tilde{\nu}(x)} \leq 1$ for $\sigma^{n-1}$ almost every $x \in \Gamma$.
	
	For every $X \in C^1_c(U;\Rnm)$,
	\[
	\delta \opn{Var}(\mu_i)(X) = -\int \esc{H_i}{X}\, d\mu_i + \int_\Gamma \esc{\nu_i}{X}\, d\Gamma.
	\]
	By the varifold convergence of the $\opn{Var}(\mu_i)$ to $V$, and by \eqref{eq:MeanCurvatureConvergence} and \eqref{eq:CoNormalConvergence}, it follows that
	\[
	\delta V(X) = -\int \esc{\tilde{H}}{X}\, d\mu + \int_\Gamma \esc{\tilde{\nu}}{X}\, d\Gamma. 
	\] 
	Consequently, $H(\mu,\cdot) = H(\cdot)$, $\nu(\mu,\cdot) = \tilde{\nu}$ and $\mu \in S\mathscr{V}_n(U,\Gamma)$. We passed to a subsequence but, since the limit $H(\mu,\cdot)$ is independent of the choice of subsequence because $\delta \opn{Var}(\mu_i)(X)$ is bounded and every convergent subsequence has the same limit, in fact \eqref{eq:MeanCurvatureConvergence} and \eqref{eq:CoNormalConvergence} hold for the original sequence. 
\end{proof} 

\section{Spacelike Brakke flows with boundary}

Suppose that we have a smooth, properly embedded, solution $\{M_t\}_{t \in [0, T)}$ of spacelike mean curvature flow with fixed boundary in an open subset $U$ of $\Rnm$. According to the evolution equations (see \cite{LiSalavessa11}), we obtain
\begin{align*}
	\frac{d}{dt}\int_{M_t}\phi \, d\mu_{M_t} &= \int_{M_t} \left( \frac{d}{dt}\phi\right)d\mu_{M_t} + \phi \frac{d}{dt} \left(d\mu_{M_t}\right)\\
	&=\int_{M_t} (d\phi(H)+\partial_t\phi)d\mu_{M_t} - \phi \esc{H}{H}d\mu_{M_t}\\
	&=\int_{M_t} (-\phi \esc{H}{H} + \esc{\nabla\phi}{H} + \partial_t\phi)\, d\mu_{M_t}
\end{align*}
for all $\phi \in C_c^2(U \times [0,T);\mathbf{R}^{+})$. 

\begin{definition}
	An $n$-dimensional spacelike Brakke flow with boundary in an open subset $U$ of $\Rnm$ is a pair $((\mu_t)_{t \in I},\Gamma)$ where $\Gamma$ is a smooth, properly embedded $(n-1)$-dimensional spacelike submanifold of $U$ and where $(\mu_t)_{t \in I}$ is a family of Radon measures over an interval $I \subset \R$ such that:
	\begin{enumerate}[(1)]
		\item For almost every $t \in I$, $\mu_t \in S\mathscr{V}_n(U,\Gamma)$. 
		\item If $[a,b] \subset I$ and if $K \subset U$ is compact, then 
	   \begin{equation}\label{eq:TonegawaUniformAreaBound}
			\int_a^b \int_K \left( 1 - \lnorm{H} \right)\,d\mu_t dt < \infty. 
		\end{equation}
		\item If $[a,b] \subset I$ and $\phi \in C_c^2(U \times [a,b];\mathbf{R}^{+})$, then
		\begin{align*}
			\mu_b(\phi) - \mu_a(\phi)
			&\geq \int_a^b\int \left(-\phi \lnorm{H} + \esc{\nabla \phi}{H} + \frac{\partial \phi}{\partial t}\right) \, d\mu_t dt.
		\end{align*}
	\end{enumerate}
\end{definition} 

We denote the gradient function on the flowing varifold as follows. Let 
\[ v^2(\mu_t,x) = v^2(\Tan(\mu_t,x)) \]
where $\mu_t \in S\mathscr{V}_n(U,\Gamma)$ and $\Tan(\mu_t,x)$ exists, and let $v^2(\mu_t,x) = m$ otherwise.  

\subsection{Basic properties of Brakke Flow}

In this section, we present some basic properties of the the spacelike Brakke flow in pseudo-Euclidean space, analogous to the Euclidean case. 

\begin{theorem}\label{lemma:monotonicity}
	Let $(\mu_t)_{t \in I}$ be a spacelike Brakke flow in $U$ with boundary $\Gamma$. Suppose that, for $\phi \in C_c^2(U; \mathbf{R}^{+})$ and $[a,b] \subset I$, 
	\[ 
	K(v^2,[a,b]) := \sup_{t \in [a,b]}\int_{\spt\phi}v^2(\mu_t,\cdot)\,d\mu_t < \infty. 
	\]
	Then 
    \[
    \frac{1}{2} \int_a^b\int -\phi \lnorm{H}\, d\mu_tdt 
    \leq \mu_b(\phi) - \mu_a(\phi) + m (b-a)\max \vert \nabla^2\phi \vert_{\R^{n+m}} K(v^2,[a,b]).
    \]  
\end{theorem}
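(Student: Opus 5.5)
The plan is to apply the Brakke inequality (3) of the definition with the time-independent test function $\phi\in C^2_c(U;\R^+)$, so that $\partial_t\phi=0$, and then absorb the indefinite cross term $\esc{\nabla\phi}{H}$ into half of the good term $-\phi\lnorm{H}\geq 0$. The error will be controlled by the gradient function and the Hessian of $\phi$. Concretely, (3) gives
\[
\mu_b(\phi)-\mu_a(\phi)\geq \int_a^b\int \big(-\phi\lnorm{H}+\esc{\nabla\phi}{H}\big)\,d\mu_t\,dt,
\]
and it suffices to prove, for a.e.\ $t$ (namely those with $\mu_t\in S\mathscr{V}_n(U,\Gamma)$) and $\mu_t$-a.e.\ $x$, the pointwise bound
\[
\abs{\esc{\nabla\phi}{H}}\leq \tfrac12\phi\,(-\lnorm{H})+m\,v^2(\mu_t,x)\max\abs{\nabla^2\phi}_{\R^{n+m}} .
\]
Integrating this over $\spt\phi$ and over $[a,b]$, and using the definition of $K(v^2,[a,b])$, gives the claim. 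Condition (2) of the definition ensures that all the integrals involved are finite, so the rearrangement is legitimate.

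For the pointwise bound, use property (1) of $S\mathscr{V}_n$: $H\in\Tan(\mu_t,x)^\perp$. Hence $\esc{\nabla\phi}{H}=\esc{(\nabla\phi)^\perp}{H}$. Since the tangent plane is spacelike, its orthogonal complement is negative definite, so $-\esc{\cdot}{\cdot}$ is an inner product on it. Cauchy--Schwarz and Young's inequality then give
\[
\abs{\esc{(\nabla\phi)^\perp}{H}}\leq \tfrac12\phi(-\lnorm{H})+\frac{-\lnorm{(\nabla\phi)^\perp}}{2\phi}
\]
wherever $\phi>0$. Where $\phi=0$ we also have $\nabla\phi=0$, since $\phi\geq0$ is $C^2$, so the bound holds trivially there.

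Next, expand $-\lnorm{(\nabla\phi)^\perp}=\sum_{j=1}^m\esc{\nabla\phi}{\xi_j}^2$ in an orthonormal normal frame $\{\xi_1,\dots,\xi_m\}$. The gradient estimate \eqref{eq:GradientEstimateI}, applied to the $1$-tensor $d\phi$, bounds each term by $v^2\enorm{\nabla\phi}^2$, so $-\lnorm{(\nabla\phi)^\perp}\leq m\,v^2\enorm{\nabla\phi}^2$. Finally, apply the standard inequality $\enorm{\nabla\phi}^2\leq 2\max\abs{\nabla^2\phi}_{\R^{n+m}}\,\phi$ for nonnegative $C^2$ functions with compact support, obtained from Taylor expansion along the Euclidean gradient direction. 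Together these give $\frac{-\lnorm{(\nabla\phi)^\perp}}{2\phi}\leq m v^2\max\abs{\nabla^2\phi}$, which is the pointwise bound above.

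The main obstacle is the normal-space Cauchy--Schwarz step. One must check that the semi-Riemannian normal space of a spacelike plane is negative definite, so that $-\esc{\cdot}{\cdot}$ is a genuine inner product there. One must also be careful that the Euclidean-versus-pseudo-Euclidean conversion of $\nabla\phi$ is exactly what \eqref{eq:GradientEstimateI} supplies; this is where the factor $m v^2$ appears. A secondary point is that the identity $\esc{\nabla\phi}{H}=\esc{(\nabla\phi)^\perp}{H}$ relies on $H\perp\Tan(\mu_t,x)$, which is available only for a.e.\ $t$. This suffices, since the time integral ignores null sets.
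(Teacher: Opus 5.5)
Your proposal is correct and is essentially the paper's proof. The paper also uses a time-independent $\phi$ in the Brakke inequality and absorbs $\esc{\nabla\phi}{H}$ into half of $-\phi\lnorm{H}$ by completing the square, $\lnorm{(\nabla\phi/\sqrt{\phi}-H\sqrt{\phi})^\perp}\leq 0$; this is equivalent to your Cauchy--Schwarz plus Young step in the negative definite normal space. It then bounds $-\lnorm{\nabla^\perp\phi}\leq m v^2\enorm{\nabla\phi}^2\leq 2mv^2\phi\max\abs{\nabla^2\phi}_{\R^{n+m}}$ via \eqref{eq:GradientEstimateI} and Ilmanen's Lemma 6.6, exactly as you do.
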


\begin{proof}
    Consider the $1$-covariant tensor $T \colon \Rnm \to \R$ given by 
    \[
        T(v)=\esc{\nabla\phi}{v}.
    \]
    By \eqref{eq:GradientEstimateI} and \cite[Lemma 6.6]{Ilmanen94},
	\begin{equation}\label{eq:GradientEstimateII}
		-\esc{\nabla^\perp\phi}{\nabla^\perp\phi} 
        \leq 	nv^2\abs{T}_{\R^{n+m}}^2 
        = mv^2 \abs{\nabla \phi}_{\R^{n+m}}^2 
        \leq 2mv^2\phi\max \vert \nabla^2\phi \vert_{\R^{n+m}},
    \end{equation}
    where $\vert \nabla^2 \phi \vert_{\R^{n+m}}$ is the norm of the Hessian of $\phi$ as a tensor on $\R^{n+m}$. Thus whenever $\phi > 0$,   
	\[
		0 \geq \esc{\left(\frac{\nabla\phi}{\sqrt{\phi}} - H\sqrt{\phi}\right)^{\perp}}{\left(\frac{\nabla\phi}{\sqrt{\phi}} - H\sqrt{\phi}\right)^{\perp}}
		= \frac{\esc{\nabla^\perp\phi}{\nabla^\perp\phi}}{\phi} - 2\esc{\nabla\phi}{H} + \phi\esc{H}{H}
	\]
	Therefore,
	\begin{equation}\label{Eq:Young'sInequality}
        \langle \nabla\phi, H\rangle - \phi \esc{H}{H} 
        \geq \frac{\esc{\nabla^\perp\phi}{\nabla^\perp\phi}}{2\phi} - \frac{\phi}{2}\esc{H}{H}.	
	\end{equation}
	By the definition of the spacelike Brakke flow,
	\begin{align*}
		\mu_b(\phi)-\mu_a(\phi) 
        &\geq 
        \int_a^b\int \left( -\phi\lnorm{H} + \esc{\nabla\phi}{H} \right) \, d\mu_tdt\\ 
		&\geq 
		\frac{1}{2}\int_a^b\int -\phi \lnorm{H} \, d\mu_tdt -m \max \vert \nabla^2\phi \vert_{\R^{n+m}} \int_a^b\int v^2 \,d\mu_tdt \\
        &\geq 
		\frac{1}{2}\int_a^b\int -\phi \lnorm{H} \, d\mu_tdt -m(b-a) \max \vert \nabla^2\phi \vert_{\R^{n+m}} K(v^2,[a,b]).
	\end{align*}
\end{proof}

\begin{corollary} If $K(v^2,I) < \infty$, then 
    \[
        t \in I \mapsto \mu_t(\phi)+ m(\max \vert \nabla^2 \phi \vert_{\R^{n+m}} )K(v^2,I) t
    \]
    is a non-decreasing function of $t$. 
\end{corollary}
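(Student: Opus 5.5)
The corollary follows from Theorem \ref{lemma:monotonicity} applied to every subinterval $[s,t]\subset I$, since the left-hand side of that inequality is nonnegative. Throughout, $\phi\in C_c^2(U;\R^+)$ is the fixed test function implicit in the statement, and $K(v^2,I) = \sup_{t\in I}\int_{\spt\phi} v^2(\mu_t,\cdot)\,d\mu_t$.

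Fix $s<t$ in $I$. Since $[s,t]\subset I$, we have $K(v^2,[s,t])\le K(v^2,I)<\infty$, so the hypothesis of Theorem \ref{lemma:monotonicity} holds on $[s,t]$. Because $\phi\ge 0$ and $H(x)\in\Tan(\mu_\tau,x)^\perp$ for $\mu_\tau$-almost every $x$ and almost every $\tau$ (condition (1) of $S\mathscr{V}_n(U,\Gamma)$), $H$ is a normal vector to a spacelike plane. Hence $\lnorm{H}\le 0$ and the integrand $-\phi\lnorm{H}$ is nonnegative. The left-hand side
\[
\frac12\int_s^t\int -\phi\lnorm{H}\,d\mu_\tau d\tau
\]
is therefore nonnegative; it is finite by \eqref{eq:TonegawaUniformAreaBound}, but finiteness is not even needed. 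The theorem then gives
\[
0\le \mu_t(\phi)-\mu_s(\phi) + m(t-s)\max\abs{\nabla^2\phi}_{\R^{n+m}}\,K(v^2,[s,t]).
\]

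Next, replace $K(v^2,[s,t])$ by the larger constant $K(v^2,I)$. This is legitimate because $t-s>0$ and $\max\abs{\nabla^2\phi}_{\R^{n+m}}\ge0$, so the added term can only grow. Rearranging yields
\[
\mu_s(\phi)+ m\max\abs{\nabla^2\phi}_{\R^{n+m}}K(v^2,I)\,s\le \mu_t(\phi)+m\max\abs{\nabla^2\phi}_{\R^{n+m}}K(v^2,I)\,t,
\]
which is exactly the claimed monotonicity, since $s<t$ were arbitrary.

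The only point needing care is that Theorem \ref{lemma:monotonicity} applies at every pair of times $s<t$, not just almost every one. This holds because the Brakke inequality in the definition is imposed for all $[a,b]\subset I$ and $\mu_t$ is a Radon measure for every $t$. Membership in $S\mathscr{V}_n(U,\Gamma)$, and hence the sign $\lnorm{H}\le0$, is only needed inside the time integral, where almost every $\tau$ suffices.
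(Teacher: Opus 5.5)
Your proposal is correct and is essentially the argument the paper intends. The paper states the corollary with no separate proof, as an immediate consequence of Theorem~\ref{lemma:monotonicity}: apply it on each $[s,t]\subset I$, note that the left-hand side is nonnegative because $\lnorm{H}\le 0$ for $H$ normal to a spacelike tangent plane, and then enlarge $K(v^2,[s,t])$ to $K(v^2,I)$. Your observation that the Brakke inequality holds for every pair $s<t$, so monotonicity holds at all times rather than almost all, is the right point of care.
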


\begin{corollary}[Increasing property of mass]
	Let $(\mu_t)_{t \in I}$ be a spacelike Brakke flow in $\Rnm$ with boundary $\Gamma$. Suppose that, for $[a,b] \subset I$,
	\[ 
	\int_a^b\int v^2\,d\mu_tdt < \infty. 
	\]
	Then 
	\[ \int d\mu_b \geq \int d\mu_a. \]
\end{corollary}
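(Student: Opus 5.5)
This corollary follows directly from Theorem \ref{lemma:monotonicity} by letting the test function $\phi$ increase to the constant function $1$.

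Since $-\lnorm{H} \geq 0$ ($H$ is normal to a spacelike plane, hence timelike or zero), the left-hand side of Theorem \ref{lemma:monotonicity} is nonnegative. For any admissible $\phi$ this gives
\[
\mu_b(\phi) \geq \mu_a(\phi) - m(b-a)\max\abs{\nabla^2\phi}_{\R^{n+m}}K(v^2,[a,b]).
\]
The only issue is that the error term involves $\sup_t\int v^2\,d\mu_t$, while the hypothesis only gives the time integral $\int_a^b\int v^2\,d\mu_t\,dt$.

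I would handle this by not passing through $K(v^2,[a,b])$. In the last chain of inequalities of the proof of Theorem \ref{lemma:monotonicity}, the penultimate line already reads
\[
\mu_b(\phi)-\mu_a(\phi) \geq \tfrac12\int_a^b\int -\phi\lnorm{H}\,d\mu_t\,dt - m\max\abs{\nabla^2\phi}_{\R^{n+m}}\int_a^b\int_{\spt\phi} v^2\,d\mu_t\,dt.
\]
This only used the Brakke inequality and the pointwise estimate \eqref{eq:GradientEstimateII}, which requires $\phi\in C^2_c$ and $\phi\ge 0$. So I would invoke this integrated form directly, discarding the nonnegative curvature term.

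Next, choose cutoffs $\phi_R(x)=\chi(\abs{x}_{\R^{n+m}}/R)$. Here $\chi\in C_c^\infty([0,\infty);[0,1])$, $\chi\equiv1$ on $[0,1]$ and $\chi\equiv 0$ on $[2,\infty)$. A subtlety is that \eqref{eq:GradientEstimateII} relies on Ilmanen's lemma $\abs{\nabla\phi}^2\le 2\phi\max\abs{\nabla^2\phi}$. That lemma needs $\phi\ge0$ on all of $\R^{n+m}$, which holds here since $U=\Rnm$. Then $\max\abs{\nabla^2\phi_R}\le C/R^2$, and
\[
\mu_b(\phi_R)\ge \mu_a(\phi_R) - \frac{Cm}{R^2}\int_a^b\int v^2\,d\mu_t\,dt.
\]

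Finally let $R\to\infty$. Since $\phi_R\uparrow 1$ pointwise, monotone convergence gives $\mu_a(\phi_R)\to\mu_a(\Rnm)$ and $\mu_b(\phi_R)\to\mu_b(\Rnm)$, possibly infinite. The error term tends to zero by hypothesis. If $\mu_a(\Rnm)=\infty$, the inequality forces $\mu_b(\Rnm)=\infty$ too, so the conclusion holds in all cases.

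The main (mild) obstacle is the bookkeeping just described: using the time-integrated version of the error term rather than the sup-in-time $K(v^2,[a,b])$ of Theorem \ref{lemma:monotonicity}, and checking the global nonnegativity needed for Ilmanen's gradient bound. Everything else is a direct limit argument.
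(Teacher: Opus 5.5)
Your argument is correct and follows the same route as the paper: a radial cutoff in the Brakke inequality, completing the square against the normal part of $\nabla\phi$, the gradient-function bound $-\lnorm{\nabla^\perp\phi}\le m v^2\abs{\nabla\phi}^2_{\R^{n+m}}$, and letting $R\to\infty$ with the time-integrated $v^2$ term in place of $K(v^2,[a,b])$. The only technical difference is that the paper tests with $\phi_R^2$ rather than $\phi_R$, so the completed square produces $\lnorm{\nabla^\perp\phi_R}$ without dividing by $\phi_R$; this needs only $\abs{\nabla\phi_R}_{\R^{n+m}}\le 2/R$ and avoids the bound $\abs{\nabla\phi}^2\le 2\phi\max\abs{\nabla^2\phi}$ (and the nonnegativity check you flagged) altogether.
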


\begin{proof}
    Let $\phi = \phi_R \in C_c^2(U;[0,1])$ satisfy $\phi = 1$ on $B_R(0)$, $\phi = 0$ off $B_{2R}(0)$, and $\abs{\nabla \phi}_{\R^{n+m}} \leq 2/R$. Then by \eqref{eq:GradientEstimateII} and \eqref{Eq:Young'sInequality}, 
    \begin{align*}
        \int \phi^2\, d\mu_b - \int  \phi^2 \, d\mu_a 
        &\geq \int_a^b\int 2 \lnorm{\nabla^\perp\phi} \,d\mu_tdt\\
        &\geq \int_a^b\int -2mv^2 \vert \nabla\phi \vert_{\R^{n+m}}\,d\mu_tdt\\
        &\geq -\frac{8m}{R^2} \int_a^b\int v^2\,d\mu_tdt
    \end{align*}
    Letting $R \to \infty$, we obtain the desired result.  
\end{proof}

\subsection{Compactness Property for the Lorentzian Brakke Flow}

In the following theorem, we write $H_i(x,t)$ for $H(\mu_t^i,x)$, $v^2_i(x,t)$ for $v^2(\mu_t^i,x)$ and $\nu_i(x,t)$ for $\nu(\mu_t^i,x)$. 

\begin{theorem}\label{th:CompactnessSpacelikeBrakkeFlows}
	For each $i \in \mathbf{N}$, let $t \in [a,b] \mapsto \mu_t^i$ be an $n$-dimensional spacelike Brakke flow in $U$ with boundary $\Gamma_i$, where $\Gamma_i$ are smooth spacelike $(n-1)$-dimensional submanifold of $U$ that converge in $C^\infty$ to a smooth spacelike submanifold $\Gamma$. Suppose
	\begin{equation}\label{eq:AreaBound} 
		c_K = \sup_i \sup_{t \in [a,b]} \mu_t^i(K) < \infty,
	\end{equation}
	\begin{equation}\label{eq:GradientBound} 
		g_K = \sup_i \sup_{t \in [a,b]} \sup \{ v_i^2(x,t) : x \in K\} < \infty, 
	\end{equation}
    and
	\[ 
		n_K =  \inf_i \inf_{t \in [a,b]} \inf \{\lnorm{\nu_i(x,t)} : x \in K \cap \Gamma_i, \, \nu_i(x,t) \neq 0 \}  > 0 
	\]
	for every $K \Subset U$. Then, after passing to a subsequence,
	\begin{enumerate}[(1)]
		\item $\mu_t^i$ converges to a Radon measure $\mu_t$ for every $t \in [a,b]$.
		\item $t \in [a,b] \mapsto \mu_t$ is an $n$-dimensional spacelike Brakke flow in $U$ with boundary $\Gamma$.
		\item For almost every $t \in [a,b]$, after passing to a further subsequence (which may depend on $t$), 
        \begin{equation}\label{eq:VarifoldConvergence}
            \opn{Var}(\mu_t^i) \to \opn{Var}(\mu_t),
        \end{equation}
        \begin{equation}\label{eq:MeanCurvatureConvergenceII}
            \int \esc{H_i(x,t)}{X(x,\Tan(\mu_t^i,x))}\, d\mu_t^i(x) \rightarrow \int \esc{H(x,t)}{X(x,\Tan(\mu_t,x))}\, d\mu_t(x),
        \end{equation}
        \begin{equation}\label{eq:ConormalConvergence}
            \int \esc{\nu_i(x,t)}{Y(x,\Tan(\Gamma_i,x))}\, d\Gamma_i(x) \rightarrow \int \esc{\nu(x,t)}{Y(x,\Tan(\Gamma,x))}\, d\Gamma(x)
        \end{equation}
        for every $X \in \mathfrak{X}_n(U)$ and $Y \in \mathfrak{X}_{n-1}(U)$.
	\end{enumerate} 
\end{theorem}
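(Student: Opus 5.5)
The plan is to follow Ilmanen's compactness argument for Brakke flows \cite{Ilmanen94}, with the spacelike monotonicity of Theorem \ref{lemma:monotonicity} in place of the Euclidean one, and the Spacelike Varifold Closure Theorem \ref{CTRV} in place of Allard's. I will carry it out in four steps.

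\emph{Step 1: uniform monotonicity.} Fix $\phi\in C^2_c(U;\R^+)$ with support in a compact $K$. By \eqref{eq:AreaBound} and \eqref{eq:GradientBound},
\[
K(v^2,[a,b])\le g_K c_K
\]
uniformly in $i$. Theorem \ref{lemma:monotonicity} then shows that
\[
t\mapsto \mu^i_t(\phi)+C_\phi t, \qquad C_\phi = m\max\abs{\nabla^2\phi}_{\R^{n+m}}\, g_Kc_K,
\]
is nondecreasing, with $C_\phi$ independent of $i$.

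\emph{Step 2: convergence for every $t$.} Choose a countable family $\{\phi_k\}$ in $C^2_c(U;\R^+)$ that is dense in $C_c(U;\R^+)$. Each $\phi_k$ gives uniformly bounded monotone functions $t \mapsto \mu^i_t(\phi_k)+C_{\phi_k}t$. By Helly's selection theorem and a diagonal argument, we get a subsequence along which $\mu^i_t(\phi_k)$ converges for every $t\in[a,b]$ and every $k$. The limit function in $t$ has only countably many discontinuities. Together with the uniform mass bounds and density, this defines a Radon measure $\mu_t$ with $\mu^i_t\rightharpoonup\mu_t$ for every $t$, which proves (1).

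\emph{Step 3: a.e. varifold convergence, giving (3) and item (1) of the definition.} Apply Theorem \ref{lemma:monotonicity} with a cutoff $\phi\equiv1$ on $K$ (working on a slightly larger compact set) to get
\[
\sup_i\int_a^b\int_K-\lnorm{H_i}\,d\mu^i_t\,dt<\infty .
\]
By Fatou's lemma,
\[
\liminf_i\int_K-\lnorm{H_i}\,d\mu^i_t<\infty \quad\text{for a.e. } t.
\]
For such $t$, pass to a $t$-dependent subsequence realizing the liminf, with bounded mean curvature integrals on an exhaustion by compacts (diagonalize). The hypotheses of Theorem \ref{CTRV} then hold: the $g_K$ and $n_K$ bounds are uniform in $t$. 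Hence $\mu_t\in S\mathscr{V}_n(U,\Gamma)$ and \eqref{eq:VarifoldConvergence}, \eqref{eq:MeanCurvatureConvergenceII}, \eqref{eq:ConormalConvergence} hold. The bound (2) of the definition, $\int_a^b\int_K(1-\lnorm{H})\,d\mu_t\,dt<\infty$, will follow from lower semicontinuity
\[
\int_K-\lnorm{H}\,d\mu_t\le\liminf_i\int_K -\lnorm{H_i}\,d\mu^i_t,
\]
combined with Fatou's lemma in $t$.

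\emph{Step 4: passing to the limit in the Brakke inequality.} This is the main obstacle. The left side $\mu^i_b(\phi)-\mu^i_a(\phi)$ converges by Step 2; time-dependent $\phi$ is handled by the same uniform monotonicity. On the right, $\int\partial_t\phi\,d\mu^i_t$ converges for every $t$, so dominated convergence applies. The difficulty is that we need
\[
\int_a^b\int(-\phi\lnorm{H}+\esc{\nabla\phi}{H})\,d\mu_t\,dt \;\ge\; \limsup_i \int_a^b\int(-\phi\lnorm{H_i}+\esc{\nabla\phi}{H_i})\,d\mu^i_t\,dt,
\]
whereas the quadratic term $-\phi\lnorm{H}$ is only lower semicontinuous, i.e.\ it points the wrong way for a $\limsup$. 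I would treat it as Ilmanen does, arguing by contradiction at the level of upper semicontinuity: suppose the integrand of $\limsup_i$ exceeded the limit expression on a set of times of positive measure. Via Fatou applied to the nonnegative function
\[
t\mapsto \int\bigl(C-\phi\lnorm{H_i}+\esc{\nabla\phi}{H_i}\bigr)\,d\mu^i_t
\]
(nonnegative by \eqref{Eq:Young'sInequality} and \eqref{eq:GradientEstimateII}), one reduces the inequality to a pointwise-in-$t$ statement. At each good $t$ this is handled using \eqref{eq:MeanCurvatureConvergenceII} for the linear term, and for the quadratic term the $L^2$ lower semicontinuity of $H$ in White's framework \cite{White21} with weight $-\lnorm{\cdot}$. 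Because the uniform $v^2$ bound makes $-\lnorm{\cdot}$ on normal spaces comparable to the Euclidean norm squared, this is a convex functional of the pair $(\mathrm{Var}(\mu),H)$.

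If the direction of this inequality cannot be closed directly, the fallback is Ilmanen's refinement: rewrite the Brakke inequality in the integrated form with $\mu^i_s(\phi)$ for all $s$, and pass to the limit only for a.e. $a<b$, then extend to all $a,b$ using the one-sided continuity coming from Step 1.
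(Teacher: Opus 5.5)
Your Steps 1--3 follow the paper's proof: Helly's theorem and a diagonal argument driven by the uniform monotonicity of Theorem \ref{lemma:monotonicity}, then Fatou in $t$ and the closure Theorem \ref{CTRV} at almost every time. You also verify condition (2) of the definition, which the paper leaves implicit. The problem is Step 4, where the inequality you set out to prove is the wrong one. Write
\[
L_i=\mu^i_b(\phi)-\mu^i_a(\phi)-\int_a^b\int\partial_t\phi\,d\mu^i_t\,dt,\qquad r_i(t)=\int\bigl(-\phi\lnorm{H_i}+\esc{\nabla\phi}{H_i}\bigr)\,d\mu^i_t,
\]
and write $L$, $r(t)$ for the same quantities of the limit. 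The spacelike Brakke inequality is $L_i\ge\int_a^b r_i\,dt$, the reverse of the Euclidean one. Moreover $L_i\to L$ by Step 2 and dominated convergence. Hence $L\ge\liminf_i\int_a^b r_i\,dt$, and it suffices to prove
\[
\int_a^b r\,dt\le\liminf_i\int_a^b r_i\,dt,
\]
which is a \emph{lower} semicontinuity statement. The target you wrote, $\int_a^b r\,dt\ge\limsup_i\int_a^b r_i\,dt$, is the Euclidean one (correct when $L_i\le\int_a^b r_i\,dt$), carried over without flipping it along with the Brakke inequality. It is an upper semicontinuity property that nothing in the hypotheses provides; Brakke's formulation is an inequality precisely so that it survives a loss of $\int-\lnorm{H}$ in the limit. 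So the contradiction argument you sketch aims at the wrong target. Conversely, the lower semicontinuity $\int-\phi\lnorm{H}\,d\mu_t\le\liminf_i\int-\phi\lnorm{H_i}\,d\mu^i_t$ of the nonnegative quadratic term does not point the wrong way; it is exactly what is needed.

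Once reoriented, the tools you list close the argument as in the paper. By \eqref{Eq:Young'sInequality} and \eqref{eq:GradientEstimateII}, $r_i\ge -C$, so Fatou gives $\liminf_i\int_a^b r_i\,dt\ge\int_a^b\lambda\,dt$ with $\lambda(t)=\liminf_i r_i(t)$. For almost every $t$ we have $\lambda(t)<\infty$, and along a subsequence realizing $\lambda(t)$ the integrals $\int-\phi\lnorm{H_i}\,d\mu^i_t$ are bounded. Theorem \ref{CTRV} then makes the linear term converge, and lower semicontinuity of the quadratic term gives $\lambda(t)\ge r(t)$. The paper proves that lower semicontinuity by $L^2$ duality against normal sections; your convex-integrand argument in the spirit of \cite{White21} works as well. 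No contradiction argument is needed, and neither is the fallback to almost every $a<b$, since Step 2 already gives convergence at every time.

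A minor point, which the paper also glosses over. Diagonalizing over an exhaustion $K_1\subset K_2\subset\cdots$ does not by itself produce one subsequence along which every $\int_{K_j}-\lnorm{H_i}\,d\mu^i_t$ is bounded; a subsequence realizing the $\liminf$ on $K_1$ may blow up on $K_2$. Instead, apply Fatou to $G_i(t)=\sum_j 2^{-j}(1+M_j)^{-1}\int_{K_j}-\lnorm{H_i}\,d\mu^i_t$, where $M_j=\sup_i\int_a^b\int_{K_j}-\lnorm{H_i}\,d\mu^i_t\,dt$, so that $\int_a^b G_i\,dt\le 1$. In Step 4, apply Fatou to $r_i+\varepsilon G_i$ and then let $\varepsilon\to0$. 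This supplies the bounds on all compact sets needed to invoke Theorem \ref{CTRV} along a subsequence realizing $\liminf_i(r_i+\varepsilon G_i)(t)$.
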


\begin{proof}
    Let $\mathcal{F}$ be a countable subset of $C_c^2(U;\R^+)$ such that the linear span of $\mathcal{F}$ is dense in $C_c(U;\R^+)$. By \eqref{eq:AreaBound}, \eqref{eq:GradientBound} and Proposition \ref{lemma:monotonicity}, for each $\phi \in \mathcal{F}$, the function
    \begin{equation}\label{CompactnessTheoremFunctions}
    t \in [a,b] \mapsto \mu_t^i(\phi) + c(\phi)t
    \end{equation}
    is monotone non-decreasing function. Moreover, each such function is uniformly bounded for all $i \in \mathbf N$. Therefore, by Helly's selection theorem, there exists a subsequence (denoted by the same index) such that, for each fixed $\phi \in \mathcal F$, the function \eqref{CompactnessTheoremFunctions} converges pointwise to a limit function. Since $\mathcal{F}$ is countable, extracting a diagonal subsequence (again, denoted by the same index), we can assume that each of the functions \eqref{CompactnessTheoremFunctions} converges to a limit function. By the Riesz Representation Theorem, for each $t \in [a,b]$, $\mu_t^i$ converges to a Radon measure $\mu_t$.   

	For each $\phi \in C_c^2(U;\R^+)$, by Theorem \ref{lemma:monotonicity},
	\begin{equation}\label{BoundMeanCurvature}
		\sup_i \int_a^b\int -\phi \lnorm{H_i}\,d\mu_t^idt < \infty.
	\end{equation}
	By Fatou's Lemma, 
	\[
		\int_a^b \left( \liminf_i \int -\phi \esc{H_i}{H_i}\, d\mu_t^i \right)dt < \infty 
	\]
	so for almost every $t$, 
	\begin{equation}\label{FatousLemma}
		\liminf_i \int -\phi \esc{H_i}{H_i}\, d\mu_t^i < \infty.
	\end{equation}
	For every such $t$, there is a subsequence (denoted by the same index) such that
	\[
		\sup_i \int -\phi \esc{H_i}{H_i}\, d\mu_t^i < \infty.
	\]
	By Theorem \ref{CTRV}, $\mu_t \in S\mathscr{V}_n(U,\Gamma)$ and \eqref{eq:VarifoldConvergence}, \eqref{eq:MeanCurvatureConvergenceII} and \eqref{eq:ConormalConvergence} hold. 
    
	We notice that $\opn{Var}(\mu_t)$ is well defined independent of the subsequence depending on $t$ for almost every $t \in [a,b]$. This is because an integral varifold $V$ is uniquely determined by its associate measure $\mu_V$. However, we emphasize that the convergence of $\opn{Var}(\mu_t^i)$ to $\opn{Var}(\mu_t)$ as varifolds requires extracting a subsequence depending on $t$, as we have done above.
	
	It remains only to show that $t \in [a,b] \mapsto \mu_t$ is an spacelike $k$-dimensional Brakke flow with boundary $\Gamma$. If $[c,d] \subset [a,b]$ and $\varphi \in C^2_c(U \times [c,d];\R^{+})$, then 
	\begin{align*}
		\mu_d^i(\varphi) - \mu_c^i(\varphi) - \int_c^d\int \frac{\partial \varphi}{\partial t}\, d\mu_t^idt 
		\geq \int_c^d \int -\varphi \esc{H_i}{H_i} + \esc{\nabla\varphi}{H_i}\, d\mu_t^idt
	\end{align*}
	By \eqref{Eq:Young'sInequality}, 
	\[ 
		\int -\varphi \esc{H_i}{H_i} + \esc{\nabla\varphi}{H_i}\,d\mu_t^i \geq \int_{\{\varphi > 0\}} -mv^2\frac{\enorm{\nabla \varphi}^2}{2\varphi}\, d\mu_t^i \geq -C,
	\]
	where $C:=C(c_{\spt\varphi},g_{\spt\varphi},\max \vert \nabla^2\varphi \vert_{\R^{n+m}}) > 0$ is bounded independently of $i$ and $t$. By Fatou's Lemma,
	\begin{gather*}
		\liminf_i \int_c^d \left( C + \int -\varphi \esc{H_i}{H_i} + \esc{\nabla\varphi}{H_i}\,d\mu_t^i  \right)dt\\ 
		\geq \int_c^d \liminf_i \left( C + \int -\varphi \esc{H_i}{H_i} + \esc{\nabla\varphi}{H_i}\,d\mu_t^i  \right)dt,
	\end{gather*}
	which implies
	\begin{gather*}
		\liminf_i \int_c^d \left(\int -\varphi \esc{H_i}{H_i} + \esc{\nabla\varphi}{H_i}\,d\mu_t^i  \right)dt\\ 
		\geq \int_c^d \liminf_i \left(\int -\varphi \esc{H_i}{H_i} + \esc{\nabla\varphi}{H_i}\,d\mu_t^i  \right)dt.
	\end{gather*}
	Letting $i \to \infty$ gives
	\begin{equation}\label{eq:compactness1}
		\mu_d(\varphi) - \mu_c(\varphi) - \int_c^d\int \frac{\partial \varphi}{\partial t}\, d\mu_tdt 
		\geq \int_c^d \lambda(t) dt,
	\end{equation}
	where 
	\[ 
		\lambda(t):= \liminf_i \int -\varphi \esc{H_i}{H_i} + \esc{\nabla\varphi}{H_i}\, d\mu_t^i. 
	\]
	For each $t$ with $\lambda(t) < \infty$, there is a subsequence (denoted by the same index) such that
	\[ 
		\int -\varphi \lnorm{H_i} + \esc{\nabla\varphi}{H_i}\, d\mu_t^i \rightarrow \lambda(t). 
	\]
	For such $t$, we have (as above), $\opn{Var}(\mu_t^i) \to \opn{Var}(\mu_t)$ and 
	\[
		\int \esc{H_i(x,t)}{X(x,\Tan(\mu_t^i,x))} \, d\mu_t^i(x) \rightarrow \int  \esc{H(x,t)}{X(x,\Tan(\mu_t,x)} \, d\mu_t(x)
	\] 
	for all $X \in \mathfrak{X}_n(U)$, which implies 
	\begin{equation}\label{eq:compactness2}
		\int \esc{\nabla\varphi}{H_i}\, d\mu_t^i \rightarrow \int \esc{\nabla\varphi}{H}\, d\mu_t
	\end{equation}
	For all $x \in \spt\mu_t$ where $\Tan(\mu_t,x)$ exists, $-\esc{\cdot}{\cdot}$ is a positive definite metric on $\Tan(\mu_t,x)^\perp$. If $Y \in \mathcal{L}^2_{\opn{loc}}(U;\Tan(\mu_t,\cdot)^\perp)$ such that $\int -\esc{Y}{Y}\,d\mu_t = 1$, then 
	\begin{align*}
		 \int -\varphi^{1/2} \esc{H}{Y}\, d\mu_t  
		 &\leq \int \varphi^{1/2} \sqrt{-\lnorm{H}}\sqrt{-\lnorm{Y}}\, d\mu_t\\
		 &\leq \left( \int -\varphi \esc{H}{H}\, d\mu_t \right)^{1/2}.
	\end{align*}
	Now, if $\int -\varphi \lnorm{H}\, d\mu_t \neq 0$, let 
	\[
		Y = \frac{\varphi^{1/2}H}{\left( \int -\varphi \lnorm{H}\, d\mu_t \right) ^{1/2}} \in \mathcal{L}^2_{\opn{loc}}(U;\Tan(\mu_t,\cdot)^\perp). 
	\] 
	Then 
	\[
		\int -\varphi^{1/2} \esc{H}{Y} \, d\mu_t = \left( \int -\varphi \esc{H}{H}\, d\mu_t \right)^{1/2}.	
	\] 
	Since $C_c^1(U;\Tan(\mu_t,\cdot)^\perp)$ is dense in $\mathcal{L}^2_{\opn{loc}}(U;\Tan(\mu_t,\cdot)^\perp)$ by \cite[\S7.4]{Ilmanen94}, we can compute the following $\mathcal{L}^2$-norm by duality as in \cite[Page 37]{Ilmanen94}, that is, 	
	\begin{equation}\label{eq:NormDuality}
		\begin{aligned}
		\left( \int -\varphi \lnorm{H}\, d\mu_t \right)^{1/2} \\
		&= \sup \left\{	\int -\varphi^{1/2} \esc{H}{Y} \, d\mu_t: Y \in C^1_c(U;\Tan(\mu_t,\cdot)^\perp), \int -\lnorm{Y}\,d\mu_t = 1 \right\} 
		\end{aligned}
	\end{equation}
	Thus
	\begin{align*}
		\int -\varphi^{1/2} \esc{H}{Y} \, d\mu_t &= \lim_{i \to \infty} \int -\varphi^{1/2} \esc{H_i}{Y} \, d\mu_t^i\\
		&\leq \liminf_i \left( \int -\varphi \lnorm{H_i}\, d\mu_t^i \right)^{1/2} \left( \int -\lnorm{Y}\,d\mu_t \right)^{1/2},
	\end{align*}
	from which it follows that
	\begin{equation}\label{eq:compactness3}
		\int -\varphi \lnorm{H}\, d\mu_t 
		\leq \liminf_i \int -\varphi \lnorm{H_i}\, d\mu_t^i.
	\end{equation}
	Finally, using \eqref{eq:compactness1}, \eqref{eq:compactness2} and \eqref{eq:compactness3} we obtain
	\[
	\mu_d(\varphi) - \mu_c(\varphi) 
	\geq \int_c^d \left( \int -\varphi \lnorm{H} + \esc{\nabla\varphi}{H} + \frac{\partial \varphi}{\partial t} \right)\, d\mu_t^idt. \qedhere
	\]  
\end{proof}

\section{Existence of spacelike Brakke flow in $\R^{n,m}$}\label{Section:EllipticRegularization}

In this section, we adapt Ilmanen's elliptic regularization construction \cite{Ilmanen94} to the pseudo-Euclidean setting. The main result of this section is the following.

\begin{theorem}[Ilmanen's Elliptic Regularization] Let $\Gamma$ be a smooth, properly embedded, $(n-1)$-dimensional, spacelike submanifold of $\Rnm$, and let $M_0$ be a spacelike $n$-rectifiable subset of $\Rnm$. Suppose that 
\begin{enumerate}[(1)]
    \item $\partial M_0 = \Gamma$.
    \item For every $K \Subset \Rnm$,
        \begin{equation}\label{eq:InitialGradientBound}
            s_K = \inf_{x \in K} S(\Tan(M_0,x)) > 0.
        \end{equation}
    \item $M_0 = \opn{graph}(u_{M_0})$ for some Lipschitz function $u_{M_0} : \Omega \Subset \R^n \to \R^m$.
\end{enumerate}
Then there exists an $n$-dimensional spacelike Brakke flow
\[
    t \in [0,\infty) \mapsto \mu_t
\]
with boundary $\Gamma$ such that 
\[
\mu_t \rightharpoonup \sigma^n \resmes M_0
\]
as $t \to 0$. 
\end{theorem}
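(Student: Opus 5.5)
Following Ilmanen's elliptic regularization, I would realize the flow as the limit of translating solutions in one dimension higher. Work in $\mathbf{R}^{n+1,m}=\Rnm\times\R$, with the extra coordinate $z$ spacelike. For $\varepsilon>0$ I would minimize the weighted ``area'' functional
\[
I^\varepsilon(N)=\int_N e^{-z/\varepsilon}\,d\sigma^{n+1}
\]
among spacelike $(n+1)$-dimensional graphs $N$ over $\Omega\times[0,\infty)$ with boundary $(M_0\times\{0\})\cup(\Gamma\times[0,\infty))$. In the spacelike setting area is maximized rather than minimized, so the variational problem must be set up as a maximization, or equivalently via the associated quasilinear PDE. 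The natural choice is to work with the graph functions $u^\varepsilon:\Omega\times[0,\infty)\to\R^m$ and solve the Dirichlet problem for the translator equation
\[
H_{N^\varepsilon}=-\tfrac{1}{\varepsilon}e_{n+m+1}^{\perp}
\]
with boundary data $u_{M_0}$. A critical point $N^\varepsilon$ of $I^\varepsilon$ then satisfies this equation, and the family $N^\varepsilon_t:=N^\varepsilon-\frac{t}{\varepsilon}e_{n+m+1}$ moves by spacelike mean curvature flow. Slicing $N^\varepsilon_t$ in the $z$ variable and translating back produces a smooth spacelike flow $\mu^\varepsilon_t$. This flow satisfies the Brakke inequality of the definition with equality, and its fixed boundary is $\Gamma$.

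\emph{Uniform estimates.} To apply Theorem~\ref{th:CompactnessSpacelikeBrakkeFlows} I need three bounds, uniform in $\varepsilon$:
\begin{enumerate}[(a)]
\item local mass bounds $c_K$;
\item local gradient bounds $g_K$;
\item the conormal lower bound $n_K$.
\end{enumerate}
For (a), the spacelike area of a graph is bounded by the Euclidean area of its domain, since $\sigma^n\le\H^n$ and the domain is fixed. This gives $\mu_t^\varepsilon(K)\le \H^n(\Omega)$ times the multiplicity, so no monotonicity is even needed.

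The gradient bound (b) is the real input. I would obtain it by the maximum principle applied to the evolution of $v^2$ along spacelike mean curvature flow, as in Lambert--Lotay \cite{LambertLotay21} and Li--Salavessa \cite{LiSalavessa11}. In the interior, $v^2$ is a subsolution up to curvature terms of good sign. At the boundary, I would control $v^2$ using the spacelikeness of $\Gamma$ together with barriers built from hypothesis \eqref{eq:InitialGradientBound}, which gives a lower bound on $S$ and hence an upper bound on $v^2$ of $M_0$. For the conormal bound (c): the smooth flows have a genuine unit conormal, so $\lnorm{\nu}=1$. Hence $n_K=1$ holds automatically, as long as the boundary tangent planes stay uniformly spacelike, which is again controlled by $g_K$.

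\emph{Passage to the limit.} With these bounds, Theorem~\ref{th:CompactnessSpacelikeBrakkeFlows} (applied with $\Gamma_i=\Gamma$ on each $[0,T]$, followed by a diagonal argument in $T$) yields a subsequence $\varepsilon_i\to0$ and a spacelike Brakke flow $\mu_t$ with boundary $\Gamma$ on $[0,\infty)$. For the initial condition, I would use the corollary to Theorem~\ref{lemma:monotonicity}: for each test function $\phi$, the quantity $t\mapsto\mu_t(\phi)+c(\phi)t$ is nondecreasing with $c$ uniform in $\varepsilon$. This gives
\[
\liminf_{t\to0}\mu_t(\phi)\ge\mu_0(\phi)=\int_{M_0}\phi\,d\sigma^n .
\]
For the reverse inequality, I would use the fact that $N^\varepsilon$ near $z=0$ is a graph that is $C^0$-close to $M_0\times[0,\infty)$ on scales $\ll\varepsilon$, together with lower semicontinuity of mass. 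Alternatively, I would compare against the elliptic energy identity $\int_0^\infty\!\int -\lnorm{H}\,d\mu^\varepsilon_t\,dt\le C$.

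The main obstacle is step (b): a uniform-in-$\varepsilon$ gradient bound up to the boundary $\Gamma$ for the translators. Without it the tangent planes may approach the light cone, and both the compactness theorem and the identification $\mu_t\rightharpoonup\sigma^n\resmes M_0$ fail. I would first try the boundary gradient estimate of Lambert--Lotay \cite{LambertLotay21} adapted to the translator equation. If $M_0$ is only Lipschitz, I would first smooth $u_{M_0}$ while keeping its boundary equal to $\Gamma$ and its $v^2$ bounds, and then pass to the limit using the closure Theorem~\ref{CTRV}.
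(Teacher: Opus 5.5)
\textbf{There is a genuine gap: the step from the translator to an $n$-dimensional Brakke flow fails.}

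You slice $N^\varepsilon_t=N^\varepsilon-\frac{t}{\varepsilon}\partial_z$ at fixed $z$. You then assert that the resulting $n$-dimensional flows $\mu^\varepsilon_t$ satisfy the Brakke inequality with equality, so that Theorem~\ref{th:CompactnessSpacelikeBrakkeFlows} applies to them with $\Gamma_i=\Gamma$. They do not. Write $N^\varepsilon$ as the track of its level sets. The translator equation $H=-\frac1\varepsilon\partial_z^\perp$ then reproduces mean curvature flow of the slices only up to the second fundamental form of $N^\varepsilon$ in the direction transverse to the slices. That term is of order $\varepsilon^2$ times the normal acceleration of the slices, and it has no sign and no a priori bound.

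Only the $(n+1)$-dimensional translating flow is a Brakke flow. Its boundary moves (the piece $M_0\times\{-t/\varepsilon\}$), so one restricts it to $\Rnm\times(0,\infty)$, where the boundary is the fixed $\Gamma\times(0,\infty)$, and applies the compactness theorem there. One then needs the step your plan omits entirely: the limit is invariant under $z$-translations. The paper obtains this from $\tilde\mu^\lambda_t(\phi^\tau)=\tilde\mu^\lambda_{t+\tau/\lambda}(\phi)$, together with the almost-monotonicity of $t\mapsto\tilde\mu^\lambda_t(\phi)+c(\phi)t$ with $c(\phi)$ independent of $\lambda$. This gives $\tilde\mu_t=\mu_t\times(0,\infty)$ off a countable set of times, and only then is $\mu_t$ an $n$-dimensional spacelike Brakke flow with boundary $\Gamma$.

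\textbf{The two analytic inputs are also missing.}

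For the uniform gradient bound, you only name a hoped-for boundary estimate for the translator. The paper avoids any PDE estimate by putting the constraint $S(\Tan(\Sigma,\cdot))\ge s_{K\times(a,b)}$ into the admissible class $\mathcal{G}$ over which $I^\lambda$ is maximized. The translators are then uniformly spacelike by construction, at the cost of arguing that the constrained maximizer is still stationary away from $\Gamma^*$.

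For the initial condition, in the correct $(n+1)$-dimensional setting the approximating flows start at $\Sigma^\lambda\cap\{z>0\}$, not at $M_0\times(0,\infty)$. So monotonicity only compares $\tilde\mu_t$ with the not-yet-identified limit of $\tilde\mu^\lambda_0$. Moreover, your reverse inequality invokes lower semicontinuity of mass, which is the wrong direction here. Spacelike area is concave and hence \emph{upper} semicontinuous under $C^0$ convergence of graphs: spacelike sawtooth graphs have less area than their flat limit. A lower semicontinuity statement would only reproduce the inequality that monotonicity already gives.

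What is needed are Ilmanen's translator estimates:
\begin{itemize}
\item[(i)] Inserting $\xi(z)\partial_z$ in the weighted first variation gives \eqref{eq:IlmanenMonotonicity}.
\item[(ii)] From this follow the mass lower bound \eqref{eq:LoweBoundArea} and the bound $\int_{\Sigma^\lambda(0,b)}-\lnorm{\partial_z^\perp}\,d\mu^\lambda\le C\lambda^{-1}$, which forces $\Sigma^\lambda(0,b)\to M_0\times(0,b)$.
\item[(iii)] Combined with upper semicontinuity of area, these give convergence of the total mass, and hence of the measures, to $(\sigma^n\resmes M_0)\times(0,\infty)$.
\end{itemize}
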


\begin{proof}
Let $\Gamma^{*} = (M_0 \times \{0\}) \cup (\Gamma \times [0,\infty))$ and $M_0^* = (M_0 \cup \Gamma) \times [0,\infty)$. Then there exists a Lipschitz function $u_{M_0^*} : \Omega \times [0,\infty) \subset \R^{n+1} \to \R^m$ such that $M_0^* = \opn{graph}(u_{M_0^*})$ and $\partial M_0^* = \Gamma^*$. By \eqref{eq:InitialGradientBound}, 
\[
s_{K \times (a,b)} = \inf_{x \in K \times (a,b)} S(\Tan(M_0^*,x)) > 0 
\]
for any $K \times (a,b) \Subset \RnmR$. 

Let $\mathcal{G}$ be the family of spacelike $(n+1)$-rectifiable subsets $\Sigma$ of $\RnmR$ satisfying:
\begin{enumerate}[(1)]
	\item $\partial \Sigma = \Gamma^*$.
	\item There exists a Lipschitz function $u_\Sigma : \Omega \times [0,\infty) \subset \R^{n+1} \to \R^m$ such that $\Sigma = \opn{graph}(u_\Sigma)$.
	\item $\inf_{x \in K \times (a,b)} S(\Tan(\Sigma,x)) \geq s_{K \times (a,b)}$, for any $K \times (a,b) \Subset \RnmR$.
\end{enumerate}
By construction, $M_0^* \in \mcal{G}$. 

For $\lambda > 0$, we would like to maximize Ilmanen's functional 
\[
I^\lambda(\Sigma) = \int_{(x,z) \in \RnmR} e^{-\lambda z}\, d\mu_\Sigma(x,z)
\]
for $\Sigma \in \mcal{G}$. Note that $I^\lambda$ is the area functional with respect to the Ilmanen metric $e^{-2\lambda z/(n+1)}(g_{\Rnm} + dz^2)$. To this end, let $(\Sigma_i)_{i \in \mathbf N} \subset \mathcal{G}$ such that 
\[ 
I^\varepsilon(\Sigma_i) \nearrow \sup_{P \in \mathcal{G}} I^\lambda(P).
\]
Since every $P  \in \mathcal{G}$ is a spacelike graph over $\Omega \times [0,\infty)$ and $\Omega \Subset \R^n$, we obtain
\begin{equation}\label{eq:FunctionalBound}
	I^\lambda(P) \leq \int_{\Omega \times [0,\infty)} e^{-\lambda z} < \infty. 
\end{equation}
Moreover, by mollification, as in \cite[Theorem 4.1]{BartnikSimon82}, we can assume that each $\Sigma_i$ is smooth. By Ascoli-Arzela, there exists $\Sigma \in \mcal{G}$ such that $u_{\Sigma_i} \to u_\Sigma$ strongly in $C^0(\Omega \times [0,\infty))$ and weakly in $H^1(\Omega \times [0,\infty))$. Now, by Minkowski determinant theorem, the area functional is concave as in \cite[Proposition 6.1]{Bartnik88} and, by \cite[Section 8.2.2]{Evans98},
\[ 
 \limsup_{i \to \infty}\opn{area}(\Sigma_i \cap A) 
 \leq \opn{area}(\Sigma \cap A).
\]
for any Borel subset $A$ of $\RnmR$. Therefore,
\[
\sup_{P \in \mathcal{G}} I^\lambda(P) 
= \limsup_{i \to \infty} I^\lambda (\Sigma_i) 
\leq I^\lambda (\Sigma),
\] 
that is, $\Sigma$ maximizes $I^\lambda$ in $\mcal{G}$. 

Suppose there exists a smooth, compactly supported vector field $Y$ on $\Rnm \times \R$ and a family of diffeomorphisms $(\Phi^s)_{s \in (-\delta,\delta)}$ with 
\[ 
\left.\frac{\partial}{\partial_s}\right|_{s=0} \Phi^s = Y, \quad \Phi^0 = id, \quad \Phi^s|_{(\Rnm \times \R) \setminus U} = id
\]
for some neighborhood $U \Subset (\Rnm \times \R) \setminus \Gamma^*$ sufficiently small, and such that
\[
\left. \frac{\partial}{\partial s} \right|_{s=0} I^\lambda(Q_s)  
> 0,
\]
where $Q_s$ is the pushforward of $\Sigma$ by the diffeomorphism $\Phi^s$. Then, $\{\Phi^s\}_{s \in (-\delta,\delta)}$ increases $I^\lambda$ in $U$. Since $\Sigma$ maximizes $I^\lambda$ in $\mathcal{G}$, there exists point $x_0 \in Q_{s_0} \cap U$, where $\Tan(Q_{s_0},x_0)$ exists, such that $S(\Tan(Q_{s_0},x_0)) < \inf_{x \in U} S(\Tan(\Sigma,x))$. This implies that
\[ 
	\opn{area}(Q_{s_0} \cap U) < \opn{area}(\Sigma \cap U). 
\]
However, since $Q_s$ increases $I^\lambda$ in $U$,   
\begin{gather*}
	\int_U e^{-\lambda z}\,d\mu_{Q_{s_0}} > \int_{U} e^{-\lambda z}\,d\mu_\Sigma, 
\end{gather*}
which implies
\[
	e^{\lambda h} \opn{area}(Q_{s_0} \cap U) > \opn{area}(\Sigma \cap U),
\]
where $h = (\max_U z - \min_U z)>0$. Since $U$ is arbitrarily small, we reach a contradiction letting $h \to 0$. Therefore, $\Sigma$ is stationary for $I^\lambda$ in $(\RnmR) \setminus \Gamma^*$, that is, 
\begin{equation}\label{eq:WeightedFirstVariation}
\delta^\lambda \opn{Var}(M)(X) 
= \int \Div^\lambda_{M}X \,d\mu_M 
= \int \Div_M(e^{-\lambda z}X) - \lambda e^{-\lambda z} \esc{\partial_z^\perp}{X}\, d\mu_M = 0, 
\end{equation}
where $\delta^\lambda\Var(M)$ is the first variation of $\Sigma$ with respect to the Ilmanen metric, and $X$ is any $C^1$, compactly supported vector field such that $\spt X \subset (\RnmR) \setminus \Gamma^*$. Then $\Sigma$ has locally bounded first variation with 
\[ 
H =  -\lambda \partial_z^\perp 
\]
for $\mu_\Sigma$ almost everywhere on $\RnmR$, and 
\[ 
\tilde{\beta}(\Sigma) \subset  \Gamma^*.
\]
Moreover, by Theorem \ref{CTRV},    
\[ 
\lnorm{\nu(\Sigma,x)} = 1 
\]
for $\Gamma^*$ almost every $x \in \RnmR$.

Let $\Sigma = \Sigma^\lambda$ the spacelike $(n+1)$-rectifiable subset of $\RnmR$ constructed above, and  let $\mu^\lambda$ its associated Radon measure. Since $\Sigma^\lambda$ is a Lipschitz graph over $\Omega \times [0,\infty) \subset \R^{n+1}$, the areas of $\Sigma^\lambda$ are locally uniformly bounded, independently of $\lambda > 0$. In order to prove that the flow cannot vanish immediately, we need to find uniform lower bounds on area as $\lambda \to \infty$.

\begin{claim}
	If $\xi \in C_c^{0,1}(\R;\R^+)$, then it is possible to insert $Y = \xi(z)\partial_z$ in \eqref{eq:WeightedFirstVariation}, that is, 
	\[ 
	0 = \int e^{-\lambda z}\left(\xi_z \langle \partial_z^\top, \partial_z \rangle -\lambda \xi\right)\,d\mu^\lambda.  
	\] 
\end{claim}

\begin{proof}
	First assume that $\xi \in C_c^1(\R;\R^+)$. Fix a point $x_0 \in \Rnm$ and let $\sigma_R \in C_c^1(\Rnm;\R^{+})$ satisfy $\sigma_R = 1$ on $B_R(x_0)$, $\sigma_R = 0$ off $B_{2R}(x_0)$, $0 \leq \sigma_R \leq 1$ and $ \abs{\nabla \sigma_R}_{\R^{n+m}} \leq 2/R$. Inserting $Y = \sigma_R \xi \partial_z$ in $\eqref{eq:WeightedFirstVariation}$, we obtain
	\begin{align*}
		0 &= \int e^{-\lambda z}\left(\Div_{\Sigma^\lambda} Y - \lambda \esc{\partial_z}{Y} \right)\,d\mu^\lambda\\
		&= \int e^{-\lambda z}\left( \xi\esc{ \nabla^\top\sigma_R}{\partial_z} + \sigma_R\xi_z \esc{\partial_z^\top}{\partial_z} - \lambda \sigma_R\xi \right)\,d\mu^\lambda.
	\end{align*} 
	Since the gradient function is bounded on every $K \Subset \Rnm \times \R$, there exists $c_1 = c_1(n,
	\spt\xi) > 0$ such that
	\[
	\abs{\int e^{-\lambda z}\xi\esc{\nabla^\top \sigma_R}{\partial_z} \,d\mu} \leq \frac{2c_1}{R}\max\abs{\xi} I^\lambda(\Sigma^\lambda),
	\]
	so the first term disappear as we let $R \to \infty$. For the rest of the integrand, there exists $c_2 = c_2(n,
	\spt\xi) > 0$ such that
	\[
	\abs{e^{-\lambda z}\left( \sigma_R\xi_z \esc{\partial_z^\top}{\partial_z} - \lambda \sigma_R\xi\right)}
	\leq 
	\parent{ \lambda^{-1} c_2 \max\abs{\xi_z} + \max\abs{\xi}} \lambda e^{-\lambda z} 
	\]
	a fixed function in $L^1(\mu^\lambda)$. Letting $R \to \infty$, we obtain by the dominated convergence theorem,
	\[ 
	0 = \int e^{-\lambda z}\parent{\xi_z \esc{\partial_z^\top}{\partial_z} -\lambda \xi}\,d\mu^\lambda. 
	\]
	
	Now, let $\xi \in C_c^{0,1}(\R;\R^+)$ and approximate $\xi$ by $\xi^i \in C_c^1(\R;\R^+)$ such that $\xi^i \leq \xi$, $\xi^i \to \xi$ uniformly and $\xi_z^i \rightharpoonup \xi_z$ weakly-$\ast$ in $L^\infty(\R)$. We can immediately pass to the limits in the second term of the integrand. As for the first term, by the co-area formula, 
	\begin{align*}
	\int e^{-\lambda z}\xi_z \esc{\partial_z^\top}{ \partial_z}\,d\mu^\lambda 
	&= 
	\int e^{-\lambda z}\xi_z \lnorm{\nabla^{\Sigma^\lambda}\zeta} \,d\mu^\lambda \\
	&= 
	\int_\R \xi_z \int_{\Sigma^\lambda_z} e^{-\lambda z} \sqrt{\lnorm{\nabla^{\Sigma^\lambda}\zeta}} 1_{\spt\xi} \,d\mu_{\Sigma^\lambda_z}dz.
	\end{align*}
	where $\zeta(x,z) = z$ and $\Sigma^\lambda_z = \partial(\Sigma^\lambda \resmes (\Rnm \times [z,\infty)))$. The function 
	\[
	z \mapsto \int_{\Sigma^\lambda_z} e^{-\lambda z} \sqrt{\lnorm{\nabla^{\Sigma^\lambda} \zeta}} 1_{\spt\xi} \,d\mu_{\Sigma^\lambda_z}
	\]
	is in $L^1(\R)$ since 
	\begin{multline*}
	\int_\R \parent{\int_{\Sigma^\lambda_z} e^{-\lambda z} \sqrt{\lnorm{\nabla^{\Sigma^\lambda} \zeta}} 1_{\spt\xi}\,d\mu_{\Sigma^\lambda_z}}dz \\
	= 
	\int_{\RnmR} e^{-\lambda z} \esc{\partial_z^\top}{\partial_z} 1_{\spt\xi}\,d\mu^\lambda 
	\leq c_2 I^\lambda(\Sigma^\lambda) < \infty, 
	\end{multline*}
	so we can pass to limits in the right-hand side as well.
\end{proof}

Replace $\xi$ by $\xi e^{\lambda z}$, to obtain 
\begin{equation}\label{eq:AreaEstimateI}
	0 
	=  \int  \parent{\xi_z + \lambda\xi} \esc{\partial_z^\top}{\partial_z} - \lambda \xi \,d\mu^\lambda  
	= \int \xi_z \esc{\partial_z^\top}{\partial_z} - \lambda \xi \esc{\partial_z^\perp}{\partial_z} \,d\mu^\lambda. \nonumber
\end{equation}
If we let $\xi$ defined by 
\[
\xi(z) = \begin{cases}
	0, & \text{ on $(-\infty,a]$}\\
	1, & \text{ on $[a+\delta,b]$}\\
	0, & \text{ on $[b+\delta,\infty)$},
\end{cases}
\]
and linearly interpolated between, then by \eqref{eq:AreaEstimateI} we obtain	
\[
	\frac{1}{\delta} \int_{\Sigma^\lambda(b,b+\delta)}\lnorm{ \partial_z^\top} \,d\mu^\lambda 
	-\frac{1}{\delta} \int_{\Sigma^\lambda(a,a+\delta)}\lnorm{\partial_z^\top} \,d\mu^\lambda 
	= -\int \lambda \xi \lnorm{ \partial_z^\perp} \,d\mu^\lambda,
\]
where $\Sigma^\lambda(a,b)$ denotes $\Sigma^\lambda \resmes (\Rkn \times (a,b))$. Thus, for any fixed $\delta>0$ the function
\[
	f_\delta(z) = \frac{1}{\delta} \int_{\Sigma^\lambda(z,z+\delta)} \lnorm{ \partial_z^\top }\,d\mu^\lambda 
\]
is an increasing function of $z \in [0,\infty)$. Let
\[ 
	f(z) 
	= \int_{\Sigma^\lambda_z} \sqrt{\lnorm{\partial_z^\top}}\,d\mu_{\Sigma^\lambda_z} 
	= \int_{\Sigma^\lambda_z} \sqrt{\lnorm{\nabla^{\Sigma^\lambda} \zeta}} \,d\mu_{\Sigma^\lambda_z}, 
\]
for $z \in Z_1$, where $Z_1$ is the full measure subset of $[0,\infty)$ on which $\Sigma^\lambda_z = \langle \Sigma^\lambda, \zeta, z \rangle$ and the other good properties of the slice lemma \cite[1.19]{Ilmanen94} hold. By the co-area formula, 
\[ 
	f_\delta(z) = \frac{1}{\delta} \int_z^{z+\delta} f(z')\, dz',
\]
and, by the Lebesgue differentiation theorem and the increasing property of $f_\delta$, there is full measure set $Z \subset Z_1$ such that $\lim_{\delta \to 0}f_\delta(z) = f(z)$ for $z \in Z$. Therefore, we can pass to limits in the previous to obtain
\begin{equation} \label{eq:IlmanenMonotonicity}
	\int_{\Sigma^\lambda_a} \sqrt{\lnorm{\partial_z^\top}} \, d\mu_{\Sigma^\lambda_a} 
	- \int_{\Sigma^\lambda(a,b)} \lambda \lnorm{\partial_z^\perp} \,d\mu^\lambda 
	= \int_{\Sigma^\lambda_b} \sqrt{\lnorm{\partial_z^\top}} \, d\mu_{\Sigma^\lambda_b}.
\end{equation}

\begin{claim}
	For almost every $a,b \in [0,\infty)$ with $a \leq b$, we have 
	\begin{equation}\label{eq:LoweBoundArea}
	\opn{area}(\Sigma^\lambda \resmes \{a<z<b\}) \geq (b-a+\lambda^{-1})\opn{area}(M_0).
	\end{equation}
\end{claim}

\begin{proof} By \eqref{eq:IlmanenMonotonicity}, 
	\[
	\int_{\Sigma^\lambda_a} \sqrt{\lnorm{\partial_z^\top}} \, d\mu_{\Sigma^\lambda_{a}} 
	- \int_{\Sigma^\lambda(a,b)} \lambda \lnorm{\partial_z^\perp} \,d\mu^\lambda 
	= \int_{\Sigma^\lambda_b} \sqrt{\lnorm{\partial_z^\top}}\, d\mu_{\Sigma^\lambda_b}.
	\]
	This implies that the function 
	\[
	f(z) = \int_{\Sigma^\lambda_z} \sqrt{\lnorm{\partial_z^\top}} \,d\mu_{\Sigma^\lambda_z}
	\]
	is an increasing function of $z \in [0,\infty)$. Then 
	\begin{align*}
		\int_{\Sigma^\lambda(a,b)}\, d\mu^\lambda &= \int_{\Sigma^\lambda(a,b)} \lnorm{\partial_z^\top} - \lnorm{\partial_z^\perp} \,d\mu^\lambda\\
		&\geq \int_{\Sigma^\lambda(a,b)} \sqrt{\lnorm{\partial_z^\top}} -  \lnorm{\partial_z^\perp} \,d\mu^\lambda\\
		&\geq \int_a^b\int_{\Sigma^\lambda_z} \sqrt{\lnorm{\partial_z^\top}} \,d\mu_{\Sigma^\lambda_z}dz + \lambda^{-1} \int_{\Sigma^\lambda_b} \sqrt{\lnorm{\partial_z^\top}}\,d\mu_{\Sigma^\lambda_b} \\
		&\geq (b-a + \lambda^{-1}) \int_{\Sigma^\lambda_0} \sqrt{\lnorm{\partial_z^\top}}\,d\mu_{\Sigma^\lambda_0}\\
        &\geq (b-a+\lambda^{-1})\opn{area}(M_0),      
	\end{align*}
	where we have used that $\Sigma^\lambda_0 = M_0$ and $\lnorm{\partial_z^\top} \geq 1$. 
\end{proof}

For $t \geq 0$, let $\tilde{\mu}^\lambda_t$ be the portion of $\tilde{\mu}^\lambda - \lambda t \partial_z$ in $\Rnm \times (0,\infty)$. Then
\[
t \in [0,\infty) \mapsto \tilde{\mu}^\lambda_t
\]
is a spacelike Brakke flow with boundary $\tilde{\Gamma} := \Gamma \times (0,\infty)$ in $\Rnm \times (0,\infty)$. By the construction of $\Sigma^\lambda$, 
\begin{equation}\label{eq:AreaBoundTranslator}
	\sup_{\lambda > 0} \tilde{\mu}^\lambda(K) < \infty,
\end{equation}
\begin{equation}\label{eq:GradientBoundTranslator}
\inf_{\lambda > 0} \inf_{x \in K} S(\Tan(\tilde{\mu}^\lambda,x)) > 0
\end{equation}
and 
\begin{equation}\label{eq:ConormalBoundTranslator}
\inf_{\lambda > 0} \inf_{x \in K \cap \tilde{\Gamma}} \lnorm{\nu(\tilde{\mu}^\lambda,x)} > 0
\end{equation}
for every $K \Subset \Rnm \times (0,\infty)$. Therefore, by Theorem \ref{th:CompactnessSpacelikeBrakkeFlows} the spacelike Brakke flows $ (\tilde{\mu}^\lambda_t)_{t \geq 0}$ converge as $\lambda \to \infty$ (after passing to a subsequence) to a spacelike Brakke flow $(\tilde{\mu}_t)_{t \geq0}$ in $\Rnm \times (0,\infty)$ with boundary $\tilde{\Gamma}$. 

\begin{claim}
	The limit spacelike Brakke flow $t \in [0,\infty) \mapsto \tilde{\mu}_t$ in $\Rnm \times (0,\infty)$ with boundary $\tilde{\Gamma}$ is translation invariant.
\end{claim}

\begin{proof}
	Suppose that $\phi \in C_c^2(\Rnm \times (0,\infty);\R^+)$, and define 
	\[
		\phi^{\tau}(x,z) = \phi(x,z-\tau).
	\] 
	Let $t \in [0,\infty) \mapsto \tilde{\mu}^\lambda_t$ be the translating spacelike Brakke flow constructed above which converges to $\tilde{\mu}_t$ along some subsequence $\lambda \to \infty$. By construction, $\tilde{\mu}^\lambda_t(\phi^\tau) = \tilde{\mu}^\lambda_{t + \tau / \lambda}(\phi)$. There is a constant $c = c(\phi) > 0$, depending on $\phi$ but independent of $\lambda$, since the areas and gradients functions of $t \in [0,\infty) \mapsto \tilde{\mu}^\lambda_t$ are locally uniformly bounded as $\lambda \to \infty$, so that $t \in [0,\infty) \mapsto \tilde{\mu}^\lambda_t(\phi) + c(\phi) t$ is an increasing function of $t$. Hence if $t<s$, for $\lambda$ sufficiently large so that $t < t + \tau / \lambda < s$, we see that 
	\[ 
		\tilde{\mu}^\lambda_t(\phi) + c(\phi)t 
		\leq \tilde{\mu}^\lambda_t(\phi^\tau) + c(\phi) (t + \tau / \lambda)
		\leq 
		\tilde{\mu}^\lambda_s(\phi) + c(\phi)s. 
	\]
	Taking limits as $\lambda \to \infty$ along such subsequence, we have that
	\[ 
		\tilde{\mu}_t(\phi) \leq \tilde{\mu}_t(\phi^\tau) \leq \tilde{\mu}_s(\phi) + c(\phi)(s-t).
	\]
	Taking limits as $s \searrow t$, 
	\[ 
		\tilde{\mu}_t(\phi) \leq \tilde{\mu}_t(\phi^\tau) \leq \lim_{s \searrow t}\tilde{\mu}_s(\phi).
	\]
	Finally, for all but countably many $t$, the function $t \in [0,\infty) \mapsto \tilde{\mu}_t(\phi)$ is continuous, so $\tilde{\mu}_t(\phi^\tau) =  \tilde{\mu}_t(\phi)$ for a.e. $t \in [0,\infty)$. Therefore, 
	\[ 
		\tilde{\mu}_t = \mu_t \times (0,\infty) 
	\]
	except possibly for countably many $t$, where $\mu_t$ is a Radon measure in $\Rnm$. 
\end{proof}

Since $(\tilde{\mu}_t)_{t\geq 0}$ is a spacelike Brakke flow with boundary $\tilde{\Gamma}$ in $\Rnm \times (0,\infty)$, it follows that $(\mu_t)_{t \geq 0}$ is a spacelike Brakke flow with boundary $\Gamma$ in $\Rnm$.

Finally, we need to show that 
\[
\mu_t \rightharpoonup \sigma^n \resmes M_0
\]
as $t \to 0$. By Ilmanen's product Lemma \cite[Lemma 8.5]{Ilmanen94}, if $\theta \in C_c((0,\infty),\R^{+})$ satisfies $\int \theta = 1$, then $\mu_t(\phi(x)) = \tilde{\mu}_t(\phi(x)\theta(z))$. From \eqref{eq:LoweBoundArea}, we obtain
\begin{equation}\label{eq:LowerBoundAreaSections}
\tilde{\mu}_t(\Rnm \times (a,b)) \geq (b-a)\opn{area}(M_0).
\end{equation}
Approximating $\theta$ by step functions, 
\[
\mu_t(\Rnm) = \int d\mu_t = \int \theta(z)\, d\tilde{\mu}_t(x,z) \geq \opn{area}(M_0).
\]
that is, a spacelike Brakke flow constructed by elliptic regularization does not disappear immediately.

Let $b>0$, and $\pi : \Rnm \times \R \to \Rnm \times \{b\}$ be the orthogonal projection onto $\Rnm \times \{b\}$. We want to compute the volume element on $\pi(\Sigma^\lambda(0,b))$.
Let $(x_0,z_0) \in \Sigma^\lambda(0,b)$ such that $\opn{Tan}(\Sigma^\lambda,(x_0,z_0))$ exists. Since $\dim(\Tan(\Sigma^\lambda,(x_0,z_0)) \cap \Rnm) \in \{k,k+1\}$, we can take an orthonormal basis $\{v_1,\dots,v_{k+1}\}$ of $\Tan(\Sigma^\lambda,(x_0,z_0))$ such that $v_i = (u_i,0)$ for $i=1,\dots,k$ and $v_{k+1}=(u_{k+1},a)$ with $a^2 = \lnorm{\partial_z^\top}$. Thus, $\Tan(\pi(\Sigma^\lambda(0,b)),(x_0,b)) = \opn{span}\parent{\{u_1\dots,u_{k+1}\}}$. We observe that $\lnorm{\partial_z^\perp} = \lnorm{u_{k+1}}$ and, therefore,
\[
d\mu_{\pi(\Sigma^\lambda(0,b))} = \begin{cases}
     \sqrt{-\lnorm{\partial_z^\perp}} d\mu_{\Sigma^\lambda(0,b)}, & \text{if $u_{k+1} \neq 0$}, \\
     0, & \text{if $u_{k+1} = 0$},
\end{cases}
\]

Let $A^\lambda = \pi(\Sigma^\lambda(0,b))$. By the previous computation, 
\begin{align*}
	\opn{area}(A^\lambda) 
	&\leq  \int_{\Sigma^\lambda (0,b)} \sqrt{-\lnorm{\partial_z^\perp}} \, d\tilde{\mu}^\lambda\\ 
	&\leq \left(\int_{\Sigma^\lambda (0,b)}-\lnorm{\partial_z^\perp}\, d\tilde{\mu}^\lambda\right)^{1/2} \opn{area}(\Sigma^\lambda (0,b))^{1/2}.
\end{align*}
The area term is uniformly bounded and, by \eqref{eq:IlmanenMonotonicity}, 
\[
	\int_{\Sigma^\lambda(0,b)} -\lnorm{\partial_z^\perp}\,d\mu^\varepsilon 
	\leq \lambda^{-1}  \int_{\Sigma^\lambda_b} \sqrt{\lnorm{\partial_z^\top}}\, d\mu_{\Sigma^\lambda_b}.
\]
Thus, by \eqref{eq:GradientBoundTranslator}, we see that $\opn{area}(A^\lambda) \to 0$ as $\lambda \to \infty$. We can use this to see that the area of the red region in Figure \ref{Fig:ConvergenceInitialData} is tending to zero. Therefore, by Ascoli-Arzelà theorem, there exist a further subsequence such that $\Sigma^\lambda(0,b)$ converge to $M_0  \times (0,b)$ strongly in $C^0(\Omega \times (0,b))$ and weakly in $H^1(\Omega \times (0,b))$ on compact subsets. Moreover, by \eqref{eq:LowerBoundAreaSections} and the upper semicontinuity of the area functional,  
\begin{align*}
	\opn{area}(M_0  \times(0,b)) &\geq \limsup_{\lambda \to \infty} \opn{area}(\Sigma^\lambda(0,b))\\
	&\geq \liminf_{\lambda \to \infty} \opn{area}(\Sigma^\lambda(0,b)) \\
	&=b\opn{area}(M_0)\\ 
	&= \opn{area}(M_0  \times(0,b)).
\end{align*}
However, since $\Sigma^\lambda \in \mcal{G}$, we have that $\Sigma^\lambda(0,b)$ remains in a compact subset for each $\lambda > 0$. By Prokhorov's theorem, if the total area converges, then the measures must converge. This combines to show that  
\[
\tilde{\mu}_t \rightharpoonup (\sigma^n \resmes M_0) \times (0,\infty) 
\]
as $t \to 0$. 
\end{proof}

\begin{figure}[htb]\label{Fig:ConvergenceInitialData}
	\centering
	\includegraphics[width=0.8\textwidth]{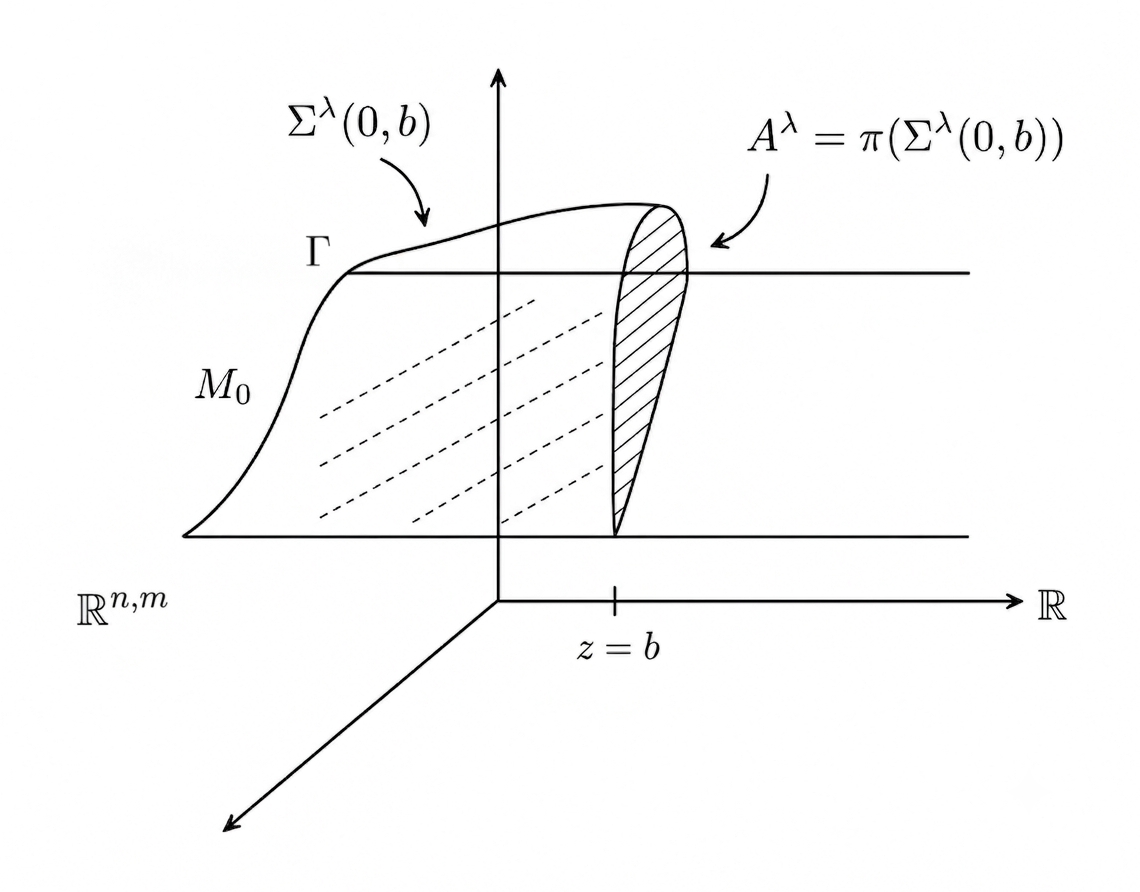}
	\caption{Convergence to the initial condition.}
\end{figure}

\section{Monotonicity with boundary}

Let $\Gamma$ be a smooth, properly embedded, $(n-1)$-dimensional, spacelike submanifold in $\Rnm$. For $x_0 \in \Rnm$ the interior cone over $\Gamma$ with vertex $x_0$ is 
\[ 
I_{\Gamma,x_0} = \{ x_0 + s(x-x_0) : x \in \Gamma,\, 0 \leq s \leq 1\}. 
\]
The multiplicity $\theta(p)=\theta_{\Gamma,p}(p)$ of the interior cone at a point $p \in \Rnm$ is the number of points $(x,s) \in \Gamma \times [0,1]$ such that $x_0 + s(x-x_0) = p$. If the interior cone is spacelike, counting multiplicity, determines a Radon measure $I = I_{\Gamma, x_0}$ on $\Rnm$ given by 
\[
dI_{\Gamma, x_0} = \theta_{\Gamma, x_0}d\sigma^n.
\] 

Consider the function defined by 
\[
	\psi(x,t) = \frac{1}{(-4\pi t)^{n/2}}e^{\frac{\lnorm{x}}{4t}} 
\]
for $x \in \Rnm$ and $t <0$, and its translates
\[
	\psi_{(x_0,t_0)}(x,t) = \psi(x-x_0,t-t_0) = \frac{1}{(4\pi(t_0-t))^{n/2}} e^{\frac{-\lnorm{x-x_0}}{4(t_0-t)}}. 
\]
for $x_0 \in \Rnm$ and $t < t_0$. 

\begin{theorem}[Huisken's Monotonicity]
    \label{th:Monotonicity}
	Let $(\mu_t)_{t \in [a,b]}$ be an $n$-dimensional spacelike Brakke flow with boundary $\Gamma$ in $U \Subset \Rnm$. Let $x_0 \in \Rnm$ and $t_0 > b$ such that $I_{\Gamma,x_0}$ is a spacelike cone. Then 
	\[
	(\mu_t + I_{\Gamma,x_0})\psi_{(x_0,t_0)}(\cdot,t)
	\]
	is an increasing function of $t \in [a,b]$. Indeed,
	\begin{gather*} 
		(\mu_b + I_{\Gamma,x_0})\psi_{(x_0,t_0)}(\cdot,b) - (\mu_a + I_{\Gamma,x_0})\psi_{(x_0,t_0)}(\cdot,a) \\
		\geq \int_a^b\int -\lnorm{\left( H-\frac{\nabla\psi_{(x_0,t_0)}}{\psi_{(x_0,t_0)}}\right)^\perp}\psi_{(x_0,t_0)} \,d\mu_tdt \\
		+ \int_a^b\int \abs{\esc{\nu_M + \nu_I}{\nabla\psi_{(x_0,t_0)}}}\,d\Gamma dt.   
	\end{gather*}
	Furthermore, if $\mu_b\psi_{(x_0,t_0)}(\cdot,b) = \mu_a\psi_{(x_0,t_0)}(\cdot,a)$, then for almost every $t \in [a,b]$,
	\[ H = \frac{\nabla^\perp\psi_{(x_0,t_0)}}{\psi_{(x_0,t_0)}} \]
	holds $\mu_t$-almost everywhere, and
	\[ \nu_{\mu_t} = \frac{x-x_0}{(x-x_0)^\perp} \]
	holds for almost every $x \in \Gamma$ for which $(x-x_0)^\perp$ is nonzero, where $(x-x_0)^\perp$ is the projection of $(x-x_0)$ to $\Tan(\Gamma,x)^\perp$.  
\end{theorem}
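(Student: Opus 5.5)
The plan is to follow the Euclidean argument of White (Huisken's monotonicity with boundary, as in Allard/White's boundary regularity), adapted to the indefinite metric. Write $\psi=\psi_{(x_0,t_0)}$. Since $\psi$ is not compactly supported, first apply the Brakke inequality (3) of the definition of spacelike Brakke flow to $\phi=\chi_R\psi$, where $\chi_R$ is a cutoff equal to $1$ on $B_R(x_0)$. Then let $R\to\infty$, using the area bound on $U\Subset\Rnm$ (indeed $\spt\mu_t\subset U$ is bounded, so the cutoff errors vanish once $R$ is large). This gives
\[
\mu_b\psi(\cdot,b)-\mu_a\psi(\cdot,a)\geq\int_a^b\int\Bigl(-\psi\lnorm{H}+\esc{\nabla\psi}{H}+\partial_t\psi\Bigr)\,d\mu_t\,dt.
\]
Next, complete the square exactly as in the derivation of \eqref{Eq:Young'sInequality}, but without discarding the square:
\[
-\psi\lnorm{H}+\esc{\nabla\psi}{H}=-\lnorm{\Bigl(H-\tfrac{\nabla\psi}{\psi}\Bigr)^\perp}\psi-\esc{H}{\nabla\psi}+\tfrac{\lnorm{\nabla^\perp\psi}}{\psi}.
\]
Here $H\perp\Tan(\mu_t,x)$ by item (1) of the definition of $S\mathscr{V}_n(U,\Gamma)$.

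The term $-\esc{H}{\nabla\psi}$ is handled with the first variation formula of Lemma \ref{lemma:ConormalEquivalence}, applied to $X=\nabla\psi$ (again after cutoff). This gives
\[
-\int\esc{H}{\nabla\psi}\,d\mu_t=\int\Div_{\mu_t}\nabla\psi\,d\mu_t-\int_\Gamma\esc{\nu_M}{\nabla\psi}\,d\sigma^{n-1},
\]
where $\nu_M=\nu(\mu_t,\cdot)$. A direct computation in the pseudo-Euclidean metric, with $\nabla\psi=-\psi\,\frac{x-x_0}{2(t_0-t)}$ and the trace over a $g_S$-orthonormal basis of the spacelike plane, gives the pointwise identity
\[
\partial_t\psi+\Div_S\nabla\psi+\frac{\lnorm{\nabla^{\perp}\psi}}{\psi}=0
\]
for every spacelike $n$-plane $S$. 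This identity is the analogue of Huisken's, and the signs work because $\lnorm{x-x_0}=\lnorm{(x-x_0)^\top}+\lnorm{(x-x_0)^\perp}$. So all interior terms collapse to the square term, and only $-\int_a^b\int_\Gamma\esc{\nu_M}{\nabla\psi}$ remains.

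To treat the boundary term, I use the cone. $I=I_{\Gamma,x_0}$ is a static spacelike cone, so its first variation has mean curvature term $H_I$ with $H_I\perp(x-x_0)$ (cones have radial direction tangent), boundary conormal $\nu_I$ on $\Gamma$, and no contribution at the vertex since $n\geq2$. For $I$ the square term vanishes, because $\nabla^\perp\psi$ is proportional to $(x-x_0)^\perp=0$ on a cone through $x_0$. Hence the same identity gives
\[
\frac{d}{dt}I\psi=\int\partial_t\psi\,dI=\int_\Gamma\esc{\nu_I}{\nabla\psi}\,d\sigma^{n-1}.
\]
Adding the two flows, the boundary contribution becomes $-\int_\Gamma\esc{\nu_M+\nu_I}{\nabla\psi}$, and I must show this is $\geq\int_\Gamma\abs{\esc{\nu_M+\nu_I}{\nabla\psi}}$, i.e. that $-\esc{\nu_M+\nu_I}{\nabla\psi}\geq0$. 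Here $\nu_I=-\frac{(x-x_0)^\perp}{\abs{(x-x_0)^\perp}}$ (the inward radial conormal, projected to $\Tan(\Gamma,x)^\perp$), and $\nabla\psi$ is a negative multiple of $x-x_0$. So I need
\[
\esc{\nu_M}{(x-x_0)^\perp}\geq-\abs{(x-x_0)^\perp}.
\]
In the Euclidean case this is Cauchy--Schwarz with $|\nu_M|\leq1$. Here $\Tan(\Gamma,x)^\perp$ is Lorentzian of signature $(1,m)$, and the reverse Cauchy--Schwarz inequality holds for spacelike vectors only under a suitable causal configuration. This step is the main obstacle. I would try to use $0<\lnorm{\nu_M}\leq1$ (Lemma \ref{lemma:ConormalEquivalence}) together with the spacelikeness of the cone and of the flow to put both vectors in the two-dimensional span of $\nu_M$ and $(x-x_0)^\perp$. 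If that span is spacelike, the Euclidean Cauchy--Schwarz argument applies. If it is not, I would need an extra hypothesis or a restriction of sign. I would also double-check the precise form of the absolute value in the statement, possibly accepting only the version without absolute value.

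For the rigidity statement, equality forces both nonnegative integrands to vanish almost everywhere. The square term gives $H=\nabla^\perp\psi/\psi$. The boundary term forces equality in the Cauchy--Schwarz step, so $\nu_M$ is parallel to $(x-x_0)^\perp$ with unit length, giving the stated formula wherever $(x-x_0)^\perp\neq0$.
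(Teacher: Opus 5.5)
\textbf{The gap.} Your reduction matches the paper's proof step for step, and those steps are fine:
\begin{itemize}
\item Brakke's inequality applied to $\psi$;
\item completing the square using $H\perp\Tan(\mu_t,\cdot)$;
\item the first variation with $X=\nabla\psi$;
\item the identity $\partial_t\psi+\Div_S\nabla\psi+\lnorm{\nabla^\perp\psi}/\psi=0$;
\item the cone computation using $\esc{H_I}{\nabla\psi}=0$.
\end{itemize}
There is one sign slip. With your \emph{inward} $\nu_I$, the sum would be $-\int_\Gamma\esc{\nu_M-\nu_I}{\nabla\psi}$. Your final reduction to $\esc{\nu_M}{(x-x_0)^\perp}\geq-\abs{(x-x_0)^\perp}$ is nonetheless the correct one for the outward conormal $\nu_I=(x-x_0)^\perp/\abs{(x-x_0)^\perp}$, which is the paper's convention.

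The genuine gap is the step you flag, and your proposed way around it cannot work. The space $\Tan(\Gamma,x)^\perp$ has signature $(1,m)$, so it contains no spacelike $2$-plane at all. Hence the reverse Cauchy--Schwarz inequality $\esc{\nu_M}{\nu_I}^2\geq\lnorm{\nu_M}$ always holds, and the sign depends on which sheet of spacelike vectors $\nu_M$ lies in:
\begin{itemize}
\item If $\nu_M$ is in the same sheet as $\nu_I$, then $\esc{\nu_M}{\nu_I}\geq\abs{\nu_M}>0$ and the sign is fine.
\item If $\nu_M$ is in the opposite sheet, then $\esc{\nu_M}{\nu_I}=-\abs{\nu_M}\cosh\theta$, which can be $<-1$ even when $\lnorm{\nu_M}=1$.
\end{itemize}

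\textbf{The statement itself fails.} Work in $\R^{2,1}$ with metric $dx^2+dy^2-dz^2$. Take the static flat half-plane $M=\{z=0,\ y\geq 0\}$, let $\Gamma$ be the $x$-axis, and put $x_0=(0,-1,h)$ with $0<\abs{h}<1$. Then $I_{\Gamma,x_0}$ is a strip in a spacelike plane. Set $c=\sqrt{1-h^2}$. We have $\nu_M=-e_2$ and $\nu_I=(e_2-he_3)/c$, so $-\esc{\nu_M+\nu_I}{\nabla\psi}=\frac{(c-1)\psi}{2(t_0-t)}<0$ on all of $\Gamma$. With $a=(4(t_0-t))^{-1/2}$ one computes
\[
(\mu_t+I_{\Gamma,x_0})\psi(\cdot,t)=\pi^{-1/2}\Bigl(e^{h^2a^2}\int_a^\infty e^{-w^2}\,dw+\int_0^{ca}e^{-w^2}\,dw\Bigr).
\]
Its $a$-derivative is $\pi^{-1/2}e^{h^2a^2}\bigl(2h^2a\int_a^\infty e^{-w^2}dw-(1-c)e^{-a^2}\bigr)$, which is negative for small $a$. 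So the quantity decreases in $t$ when $t_0-t$ is large.

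Compact support does not rescue the statement. Take a large flat disk with a long thin stadium-shaped hole, and place $x_0$ above the middle of the hole. This reproduces the computation up to exponentially small errors.

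\textbf{Relation to the paper.} The paper's proof gets past this point by invoking ``the Cauchy--Schwarz inequality'' and by treating $\Tan(\Gamma,x)^\perp$ as one-dimensional. So it has exactly the hole you suspected.

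\textbf{What can be salvaged.} Your argument does prove the inequality with $-\int_a^b\int\esc{\nu_M+\nu_I}{\nabla\psi}\,d\Gamma\,dt$ in place of the absolute-value term. Monotonicity requires an extra hypothesis, for example $\esc{\nu_{\mu_t}}{(x-x_0)^\perp}\geq-\abs{(x-x_0)^\perp}$ on $\Gamma$. This holds automatically when $\nu_{\mu_t}$ lies in the sheet of $(x-x_0)^\perp$.

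\textbf{Rigidity.} Your rigidity step relies on the Euclidean equality case of Cauchy--Schwarz, which is unavailable here. If $\lnorm{\nu_M}<1$, then $\esc{\nu_M}{\nu_I}=-1$ does not force $\nu_M=-\nu_I$.
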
 

\begin{proof}
	By translating, it suffices to consider the case when $(x_0,t_0) = (0,0)$ and $a \leq b < 0$. A direct computation shows that
	\begin{align*}
		\nabla \psi &= \frac{\psi}{2t}x\\
		\Div_{\mu_t}\nabla\psi & = \frac{\psi n}{2t} + \frac{\psi}{4t^2}\esc{x^\top}{x^\top}\\
		\frac{\partial\psi}{\partial t} &= -\frac{\psi n}{2t} - \frac{\psi}{4t^2}\esc{x}{x}\\
		\frac{\esc{\nabla^\perp\psi}{\nabla^\perp\psi}}{\psi} &= \frac{\psi}{4t^2}\esc{x^\perp}{x^\perp}.
	\end{align*}
	Thus 
	\[ 
	Q_{\mu_t}(\psi) = \frac{\partial\psi}{\partial t} + \Div_{\mu_t}\nabla\psi + \frac{\esc{\nabla^\perp\psi}{\nabla^\perp\psi}}{\psi} = 0. 
	\]
	By the definition of spacelike Brakke flow,
	\begin{gather*}
		\mu_b(\psi) - \mu_a(\psi)\\
		\geq \int_a^b\int -\psi \esc{H}{H} + \esc{H}{\nabla\psi} + \frac{\partial\psi}{\partial t}\, d\mu_t dt\\ 
		= \int_a^b\int -\psi \esc{H}{H} + 2\esc{H}{\nabla\psi} +\Div_{\mu_t}\nabla\psi + \frac{\partial\psi}{\partial t}\, d\mu_t dt
		- \int_a^b\int \esc{\nu_M}{\nabla\psi}\,d\Gamma dt\\
		= \int_a^b\int -\psi \lnorm{\left(H-\frac{\nabla\psi}{\psi}\right)^\perp} + Q_{\mu_t}(\psi) \, d\mu_t dt
		- \int_a^b\int \esc{\nu_M}{\nabla\psi}\,d\Gamma dt\\
	\end{gather*}
	
	For notational simplicity, let us assume the interior cone over $\Gamma$ (with vertex $0$) is embedded, i.e., that the multiplicity $\theta = \theta_{\Gamma,0}$ is $1$ at all points on the cone. We will use $I=I_{\Gamma,0}$ to denote both the interior cone and the associated Radon measure.
	
	The previous computation shows that on $I$, 
	\[ \frac{\partial \psi}{\partial t} = -\Div_{I}\nabla\psi. \]
	Thus
	\begin{align*}
		\frac{d}{dt} I\psi(\cdot,t) &= \frac{d}{dt} \int_I \psi\,d\sigma^n \\
		&= \int_I \frac{\partial\psi}{\partial t}\psi\,d\sigma^n\\
		&= - \int_I \Div_I \nabla\psi\,\sigma^n\\
		&= \int_I \esc{H_I}{\nabla\psi}\, d\sigma^n - \int_\Gamma \esc{\nu_I}{\nabla\psi}\,d\Gamma\\
		&= - \int_\Gamma \esc{\nu_I}{\nabla\psi}\,d\Gamma,
	\end{align*}
	where we have used that $\esc{H_I}{\nabla\psi} = 0$, since $H_I$ is perpendicular to $I$ by definition and $\nabla\psi(x,t) / \psi(x,t)= - x / 2t$ is tangent to $I$, because $I$ is a portion of a cone with vertex at the origin. Thus
	\[  
	I\psi(\cdot,b) -  I\psi(\cdot,a) = \int_a^b\int \frac{d}{dt} I\psi(\cdot,t)\,dt = -\int_a^b\int \esc{\nu_I}{\nabla\psi}\,d\Gamma dt. 
	\] 
	
	Now, if $x \in \Tan(\Gamma,x)$, then 
	\[ \esc{u + \nu_I}{\nabla\psi} = 0  \]
	for all $u \in \Tan(\Gamma,x)^\perp$. If $x^\perp \neq 0$, taking into account that $\Tan(\Gamma,x)^\perp$ is a one dimensional vector space and that $\nu_I$ points outside the origin, 
	\[ \nu_I = \frac{x^\perp}{\sqrt{\esc{x^\perp}{x^\perp}}}, \]
	where $x^\perp$ is the projection of $x$ to $\Tan(\Gamma,x)^\perp$. Finally, if $u \in \Tan(\Gamma,x)^\perp$ with $\esc{u}{u} \leq 1$, then 
	\[ \esc{u + \nu_I}{\nabla\psi} \leq 0 \]
	with equality if and only if $u = -\nu_I$, due to the fact that 
	\[
	\esc{u + \nu_I}{\nabla\psi} \leq 0 \iff \esc{u+\nu_I}{-x^\perp} \leq 0 \iff -(\esc{u}{\nu_I} + 1) \leq 0
	\]
	and the Cauchy-Schwarz inequality. Adding the previous inequalities,
	\begin{gather*}
		(\mu_b+I)\psi(\cdot,b) - (\mu_a+I)\psi(\cdot,a)\\
		\geq \int_a^b\int -\lnorm{\left(H-\frac{\nabla\psi}{\psi}\right)^\perp}\psi \,d\mu_t dt 
		+ \int_a^b\int \abs{\esc{\nu_M + \nu_I}{\nabla\psi}}\,d\Gamma dt. \qedhere 
	\end{gather*} 
\end{proof}

In the noncompact case, we need additional assumptions on the  causal character of $\mu_t$ and the integrability of the kernel $\psi$. To insert $\psi$ into the above formula, for any cutoff function $\phi \in C_c^\infty(\Rnm;\R^+)$, we calculate
\[ 
	Q_{\mu_t}(\phi\psi) = \phi Q_{\mu_t}(\psi) + \psi Q_{\mu_t}(\phi) + 2\esc{\nabla^\top\phi}{\nabla^\top \psi} = \psi Q_{\mu_t}(\phi) + 2\esc{\nabla^\top\phi}{\nabla^\top \psi}
\]
and 
\[ 
	\esc{\nu_M + \nu_I}{\nabla(\phi\psi)} = \psi\esc{\nu_M + \nu_I}{\nabla\phi} + \phi \esc{\nu_M + \nu_I}{\nabla \psi}\]
We now let $\phi = \chi_R$, where 
\[ \chi_{B_R} \leq \chi_R \leq \chi_{B_{2R}} \]
($\chi_{B_R}$ is the characteristic function for the ball $B_R$) and
\[ R \enorm{\nabla\chi_R} + R^2\enorm{\nabla^2\chi_R} \leq c_0, \]
that is, we choose 
\[ 
	\chi_R = \eta\left( \frac{x_1^2 + \cdots + x_{n+m}^2}{R^2} \right) 
\]
with $\eta : \R \to [0,1]$ a smooth decreasing function such that $\eta(x) = 1$ for $x \leq 1$ and $\eta(x) = 0$ for $x \geq 4$. Recalling the estimate $\eqref{eq:GradientEstimateI}$, if $v^2(\mu_t,\cdot) \leq c_{v}$ for all $t \in [a,b]$,
\begin{align*}
	\abs{Q_{\mu_t}(\psi\chi_R)} &\leq \left( \frac{n(n + v^2)c_0}{R^2} + \frac{(\eta')^2}{\eta}\frac{4mv^2}{R^2} \right)\psi \chi_{B_{2R} \setminus B_R} + \frac{2nv^2\abs{\eta'}}{-t}\psi \chi_{B_{2R} \setminus B_R}\\	
	&\leq c(n,m,c_v,c_0)\left(\frac{1}{R^2} + \frac{1}{-b} \right)\psi \chi_{B_{2R} \setminus B_R}
\end{align*}  
and, if $\Gamma$ is uniformly spacelike, 
\begin{align*}
		\abs{\esc{\nu_M + \nu_I}{\nabla \phi}}\psi &\leq  \abs{\esc{\nu_M}{\nabla\phi}} \psi  + \abs{\esc{\nu_I}{\nabla\phi}} \psi \\
		&\leq \frac{c_2(\Gamma,c_0)}{R}\psi \chi_{B_{2R} \setminus B_R} 
\end{align*}
Inserting $\chi_R\psi$ in the spacelike Brakke flow inequality, we obtain
\begin{gather*}
	\int \chi_R\psi \,d\mu_b - \int \chi_R\psi \,d\mu_a\\
	\geq \int_a^b\int -\chi_R\psi \lnorm{\left(H-\frac{\nabla\psi}{\psi} - \frac{\nabla\chi_R}{\chi_R}\right)^\perp}\, d\mu_tdt + 
	\int_a^b \int Q_{\mu_t}(\chi_R\psi)\, d\mu_tdt\\
	+ \int_a^b\int \abs{\esc{\nu_M+\nu_I}{\nabla(\chi_R\psi)}}\, d\Gamma dt.
\end{gather*}
Finally, if
\[
\sup_{t \in [a,b]}\int \psi \,d\mu_t < \infty,
\]
the result follows by the monotone and dominated convergence theorem.

\begin{definition}\label{def:UniformlySpacelikeBrakkeFlow}
	An $n$-dimensional spacelike Brakke flow $(\mu_t)_{t \in I}$ with boundary $\Gamma$ in $\Rnm$ is called uniformly spacelike
	if it has the properties described in the monotonicity formula:
	\begin{enumerate}[(1)]
		\item $\Gamma$ is uniformly spacelike
		\item For almost every $t \in I$, the interior cone $I_{x_0,\Gamma}$ is spacelike for $\mu_t$ almost every $x_0 \in \Rnm$.
		\item For any $[a,b] \subset I$, there exists a positive constant $c_v=c_v(a,b)$ such that
		\[ 
			\sup_{t \in [a,b]}\sup_{x \in \spt\mu_t} v^2(\mu_t,x) \leq c_v.
		\]
		\item For any $[a,b] \subset I$, $x_0 \in \Rnm$ and $t_0>b$, 
		\[
			\lim_{R \to \infty} \sup_{t \in [a,b]}\int_{\Rnm \setminus B_R} \psi_{(x_0,t_0)}\, d\mu_t = 0.
		\]
	\end{enumerate} 
\end{definition}

\section{Tangent flows}

Consider an $n$-dimensional spacelike Brakke flow $(\mu_t)_{t \geq 0}$ in $\Rnm$ with boundary $\Gamma$ such that 
\begin{equation}\label{eq:TangentFlowsMass}
	\sup_{t \in [a,b]}\mu_t(K) < \infty, 
\end{equation}
\[
\sup_{t \in [a,b]} \sup \{v^2(\mu_t, x) : x \in K\} < \infty
\] 
and
\[
\inf_{t \in [a,b]} \inf \{\lnorm{\nu(x,t)} : x \in K \cap \Gamma, \, \nu(x,t) \neq 0 \}  > 0
\]
for every $K \Subset \Rnm$. We now discuss the weak existence of tangent flows at a spacetime point $X_0 = (x_0,t_0) \in \Rnm \times (0,+\infty)$. 

For $\lambda > 0$, let $\mu^\lambda_t$ be the parabolically dilated measures at a point $X_0$ given by  
\begin{equation}\label{eq:ParabolicDilation}
	\mu^{X_0,\lambda}_t (A) = \lambda^n\mu_{t_0 + \lambda^{-2}t}(\lambda^{-1} A + x_0),
\end{equation}
for $t \in \lambda^2[-t_0,+\infty)$. Then, 
\[
t \in \lambda^2[-t_0,\infty) \mapsto \mu^{X_0,\lambda}_t
\] 
is again a $n$-dimensional spacelike Brakke flow in $\Rnm$ with boundary $\lambda \Gamma$.  

\begin{proposition}
    \label{prop:TangentFlows}
	For every sequence $\lambda_i \to \infty$, there is a subsequence $\lambda_{i(j)}$ and a limiting flow $(\mu'_t)_{t \in \R}$ such that $\mu^{X_0,\lambda_{i(j)}}_t \rightharpoonup \mu'_t$ in the sense of Radon measures for all $t \in \R$. If $x_0 \not\in \Gamma$, it is a spacelike Brakke flow in $\Rnm$. If $x_0 \in \Gamma$, it is a spacelike Brakke flow with boundary $\Tan(\Gamma,x_0)$ in $\Rnm$.  
\end{proposition}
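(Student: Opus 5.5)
The plan is to apply the compactness theorem for spacelike Brakke flows (Theorem \ref{th:CompactnessSpacelikeBrakkeFlows}) to the dilated flows $\mu^{X_0,\lambda_i}_t$ on each interval $[-T,T]$, and then extract a diagonal subsequence as $T\to\infty$. First I would check that parabolic dilation preserves the structure. The inner product $\lnorm{\cdot}$ is homogeneous, so the dilation $x\mapsto \lambda(x-x_0)$ sends spacelike planes to the same spacelike planes. Hence $v^2(\mu^{X_0,\lambda}_t,\lambda(x-x_0)) = v^2(\mu_{t_0+\lambda^{-2}t},x)$. The mean curvature scales as $H^\lambda = \lambda^{-1}H$. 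The conormal density $\nu$ is invariant, since both $\eta\beta$ and $\sigma^{n-1}\resmes\Gamma$ scale by $\lambda^{n-1}$. The Brakke inequality is invariant under $(x,t)\mapsto(\lambda x,\lambda^2 t)$. This confirms that $\mu^{X_0,\lambda}$ is a spacelike Brakke flow with boundary $\lambda(\Gamma-x_0)$, as asserted before the proposition.

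Next I would verify the three hypotheses of Theorem \ref{th:CompactnessSpacelikeBrakkeFlows} uniformly in $i$. Fix $K\Subset\Rnm$ and $T>0$. For large $i$, the preimage $x_0+\lambda_i^{-1}K$ lies in a fixed compact $K_0$, and the times $t_0+\lambda_i^{-2}t$ lie in a fixed $[a,b]\subset(0,\infty)$. The gradient bound $g_K$ then follows from the hypothesis on $v^2$ over $K_0\times[a,b]$, and the conormal bound $n_K$ from the one on $\nu$. For the boundaries: if $x_0\notin\Gamma$, then $\lambda_i(\Gamma-x_0)\cap K=\emptyset$ for large $i$, so the boundary disappears. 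If $x_0\in\Gamma$, smoothness of $\Gamma$ gives $\lambda_i(\Gamma-x_0)\to \Tan(\Gamma,x_0)$ in $C^\infty_{\mathrm{loc}}$, which is exactly the boundary convergence required by the compactness theorem.

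The main obstacle is the uniform mass bound $c_K=\sup_i\sup_{t\in[-T,T]}\mu^{X_0,\lambda_i}_t(K)$. This is an Euclidean-type area ratio bound $\mu_s(B_r(x_0))\le Cr^n$ as $r\to0$, with $s$ near $t_0$ at scale $r^2$. The hypothesis \eqref{eq:TangentFlowsMass} does not give it directly. I would derive it from Huisken's monotonicity (Theorem \ref{th:Monotonicity}), in the localized form with cutoff $\chi_R$ that the paper developed afterwards. The argument has three steps.
\begin{itemize}
\item Compare against a backward heat kernel centered at $(x,s+r^2)$. Since $I_{\Gamma,x}$ is spacelike and $\Gamma$ is smooth, the cone term satisfies $I_{\Gamma,x}\psi\le C$ with $C$ uniform for $x$ near $x_0$.
\item The error terms from $Q(\chi_R\psi)$ and the boundary cutoff are bounded by $c(1/R^2+\dots)$ times the mass, using the uniform gradient bound.
\item Monotonicity from a fixed earlier time $a$ then bounds $r^{-n}\mu_s(B_r(x))$ by the Gaussian density at time $a$, which is controlled by $\mu_a(K_0)$. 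On $B_r(x)$, for $\lnorm{y-x}\le r^2$, the Gaussian is at least $c r^{-n}$.
\end{itemize}
Here it is essential to check that the pseudo-Euclidean Gaussian $e^{-\lnorm{y-x}/4(t_0-t)}$ is bounded below on Euclidean balls of radius $r$. This holds because $-\lnorm{\cdot}\ge -\enorm{\cdot}^2$.

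Finally, the limit. Theorem \ref{th:CompactnessSpacelikeBrakkeFlows} gives, on each $[-T,T]$, a subsequence converging for every $t$ to a spacelike Brakke flow with boundary $\Tan(\Gamma,x_0)$, or with no boundary if $x_0\notin\Gamma$. Taking $T=1,2,\dots$ and a diagonal subsequence yields $\mu'_t$ for all $t\in\R$. Being a Brakke flow is a local-in-time condition, so it holds on $\R$.
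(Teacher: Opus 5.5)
Your outline matches the paper's proof: scale invariance of $v^2$ and $\nu$, $H\mapsto\lambda^{-1}H$, the rescaled boundaries $\lambda_i(\Gamma-x_0)$ either leaving every compact set or converging to $\Tan(\Gamma,x_0)$, then Theorem~\ref{th:CompactnessSpacelikeBrakkeFlows} on $[-T,T]$ and a diagonal argument. You are also right that the uniform bound $c_K$ is the crux. Since $\mu^{X_0,\lambda_i}_t(K)=\lambda_i^n\,\mu_{t_0+\lambda_i^{-2}t}(x_0+\lambda_i^{-1}K)$, \eqref{eq:TangentFlowsMass} only gives finiteness for each fixed $i$, and that is all the paper's proof records before invoking compactness. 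The gap is in your remedy: your area-ratio argument runs Huisken's monotonicity in the Euclidean direction, and that direction is reversed in $\Rnm$.

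The normal space of a spacelike plane is negative definite, so $-\langle (H-\nabla\psi/\psi)^\perp,(H-\nabla\psi/\psi)^\perp\rangle\psi\ge 0$. Theorem~\ref{th:Monotonicity} therefore says $(\mu_t+I_{\Gamma,x})\psi_{(x,t_1)}(\cdot,t)$ is \emph{nondecreasing} in $t$. Equivalently, Gaussian ratios decrease in $r$, so the smallest scales carry the largest ratios, and the density $\Theta$ may even be $+\infty$. Comparing time $s$ with an earlier time $a$ thus gives $\int\psi_{(x,s+r^2)}(\cdot,s)\,d\mu_s\ge\int\psi_{(x,s+r^2)}(\cdot,a)\,d\mu_a$, which is a lower bound. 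Comparing with a later time $b\in(s,s+r^2)$ only moves you to the smaller scale $\sqrt{s+r^2-b}<r$. No choice of comparison time bounds $r^{-n}\mu_s(B_r(x))$ from above.

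Your localization step has a second problem. The kernel is bounded below by its Euclidean counterpart but not above. Writing $w=y-x=w_s+w_t$, the kernel equals $e^{(|w_t|^2-|w_s|^2)/4(t_1-t)}$, which is exponentially large on parts of $\spt\mu_s$ timelike-separated from $x$, for instance another sheet lying above $x$. So the cutoff errors are not $O(R^{-2})$ times mass unless the support stays uniformly spacelike-separated from the centre. That is exactly what conditions (2) and (4) of Definition~\ref{def:UniformlySpacelikeBrakkeFlow} impose.

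Hence $c_K$ needs a different input. Three options would work:
\begin{itemize}
\item A uniform bound on the Gaussian densities $\Theta(\mathcal{M},\Gamma,X)$ for $X$ near $X_0$, together with those uniform-spacelikeness conditions. With the reversed monotonicity, this controls every smaller-scale-later-time ratio, hence $\mu_s(B_r(x))\le Cr^n$.
\item An assumed bound $\sup r^{-n}\mu_t(B_r(x))<\infty$ near $X_0$.
\item Graphicality. For a spacelike graph of $u$ over $\R^n$, the induced metric $I-Du^{T}Du$ has determinant at most $1$, so $\sigma^n(\operatorname{graph}u\cap B_r(p))\le\omega_n r^n$ with no monotonicity at all.
\end{itemize}
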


\begin{proof}
	By $\eqref{eq:TangentFlowsMass}$ and $\eqref{eq:ParabolicDilation}$,
	\[
	\sup_{t \in [a,b]}\mu^{X_0,\lambda_i}_t(K) < \infty
	\]
	for every $K \Subset \Rnm$. Moreover, since the parabolic dilations preserve angles,
	\[
	v^2(\mu^{X_0,\lambda_i}_t, \lambda_i (x-x_0)) = v^2(\mu_t, x)  
	\]
	for every $x \in \spt \mu_t$ such that $\Tan(\mu_t,x)$ exists, and 
	\[ 
	\nu(\mu^{X_0,\lambda_i}_t,\lambda_i (x-x_0)) = \nu(\mu_t, x) 
	\]
	for $\sigma^{n-1}$ almost every $x \in \Gamma$. Then by the compactness theorem, there exists a subsequence $\lambda_{i(j)} \to \infty$ and a limit spacelike Brakke flow $(\mu'_t)_{t \in \R}$ with boundary 
	\[
	\Gamma' = \begin{cases}
		\emptyset, & \text{if $x_0 \not\in \Gamma$}\\
		\Tan(\Gamma,0), & \text{if $x_0 \in \Gamma$.}
	\end{cases}
	\] 
\end{proof}

\section{Gaussian density}

Suppose that $\Gamma$ is a smooth, properly embedded $(n-1)$-dimensional spacelike submanifold in $\Rnm$, and suppose that $M$ is any Radon measure on $\Rnm$. If $a \in \Rnm$ such that $I_{\Gamma,a}$ is spacelike, we let $[M,\Gamma, a]$ be the Radon measure
\[ M + I_{\Gamma,a}, \]
and define 
\[ 
	\Theta(M,\Gamma,a,r) = \int \frac{1}{(4\pi r^2)^{n/2}}e^{\frac{-\lnorm{x-a}}{4r^2}}\,d[M,\Gamma, a](x) + \frac{1}{2}\chi_{\Gamma}(a), 
\]
where 
\[ 
\chi_\Gamma(x) =
\begin{cases}
	1 & \text{if } x \in \Gamma,\\
	0 & \text{if } x \not\in \Gamma.
\end{cases}
 \]

As in the Euclidean case \cite{White21}, the term $\frac{1}{2}\chi_\Gamma(x)$ is necessary to make $\Theta_{\textrm{gauss}}(M,\Gamma,a,r)$ continuous as a function of $a$.

Let $\mcal{M} = (\mu_t)_{t \geq 0}$ be a uniform spacelike Brakke flow with boundary $\Gamma$ in $\Rnm$ and $x_0 \in \Rnm$, $t_0>0$ and $r>0$. The monotonicity formula implies that
\[ 
\Theta(\mcal{M},\Gamma,(x_0,t_0),r) =	\Theta(\mu_{t_0-r^2},\Gamma,x_0,r) 
\]
is decreasing in $r \in [0,\sqrt{t_0})$ as long as $I_{\Gamma,x_0}$ is spacelike. Hence, as $r \to 0$, the limit exists, so we can define the Gaussian density of $\mcal{M}$ at $(x_0,t_0)$ as 
\[
\Theta(\mcal{M},\Gamma,(x_0,t_0)) = \lim_{r\to 0} \Theta(\mu_{t_0-r^2},\Gamma,x_0,r) \in [0,+\infty].
\]	

Now suppose that $\mcal{M} = (\mu_t)_{t \in (-\infty,T]}$ is a uniform spacelike Brakke flow with boundary $\Gamma$ in $\Rnm$. Then we may set
\[
\Theta(\mcal{M},\Gamma, \infty) = \lim_{r \to \infty}\Theta(\mcal{M},\Gamma, (x_0,t_0),r),
\]
provided the limit, that exists and it is finite by the monotonicity formula, is independent of $(x_0,t_0)$. 

\begin{proposition}\label{Prop:LowerSemicontinuityI}
	Let $(\mcal{M}_i)_{i \in \mathbf N}$ be a sequence of uniformly spacelike Brakke flows with boundary $\Gamma_i$ in $\Rnm$, independently of $i \in \mathbf{N}$. Suppose that
	\[
		\inf_i \Theta(\mcal{M}_i,\Gamma_i, X_i,r_i) \geq \Theta_0
	\]
	for every $X_i \to 0$, $r_i \to 0$ subject to $r_i \leq R_i$ for some sequence $R_i$. Let $\lambda_i \to \infty$ be another sequence, with $\lambda_i R_i \to \infty$, and suppose the dilated flows $\mcal{D}_{\lambda_i}\mcal{M}_i$ converges to a limit spacelike Brakke flow $\mcal{M}$ in $\Rnm$ with boundary $\Gamma$. Then we have 
	\[
	\Theta(\mcal{M},\Gamma, X,r) \geq \Theta_0
	\]
	for all $X \in \mcal{M}$ and $r>0$. 
\end{proposition}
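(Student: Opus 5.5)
The plan is to follow White's Euclidean argument \cite{White21}: exploit the scaling behaviour of the Gaussian density ratio, and then pass to the limit using the measure convergence of the dilated flows. Write $\mcal{M}_i' = \mcal{D}_{\lambda_i}\mcal{M}_i$ and $\Gamma_i' = \lambda_i\Gamma_i$; the limit flow and boundary are $\mcal{M}$ and $\Gamma$.

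\textbf{Step 1 (scaling).} Fix $X=(x,t)\in\mcal{M}$ and $r>0$. Parabolic dilation sends $\mu_s$ to $\mu^{\lambda}$ as in \eqref{eq:ParabolicDilation}, the boundary $\Gamma_i$ to $\lambda_i\Gamma_i$, and interior cones to interior cones: $I_{\lambda\Gamma,\lambda a}=\lambda I_{\Gamma,a}$, with spacelikeness preserved because dilation preserves the causal character. The heat kernel $\psi$ is invariant under $x\mapsto\lambda x$, $t\mapsto\lambda^2 t$ together with the factor $\lambda^n$. Hence
\[
\Theta(\mcal{M}_i',\Gamma_i',X,r)=\Theta(\mcal{M}_i,\Gamma_i,X_i,r_i),
\]
where $X_i=(x_0+\lambda_i^{-1}x,\,t_0+\lambda_i^{-2}t)\to 0$ (after translating the base point to $0$) and $r_i=r/\lambda_i\to0$. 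Since $\lambda_iR_i\to\infty$, we have $r_i\le R_i$ for $i$ large. The hypothesis therefore gives $\Theta(\mcal{M}_i',\Gamma_i',X,r)\ge\Theta_0$ for all large $i$. The $\tfrac12\chi_\Gamma$ term also scales correctly, because $X_i\in\Gamma_i$ exactly when $X\in\Gamma_i'$.

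\textbf{Step 2 (passage to the limit).} It remains to show
\[
\Theta(\mcal{M},\Gamma,X,r)\ge\limsup_i\Theta(\mcal{M}_i',\Gamma_i',X,r).
\]
This is the main obstacle, and it has three parts.

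(a) The measure term $\int\psi\,d\mu'^i_{t-r^2}$ converges to $\int\psi\,d\mu_{t-r^2}$. Vague convergence handles compact sets. The tail $\int_{\Rnm\setminus B_R}\psi$ must be uniformly small in $i$, and we plan to take this from condition (4) of Definition \ref{def:UniformlySpacelikeBrakkeFlow}, assumed uniform in $i$ (this is what ``uniformly spacelike, independently of $i$'' is meant to provide). Together with the uniform bound on $v^2$, this lets the unbounded kernel $\psi$, whose growth on spacelike sets is controlled by $v^2$, be truncated by $\chi_R$. If $t-r^2$ is one of the countably many exceptional times, we first move slightly in $r$ and use the monotonicity of Theorem \ref{th:Monotonicity} to pass to nearby good times.

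(b) The cone term $\int\psi\,dI_{\Gamma_i',x}$ converges to $\int\psi\,dI_{\Gamma,x}$. This follows from the $C^\infty$ convergence $\Gamma_i'\to\Gamma$ on compact sets, which is implicit in the assumption that the limit flow has boundary $\Gamma$, together with the uniform spacelikeness of the cones.

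(c) The term $\tfrac12\chi_{\Gamma_i'}(x)$ may jump in the limit. If $x\in\Gamma$ but $x\notin\Gamma_i'$, then the cones $I_{\Gamma_i',x}$ contain an extra thin piece that converges to a half-space-like contribution. Its Gaussian weight tends to $\tfrac12$, which compensates for the missing term, exactly as in White's continuity argument for $\Theta$ in $a$. We would verify this explicitly by computing the Gaussian integral of the limiting half-plane cone over $\Tan(\Gamma,x)$ in $\Rnm$. Spacelikeness of the plane keeps $\lnorm{\cdot}\ge0$ on it, so the computation reduces to the Euclidean one.

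Finally, if $X$ is not in the support of $\mcal{M}$, the statement is taken for $X\in\mcal{M}$ only, and the argument above applies to any such $X$. Combining Steps 1 and 2 gives $\Theta(\mcal{M},\Gamma,X,r)\ge\Theta_0$.
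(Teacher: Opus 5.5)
Your route is the paper's. Its proof is exactly your Step~1 (invariance of $\Theta$ under $\mathcal{D}_\lambda$) plus your Step~2(a) (tightness of $\psi\,\mathcal{D}_{\lambda_i}\mathcal{M}_i(-r^2)$ from property (4) of Definition~\ref{def:UniformlySpacelikeBrakkeFlow}). Its displayed computation tracks only $\int\psi\,d\mu$ and $\frac12\chi$, and leaves the cone term out altogether.

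So the paper offers no argument for your Step~2(b), and that is where your proof breaks. The interior cone $I_{\Gamma,x}=\{x+s(y-x):y\in\Gamma,\ 0\le s\le1\}$ is not local. Segments from $x$ to points of $\Gamma_i'$ arbitrarily far away pass through the neighbourhood of $x$ where the Gaussian has its mass. Their weight is essentially the density at the vertex of the cone over that far part, and this density is invariant under dilation. In a blow-up these pieces leave every compact set, so the local $C^\infty$ limit $\Gamma$ does not see them, yet their contribution does not go to zero.

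Here is a concrete example. Let every $\mathcal{M}_i$ be the static flat unit disk $D\subset\R^2\times\{0\}\subset\R^{2,m}$ with boundary the unit circle $\Gamma_0$; this is a uniformly spacelike flow in the sense of Definition~\ref{def:UniformlySpacelikeBrakkeFlow}. Blow up at the centre. For $|x|<\lambda_i$ one has $I_{\lambda_i\Gamma_0,x}=\lambda_iD$ with multiplicity one. Writing $\rho_{x,r}(y)=(4\pi r^2)^{-1}e^{-\langle y-x,y-x\rangle/4r^2}$, this gives $\Theta(\mathcal{D}_{\lambda_i}\mathcal{M}_i,\lambda_i\Gamma_0,X,r)=2\int_{\lambda_iD}\rho_{x,r}\,d\sigma^2\to2$. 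The limit, however, is the static plane with empty boundary, and $\Theta(\mathcal{M},\emptyset,X,r)=1$. If you blow up at a point of $\Gamma_0$ instead, the cone term drops by exactly $\frac12$ at every point of the limit half-plane off its edge.

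So the inequality $\Theta(\mathcal{M},\Gamma,X,r)\ge\limsup_i\Theta(\mathcal{M}_i',\Gamma_i',X,r)$ that Step~2 sets out to prove is false in general. Closing the argument needs extra input that controls the cones globally, for instance the convergence $\int\rho_{x,r}\,dI_{\Gamma_i',x}\to\int\rho_{x,r}\,dI_{\Gamma,x}$ as an additional hypothesis. Neither your proposal nor the paper supplies it.

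Step~2(c) also borrows the wrong mechanism. In White's setting, the half-plane whose weight $\frac12$ compensates the jump of $\frac12\chi_\Gamma$ comes from the exterior cone $\{x+s(y-x):y\in\Gamma,\ s\ge1\}$. That cone is local up to Gaussian tails, and for it your 2(b) and 2(c) together do reproduce White's argument. For the interior cone, if $d_i=\operatorname{dist}(x,\Gamma_i')\to0$, the part of $I_{\Gamma_i',x}$ coming from the nearby piece of $\Gamma_i'$ is a strip of width about $d_i$ between $x$ and $\Gamma_i'$. Its Gaussian weight is $O(d_i/r)\to0$, not $\frac12$.

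You do not need any compensation here. Since $\Gamma$ is closed and $\Gamma_i'\to\Gamma$ locally in Hausdorff distance, $x\notin\Gamma$ forces $x\notin\Gamma_i'$ for large $i$. Hence $\limsup_i\chi_{\Gamma_i'}(x)\le\chi_\Gamma(x)$, which is the favourable direction.

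Finally, the detour through exceptional times in 2(a) is unnecessary. The convergence of the dilated flows holds at every time (Theorem~\ref{th:CompactnessSpacelikeBrakkeFlows}), and the paper uses it directly at $t=-r^2$.
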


\begin{proof}
	Let $X \in \mcal{M}$ and $r>0$. By making a spacetime translation, we can assume that $X=(0,0)$. By property (4) in Definition \ref{def:UniformlySpacelikeBrakkeFlow}, the sequence of measures $\psi \mcal{D}_{\lambda_i}\mcal{M}_i(-r^2)$ is tight for each fixed $r>0$. Therefore  
	\begin{align*}
		\int_{\mcal{M}(-r^2)} (4\pi r^2)^{-n/2} e^{\frac{-\lnorm{x}}{4r^2}} + \frac{1}{2}\chi_{\Gamma}(0) 
		&= \lim_{i} \left( \int_{\mcal{D}_{\lambda_i}\mcal{M}_i(-r^2)}(4\pi r^2)^{-n/2} e^{\frac{-\lnorm{x}}{4r^2}} + \frac{1}{2}\chi_{\mcal{D}_{\lambda_i}\Gamma_i}(0) \right) \\
		&=\lim_{i} \Theta(\mcal{D}_{\lambda_i}\mcal{M}_i, \mcal{D}_{\lambda_i}\Gamma_i, (0,0),r)\\
		&=\lim_{i} \Theta(\mcal{M}_i, \Gamma_i, (0,0),r/\lambda_i)\\
        &\geq \Theta_0,
	\end{align*} 
    where in the last inequality we have used that $\lambda_i R_i \to \infty$. 
\end{proof}

\begin{proposition}\label{Prop:LowerSemicontinuityII}
	Let $(\mcal{M}_i)_{i \in \mathbf N}$ be a sequence of uniformly spacelike Brakke flows with boundary $\Gamma_i$ in $\Rnm$, independently of $i \in \mathbf{N}$. Suppose that $\Gamma_i$ converges to $\Gamma$ in $C^\infty$, and that $\mcal{M}_i$ converges to a spacelike Brakke flow $\mcal{M}$ with boundary $\Gamma$ in $\Rnm$. Then for every $X_i \to X$ and $r_i \to 0$ we have 
	\[
		\liminf_i \Theta(\mcal{M}_i,\Gamma_i, X_i,r_i) \geq \Theta(\mcal{M},\Gamma, X).
	\]
\end{proposition}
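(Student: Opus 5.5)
The plan is to follow the standard Euclidean argument of White \cite{White21}: first use monotonicity to pass from a fixed scale to the vanishing scales $r_i$, then use weak convergence at a fixed positive scale, and finally let that scale go to zero. Write $X_i=(x_i,t_i)$ and $X=(x_0,t_0)$. Fix $r>0$ small, so that $I_{\Gamma_i,x_i}$ and $I_{\Gamma,x_0}$ are spacelike; this is guaranteed by property (2) of Definition \ref{def:UniformlySpacelikeBrakkeFlow} together with the $C^\infty$ convergence $\Gamma_i\to\Gamma$. For $i$ large we have $r_i<r$, so Theorem \ref{th:Monotonicity}, applied to $\mcal{M}_i$ on the interval $[t_i-r^2,\,t_i-r_i^2]$ with vertex $(x_i,t_i)$, gives
\[
\Theta(\mcal{M}_i,\Gamma_i,X_i,r_i)\ \geq\ \Theta(\mcal{M}_i,\Gamma_i,X_i,r).
\]
The monotonicity is applied with the same $\chi_\Gamma$ correction on both sides, since that term does not depend on the scale. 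It therefore suffices to prove
\[
\liminf_i \Theta(\mcal{M}_i,\Gamma_i,X_i,r)\ \geq\ \Theta(\mcal{M},\Gamma,X,r)
\]
for each fixed $r$. Letting $r\to 0$ and using the definition of $\Theta(\mcal{M},\Gamma,X)$ as the limit of the monotone quantity then finishes the proof.

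For fixed $r$ I will split $\Theta$ into three pieces: the $\mu$-part, the cone part $I_{\Gamma_i,x_i}$, and the term $\tfrac12\chi_{\Gamma_i}(x_i)$.

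\emph{The $\mu$-part.} Here two issues arise.
\begin{enumerate}[(a)]
\item The time $t_i-r^2$ varies with $i$. Since $\mu^i_t\rightharpoonup\mu_t$ only for each fixed $t$, I will use the almost-monotonicity of Theorem \ref{lemma:monotonicity}: the map $t\mapsto \mu^i_t(\phi)+c(\phi)t$ is nondecreasing, with $c$ uniform in $i$ because of the uniform bound $c_v$. Comparing with a fixed time $s$ slightly less than $t_0-r^2$ yields $\liminf_i \mu^i_{t_i-r^2}(\phi)\geq \mu_s(\phi)-c(\phi)(t_0-r^2-s+o(1))$.
\item The kernel is not compactly supported. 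I will cut it off with $\chi_R$, whose tail is controlled uniformly by property (4). Because only a lower bound is needed, the nonnegativity of the kernel already allows me to drop the tail entirely.
\end{enumerate}
After that, I will let $s\nearrow t_0-r^2$. Then I will replace $\mu_s$ by $\mu_{t_0-r^2}$, either by choosing $r$ so that $t_0-r^2$ is a continuity time, or by using the one-sided limit. This is enough, because $\Theta(\mcal{M},\Gamma,X)$ is a limit along any sequence $r\to0$ and continuity times are cofinal.

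\emph{The cone part.} The cones $I_{\Gamma_i,x_i}$ converge to $I_{\Gamma,x_0}$ as Radon measures locally, since $\Gamma_i\to\Gamma$ smoothly and $x_i\to x_0$. The spacelike area densities converge because the cones remain uniformly spacelike. Their Gaussian integrals therefore converge.

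\emph{The indicator term.} This is the main obstacle, since $\chi_{\Gamma_i}(x_i)$ need not converge to $\chi_\Gamma(x_0)$. The delicate case is $x_0\in\Gamma$ but $x_i\notin\Gamma_i$, where the term is lost in the limit. As in \cite{White21}, I will show that the combination (cone integral) $+\tfrac12\chi$ is continuous in the vertex. As $x_i$ approaches $\Gamma$ from off the boundary, the cone $I_{\Gamma_i,x_i}$ degenerates near $x_i$ and contributes, in Gaussian density at scale $r$, an extra half-plane's worth of density, namely $\tfrac12$, which exactly compensates the missing indicator. I would verify this by blowing up at $x_0$. There $\Gamma$ becomes the spacelike plane $\Tan(\Gamma,x_0)$, and the cone over a spacelike $(n-1)$-plane from a nearby vertex is a spacelike half $n$-plane, whose Gaussian density is $\tfrac12$; this is where the pseudo-Euclidean normalization of $\psi$ on spacelike planes enters. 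This step requires careful bookkeeping of the sign conventions for the cone relative to $\mu$, and it is where I expect the real work to lie.
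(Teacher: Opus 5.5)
Your route is the paper's: use monotonicity to pass from $r_i$ up to a fixed $r$, prove lower semicontinuity at that fixed scale, then let $r\to 0$. Your handling of the $\mu$-part is sound, and more careful than the paper, which simply ``translates so that $X_i=X=(0,0)$''; it uses almost-monotonicity for the moving times $t_i-r^2$, drops the tail since only a lower bound is needed, and picks $r$ so that $t_0-r^2$ is a continuity time. The genuine gap is your indicator step, which is exactly the case that translation hides.

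The compensation you describe does not happen for the cone used here. In this paper $I_{\Gamma,a}$ is the \emph{interior} cone, with $0\le s\le 1$. Suppose $x_i\notin\Gamma_i$ lies at distance $d_i\to 0$ from $\Gamma_i$. After blowing up, $I_{\Gamma_i,x_i}$ is the slab of width about $d_i$ between $\Tan(\Gamma,x_0)$ and the parallel plane through $x_i$. Its Gaussian mass at a fixed scale $r$ is $O(d_i/r)\to 0$, and it looks like a half-plane only at scales $r\ll d_i$. The half-plane you invoke comes from the full cone ($s\ge 0$), or from the exterior cone ($s\ge 1$) at scales $r\gg d_i$, not from $I_{\Gamma,a}$. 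Your own cone paragraph already excludes an extra $\tfrac12$: you argue there that the cone measures converge locally to $I_{\Gamma,x_0}$. So when $x_0\in\Gamma$ and $x_i\notin\Gamma_i$, your fixed-$r$ inequality is short by exactly $\tfrac12$.

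This cannot be repaired, because the statement itself fails in that case. Take the static flat half-plane $M=\{y_n\le 0\}\subset\R^n\times\{0\}^m$ with $\Gamma=\{y_n=0\}$, which satisfies all the hypotheses. Let $\mcal{M}_i=\mcal{M}$, $X=(0,t_0)$, $X_i=(-d_ie_n,t_0)$ and $r_i=\sqrt{d_i}$. Then $\Theta(\mcal{M},\Gamma,X)=\tfrac12+0+\tfrac12=1$. Let $\Phi$ be the standard normal distribution function and $\delta_i=d_i/(\sqrt2\, r_i)\to 0$. Then
\[
\Theta(\mcal{M},\Gamma,X_i,r_i)=\Phi(\delta_i)+\bigl(\Phi(\delta_i)-\tfrac12\bigr)\to\tfrac12 .
\]
The paper's remark that the $\tfrac12\chi_\Gamma$ term makes $\Theta$ continuous belongs to the exterior-cone setting and is false with interior cones.

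Without its last paragraph, your argument does prove the inequality whenever $\chi_{\Gamma_i}(x_i)\to\chi_\Gamma(x_0)$, i.e.\ when $x_0\notin\Gamma$ or $x_i\in\Gamma_i$ for large $i$. That is also all the paper's proof actually covers.
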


\begin{proof}
	By making a spacetime translation, we can assume that $X_i=X=(0,0)$. Then, for $r>0$ and $i$ sufficiently large, we have that $r_i < r$. Thus 
	\[
	\Theta(\mcal{M},\Gamma,(0,0),r) = \liminf_i \Theta(\mcal{M}_i,\Gamma_i,(0,0),r) \leq \liminf_i \Theta(\mcal{M}_i,\Gamma_i,(0,0),r_i).
	\] 
	for any $r>0$. Letting $r \to 0$, the proposition follows.
\end{proof}

\section{Local regularity}

Finally, we adapt White's local regularity theorem to the pseudo-Euclidean setting. As in \cite{White05}, let $\RnmR$ be the spacetime and, if $X = (x,t)$ is a point in spacetime, denote by $\abs{X}$ its parabolic norm
\[ 
\abs{X} = \abs{(x,t)} = \max\{ \abs{x}_{\R^{n+m}}, \abs{t}^{1/2} \}.
\]
For $\lambda>0$, let $\mcal{D}_\lambda : \RnmR \to \RnmR$ denote the parabolic dilation 
\[
\mcal{D}_\lambda(x,t) = (\lambda x, \lambda^2 t).
\]
The graph of a function $u \colon B_1^n(0) \times (-1,1) \rightarrow \R^m$ is the set 
\[ 
\opn{graph}(u) = \{(x,u(x,t),t) : (x,t) \in B_1^n(0) \times (-1,1)\}.
\]
Let $\mcal{M}$ be a $(n+1)$-dimensional smooth submanifold (with possible boundary) of $\RnmR$ in the classical sense. We define the parabolic $C^{2,\alpha}$-regularity scale of $\mcal{M}$ at $X$, denoted by $K_{2,\alpha}(\mcal{M},X)$ to be the infimum $\lambda > 0$ so that after a suitable rotation in space, the dilated and translated submanifold 
\[ 
	\mcal{D}_\lambda(\mcal{M}-X) \cap (B_1^n(0) \times B_1^m(0) \times (-1,1)) = \opn{graph}(u) 
\]  
for some $u : B_1^n(0) \times (-1,1) \to \R^m$ whose parabolic $C^{2,\alpha}$ norm is $\leq 1$. The $C^{2,\alpha}$-regularity scale $K_{2,\alpha}(\mcal{M}, \cdot)$ verifies 
\[
K_{2,\alpha}(\mcal{D}_\lambda\mcal{M},\mcal{D}_\lambda X)= \lambda
^{-1} K_{2,\alpha}(\mcal{M}, X),
\]
but we will also need a scale invariant version of $K_{2,\alpha}$. If 
\[ d(X,U) = \inf\{ \abs{X-Y} : Y \in U^c \}, \]
then 
\[
d(X,U)K_{2,\alpha}(\mcal{M},X)
\]
is scale invariant.

\begin{definition}
	Let $\mcal{M}$ be a smooth, spacelike mean curvature flow in an open subset $U$ of $\RnmR$. Then 
	\[
	K_{2,\alpha; U}(\mcal{M}) = \sup_{X \in \mcal{M}} d(X,U) \cdot K_{2,\alpha}(\mcal{M},X). 
	\]
	Of course, $K_{2,\alpha; U}(\mcal{M})$ is scale invariant. 
\end{definition}

\begin{proposition}
	Let $\mathcal{M}$ be a smooth, uniformly spacelike mean curvature flow (without boundary) in $\RnmR$. Then $\Theta(\mathcal{M},\infty) \leq 1$, with equality if, and only if,  
	\begin{equation}\label{eq:SpacelikePlaneFlow}
		\mcal{M} = P \times (-\infty,T] 
	\end{equation}
	for some $n$-dimensional spacelike affine plane $P$ of $\Rnm$ and some $T \in (-\infty,\infty]$. 
\end{proposition}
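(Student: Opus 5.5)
We argue via Huisken's monotonicity in the boundaryless case ($\Gamma=\emptyset$, so $I_{\Gamma,x_0}=0$ and the boundary terms vanish). Fix a spacetime point $X_0=(x_0,t_0)$ of $\mcal{M}$. By Theorem \ref{th:Monotonicity}, with the cutoff argument justified by property (4) of Definition \ref{def:UniformlySpacelikeBrakkeFlow}, the Gaussian ratio $r\mapsto\Theta(\mcal{M},X_0,r)=\int\psi_{(x_0,t_0)}(\cdot,t_0-r^2)\,d\mu_{t_0-r^2}$ is nonincreasing in $r$. Since $\mcal{M}$ is smooth, the density at $X_0$ equals $1$: blowing up at a smooth point gives the static multiplicity-one tangent plane $\Tan(\mu_{t_0},x_0)\times\R$. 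For a spacelike plane $P$ the Gaussian integral is exactly $1$, because on $P$ the restriction of $\lnorm{\cdot}$ is a Euclidean metric and $\sigma^n$ is its Lebesgue measure, so $\int_P(4\pi r^2)^{-n/2}e^{-\lnorm{x}/4r^2}\,d\sigma^n=1$. Monotonicity then gives $\Theta(\mcal{M},X_0,r)\geq 1$ for all $r$, hence $\Theta(\mcal{M},\infty)\geq 1$.

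This inequality points the opposite way from the statement, so the sign conventions must be examined first. In Theorem \ref{th:Monotonicity}, $(\mu_t+I)\psi$ is \emph{increasing} in $t$. Since $t=t_0-r^2$ decreases as $r$ increases, $r\mapsto\Theta(\mcal{M},X_0,r)$ is nonincreasing in $r$. Therefore
\[
\Theta(\mcal{M},\infty)=\lim_{r\to\infty}\Theta(\mcal{M},X_0,r)\leq\lim_{r\to0}\Theta(\mcal{M},X_0,r)=1,
\]
which is the claimed inequality. Correcting the first paragraph: the density at $X_0$ is the \emph{largest} value of the ratio, and the ratio decreases to $\Theta(\mcal{M},\infty)$ as $r\to\infty$. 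This reversal comes from the sign $-\lnorm{H}\geq0$ in the definition of a spacelike Brakke flow. The main technical points are then: verifying that the limit $r\to\infty$ exists and is independent of $X_0$, which follows from monotonicity, the integrability in property (4), and the uniform bound on $v^2$; and computing the density at smooth points, where uniform spacelikeness ensures the parabolic rescalings converge smoothly to the tangent plane with Gaussian integral $1$.

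For the equality case, suppose $\Theta(\mcal{M},\infty)=1$. Then $\Theta(\mcal{M},X_0,r)\equiv 1$ for every $r\in(0,\infty)$ and every $X_0$, so the monotonicity inequality is an equality on every interval. By the rigidity part of Theorem \ref{th:Monotonicity}, for a.e.\ $t<t_0$ we get $H=\nabla^\perp\psi/\psi=-(x-x_0)^\perp/2(t-t_0)$. Hence the flow is a self-similarly shrinking solution about every point $X_0$ of $\mcal{M}$. Using two distinct centers, or equivalently $x_0$ and the time translate at the same base point, we subtract the two identities. This gives $(x_0-x_1)^\perp$ proportional to $(x-x_0)^\perp$ with time-dependent coefficients, and combined with the parabolic scaling this forces $(x-x_0)^\perp=0$ along $\mu_t$. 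Thus $H\equiv0$ and each slice $M_t$ is a cone with vertex at every one of its points, that is, a plane. The plane is spacelike and $n$-dimensional by uniform spacelikeness. Since $H=0$ the flow is static, so $\mcal{M}=P\times(-\infty,T]$, where $T$ is the supremum of the existence times. The converse follows from the planar Gaussian computation.

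The step I expect to be the main obstacle is the equality case. Passing from "self-shrinker about every spacetime point" to "static plane" must be done carefully in indefinite signature, since $(x-x_0)^\perp$ is timelike, and the decay assumptions of Definition \ref{def:UniformlySpacelikeBrakkeFlow} are needed to justify applying the rigidity statement on the unbounded domain.
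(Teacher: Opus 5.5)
Your inequality argument is the paper's, once you discard the slip in your first paragraph (there you conclude $\Theta(\mathcal{M},X_0,r)\ge 1$ from a ratio you had just called nonincreasing in $r$). The density at a point of a smooth flow is $1$ because the blow-up is the static multiplicity-one tangent plane. Since $\mu_t(\psi_{X_0}(\cdot,t))$ is nondecreasing in $t$, the map $r\mapsto\Theta(\mathcal{M},X_0,r)$ decreases from $1$ to $\Theta(\mathcal{M},\infty)$.

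The equality case is where you depart from the paper, and there the decisive step is asserted rather than proved. The rigidity gives the shrinker equation $H=(x-x_0)^\perp/(2(t-t_0))$; your extra minus sign is wrong but harmless. It holds only for centers $X_0\in\mathcal{M}$, since only there do you know $\Theta(\mathcal{M},X_0,\cdot)\equiv 1$. So your option of using ``the time translate at the same base point'' is circular: $(x_0,t_1)\in\mathcal{M}$ is essentially the staticness you want to prove. The phrase ``combined with the parabolic scaling this forces $(x-x_0)^\perp=0$'' is also not an argument. Subtracting the identities for centers $(x_0,t_0)$ and $(x_1,t_1)$ with $t_0\neq t_1$ gives $H=w^\perp$ with $w=(x_0-x_1)/(2(t_0-t_1))$. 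That only says the flow is simultaneously a shrinker and a translator, and you would still have to show this forces a static plane. Your claim that each $M_t$ is ``a cone with vertex at every one of its points'' has the same problem: the vertices you actually get lie on later slices $M_{t_0}$, not on $M_t$.

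The gap closes cleanly if you put both centers on the same slice.
\begin{itemize}
\item For $x_0,x_1\in M_{t_0}$, subtracting the two shrinker equations gives $(x_1-x_0)^\perp=0$ at every point of $M_t$, for every $t<t_0$.
\item Letting $x_1\to x_0$ along $M_{t_0}$ gives $\Tan(M_{t_0},x_0)\subset\Tan(M_t,x)$ for all $x\in M_t$. So every tangent plane of $M_t$ equals one fixed spacelike $n$-plane $W$.
\item Hence $H=0$, and the shrinker equation reduces to $(x-x_0)^\perp=0$, i.e.\ $M_t\subset x_0+W$.
\item Since $M_t$ is properly embedded without boundary, $M_t=x_0+W$ for all $t<t_0$, which gives the static plane.
\end{itemize}

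The paper instead works with a single center $X$. Equality makes $(\mathcal{M}-X)\cap\{t\le 0\}$ invariant under $\mathcal{D}_\lambda$. Letting $\lambda\to\infty$ then identifies this flow with its tangent flow at $X$, which is the static plane already found in the inequality part. That route reuses the blow-up you need anyway. The same-slice two-center route is purely pointwise and needs no further convergence of rescaled flows.
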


\begin{proof}
	Let $X \in \mcal{M}$. Then, the dilates $D_\lambda(\mathcal{M}-X)$ converges, as $\lambda \to \infty$, to a limit flow $\mathcal{M}'$ of the form $\eqref{eq:SpacelikePlaneFlow}$. Hence, for $r>1$, 
	\begin{align*}
		\Theta(\mathcal{M}',(0,0),1) &\geq \lim_{\lambda \to \infty} \Theta(\mathcal{D}_\lambda(\mathcal{M}-X),(0,0),r)\\
		&= \lim_{\lambda \to \infty} \Theta({M}-X,(0,0),r/\lambda)\\
		&= \lim_{\lambda \to \infty} \Theta({M},X,r/\lambda)\\
		&= \Theta(\mathcal{M},X).
	\end{align*}   
	By applying a suitable isometry $\Phi : \Rnm \rightarrow \Rnm$ such that $\Phi(P) = \R^n \times \{0\}^m$,
	\begin{align*}
		\Theta(\mathcal{M}',(0,0),1) = \int_{\R^n \times \{0\}^m} \frac{1}{(4\pi)^{n/2}} \exp \left(\frac{-x_1^2 - \cdots - x_n^2}{4}\right)\, d\mu_{\R^n \times \{0\}^m} = 1.
	\end{align*}
	Therefore, $\Theta(\mathcal{M},X) \leq 1$ and, by monotonicity, $\Theta(\mathcal{M},\infty) \leq 1$. 
	
	Furthermore, if $\Theta(\mathcal{M},\infty) = 1$, then $\Theta(\mathcal{M},\infty) = \Theta(\mathcal{M},X) = 1$ for every $X=(x_0,t_0) \in \mathcal{M}$. The proof of the monotonicity formula shows that the flow 
	\[
		\mathcal{M}' = (\mathcal{M} - X) \cap \{(x,t) \in \Rnm \times \R : t \leq 0\}
	\]
	is invariant under parabolic dilations, that is, $\mathcal{M}' = \mathcal{D}_\lambda\mathcal{M}'$. Taking the limit of this equation as $\lambda \to \infty$ shows that $\mathcal{M}'$ has the form $\eqref{eq:SpacelikePlaneFlow}$, which implies that $\{ (x,t) \in \mathcal{M} : t \leq t_0 \}$ has the form $\eqref{eq:SpacelikePlaneFlow}$. Since this is true for every $X \in \mathcal{M}$, then all of $\mathcal{M}$ has the form \eqref{eq:SpacelikePlaneFlow}.  
\end{proof}

In the following theorem, we denote by $P(X_0,r)$ the parabolic cylinder $P(X_0,r) = B_r(x_0) \times (t_0 - r^2, t_0 + r^2)$, where $X_0 = (x_0,t_0)$ and $r \in (0,\infty]$. If $r = \infty$, then $P(X_0,\infty)$ should be interpreted as $\RnmR$.   

\begin{theorem}[White's local regularity theorem]\label{th:LocalRegularity}
	For $0<\alpha<1$, there exist positive numbers $\varepsilon=\varepsilon(n,m,\alpha)$, $\delta = \delta(n,m,\alpha)$ and $C=C(n,m,\alpha)$ with the following property. Suppose $\mcal{M}$ is a graphical, smooth, spacelike mean curvature flow in $P(X_0,R)$ such that
	\[
	\sup_{X \in \mcal{M} \cap P(X_0,r)} \Theta(\mcal{M},X,r) \geq 1 - \ve \\
	\]
	and 
	\[
	\sup_{X \in \mcal{M} \cap P(X_0,r)} v^2(\mcal{M},X) \leq m + \delta
	\]
	for some $r \in (0,R)$. Then 
	\[
	K_{2,\alpha;P(X_0,r/2)} (\mcal{M}) \leq C.
	\]
\end{theorem}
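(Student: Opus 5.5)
We argue by contradiction and blow-up, following White's original proof in \cite{White05}; the pseudo-Euclidean input is the compactness theorem (Theorem \ref{th:CompactnessSpacelikeBrakkeFlows}), the monotonicity formula (Theorem \ref{th:Monotonicity}), the lower semicontinuity of Gaussian density (Propositions \ref{Prop:LowerSemicontinuityI} and \ref{Prop:LowerSemicontinuityII}), and the preceding Proposition characterizing $\Theta(\mcal{M},\infty)=1$. (We read the density hypothesis as $\Theta(\mcal{M},X,r)\geq 1-\ve$ for all $X\in\mcal{M}\cap P(X_0,r)$, which is what the argument uses.)

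\emph{Contradiction sequence and rescaling.} Suppose the statement fails. Then there are graphical smooth spacelike flows $\mcal{M}_i$ in $P(X_i,R_i)$ with
\[
\Theta(\mcal{M}_i,X,r_i)\geq 1-\ve_i \quad\text{and}\quad v^2\leq m+\delta_i
\]
on $P(X_i,r_i)$, where $\ve_i,\delta_i\to 0$, but $K_{2,\alpha;P(X_i,r_i/2)}(\mcal{M}_i)\to\infty$. By scale invariance we may take $r_i=1$ and $X_i=0$. By continuity of $X\mapsto d(X,P)K_{2,\alpha}(\mcal{M}_i,X)$ on the smooth flow, we pick a point $Y_i$ that nearly maximizes this quantity. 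We then translate $Y_i$ to the origin and dilate by $\lambda_i=K_{2,\alpha}(\mcal{M}_i,Y_i)$, obtaining flows $\mcal{M}_i'$ with $K_{2,\alpha}(\mcal{M}_i',0)=1$. The choice of $Y_i$ gives $K_{2,\alpha}(\mcal{M}_i',\cdot)\leq 2$ on parabolic balls of radius $\to\infty$. These balls exhaust spacetime because $d(Y_i,P)\lambda_i\to\infty$.

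\emph{Passing to a limit.} The uniform $C^{2,\alpha}$ bounds give Arzelà–Ascoli subconvergence in $C^{2,\beta}$ for $\beta<\alpha$ to a smooth limit flow $\mcal{M}'$ defined on all of $\RnmR$. The bound $v^2\leq m+\delta_i$ passes to the limit as $v^2\leq m$. Since $v^2\geq m$ always, with equality exactly when every $e^\top_j$ vanishes, every tangent plane of $\mcal{M}'$ is a translate of $\R^n\times\{0\}$. In particular $\mcal{M}'$ is uniformly spacelike, which is what makes the monotonicity formula and the density propositions applicable. The same gradient control, together with tightness of Gaussian-weighted measures (Definition \ref{def:UniformlySpacelikeBrakkeFlow}(4)), lets the Brakke flow compactness theorem identify the smooth limit with the measure-theoretic limit, so densities pass to the limit. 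Proposition \ref{Prop:LowerSemicontinuityI} then gives $\Theta(\mcal{M}',X,r)\geq 1$ for all $X\in\mcal{M}'$ and $r>0$. Hence $\Theta(\mcal{M}',\infty)\geq 1$, and by the preceding Proposition $\mcal{M}'$ is a static spacelike plane flow $P\times(-\infty,T]$.

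\emph{Contradiction.} A flat plane has $K_{2,\alpha}=0$. Smooth $C^{2,\beta}$ convergence to the plane forces $K_{2,\alpha}(\mcal{M}_i',0)\to 0$, contradicting $K_{2,\alpha}(\mcal{M}_i',0)=1$. This step needs that the regularity scale is upper semicontinuous under $C^{2,\beta}$ convergence with uniform $C^{2,\alpha}$ bounds, handled by interpolation as in White. Alternatively, the condition $v^2\to m$ already shows the limit is totally geodesic.

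\emph{Main obstacle.} The hardest step is the point-selection and limiting argument: justifying that $\mcal{M}'$ is defined on all of spacetime and that the Gaussian densities of $\mcal{M}_i'$ converge to those of $\mcal{M}'$. The Gaussian kernel $e^{-\lnorm{x}/4t}$ is only controlled along spacelike directions, so tightness needs the uniform $v^2$ bound. I would first try to get this from the estimate $S(P)\geq(1-2m+2v^2(P))^{-n/2}$, combined with the graphical Lipschitz control coming from $v^2\leq m+\delta$.
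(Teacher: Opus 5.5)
\textbf{The gap is in your final contradiction step.} Up to that point your scheme matches the paper's proof, which follows White: a contradiction sequence, point selection, rescaling so that $K_{2,\alpha}(\mathcal{M}_i',0)=1$, Arzel\`a--Ascoli, and identification of the limit as a static spacelike plane. Your observation that $v^2\le m$ in the limit forces every tangent plane to be $\mathbf{R}^n\times\{0\}$ is exactly how the paper handles $\bar\delta=0$. Your detour through Brakke compactness and Gaussian densities is therefore not needed.

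You claim that $K_{2,\alpha}$ is upper semicontinuous under $C^{2,\beta}$ convergence with uniform $C^{2,\alpha}$ bounds, ``handled by interpolation as in White''. That claim is false, and interpolation cannot supply it. Interpolating between $C^0$ convergence and a $C^{2,\alpha}$ bound gives convergence only in $C^{2,\gamma}$ for $\gamma<\alpha$. The $C^{2,\alpha}$ seminorm itself is merely lower semicontinuous. For example, take $\phi\in C_c^\infty(\mathbf{R}^n)$ and $u_i(x)=i^{-2-\alpha}\phi(ix)$. Then $D^2u_i(x)=i^{-\alpha}D^2\phi(ix)$, so $u_i\to 0$ in every $C^{2,\beta}$ with $\beta<\alpha$. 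Yet $[D^2u_i]_\alpha=[D^2\phi]_\alpha$ for every $i$, so the regularity scale at the origin stays bounded away from zero. Thus $C^{2,\beta}$ convergence of $\mathcal{M}_i'$ to a plane is fully compatible with the normalization $K_{2,\alpha}(\mathcal{M}_i',0)=1$. Geometry and compactness alone give no contradiction, and your argument does not close.

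\textbf{The missing ingredient is the equation, via Schauder estimates.} This is how both White and the paper finish. By the $C^2$ convergence, $\mathcal{M}_i'$ is, on larger and larger parabolic cylinders, the graph of $u_i$ with $\|u_i\|_{C^2}\to 0$ on compact sets. These $u_i$ solve the nonparametric spacelike mean curvature flow system $\partial_t u_i-\Delta u_i=f_i$. Here $f_i$ is $D^2u_i$ multiplied by coefficients that depend smoothly on $Du_i$ and vanish when $Du_i=0$; for $m=1$ this is $f_i=\sum_{j,k}\frac{D_ju_iD_ku_i}{1-|Du_i|^2}D_{jk}u_i$. The coefficients tend to $0$ in $C^1$, using the uniform $v^2$ bound to keep $1-|Du_i|^2$ away from zero. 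Since $D^2u_i$ stays bounded in $C^{\alpha}$ on compact sets, it follows that $f_i\to 0$ in $C^\alpha$. The interior parabolic Schauder estimate
\[
\|u_i\|_{C^{2,\alpha}(K)}\le C\bigl(\|u_i\|_{C^0}+\|f_i\|_{C^{\alpha}}\bigr)
\]
then gives $u_i\to 0$ in $C^{2,\alpha}$ on compact sets. Only this contradicts $K_{2,\alpha}(\mathcal{M}_i',0)=1$.
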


\begin{proof}
	Let $\ol{\varepsilon}$ and $\ol{\delta}$ the infimum of numbers $\varepsilon > 0$ and $\delta > 0$ respectively for which the theorem fails, i.e., for which there is no appropriate $C < \infty$. We must show that $\ol{\varepsilon}, \ol{\delta} > 0$. Let $\varepsilon_i > \ol{\varepsilon}$ and $\delta_i > \ol{\delta}$ be sequences of numbers converging to $\ol{\varepsilon}$ and $\ol{\delta}$ respectively. Then there are sequences $\mcal{M}_i$ and $P(X_i,R_i)$ such that $\mcal{M}_i$ is a smooth, graphical, spacelike mean curvature flow in $P(X_i,R_i)$ and such that 
	\begin{align*}
		\sup_{X \in \mcal{M}_i \cap P(X_i,r_i)} \Theta(\mcal{M}_i,X, r_i) &\geq 1 - \varepsilon_i, \\
		\sup_{X \in \mcal{M}_i \cap P(X_i,r_i)} v^2(\mcal{M}_i, X) &\leq m + \delta_i,  
	\end{align*}
	for some $r_i \in (0,R_i)$ and 
	\[ K_{2,\alpha;P(X_i,r_i/2)}(\mcal{M}_i) \to \infty \]
	Let $U_i = P(X_i,r_i/2)$. By \cite[Proposition 2.8]{White05}, we can assume that 
	\[
	K_{2,\alpha;U_i}(\mcal{M}_i) = s_i < \infty
	\]
	for each $i$. Choose $X_i \in \mcal{M}_i$ so that 
	\[
	d(X_i,U_i)K_{2,\alpha}(\mcal{M}_i,X_i) > \frac{1}{2}s_i.
	\]
	By translating and parabolic dilating, we may assume that $X_i = (0,0)$ and $K_{2,\alpha}(\mcal{M}_i,(0,0)) = 1$, which implies $d((0,0),U_i) \to \infty$. Now let $X \in \mcal{M}_i$. Then 
	\[
	d(X,U_i)K_{2,\alpha}(\mcal{M}_i,X) \leq s_i \leq 2d((0,0),U_i)K_{2,\alpha}(\mcal{M}_i,(0,0)) = 2d((0,0),U_i).
	\]
	Thus 
	\[
	K_{2,\alpha}(\mcal{M}_i,X) \leq 2 \frac{d((0,0),U_i)}{d(X,U_i)} \leq 2 \frac{d((0,0),U_i)}{d((0,0),U_i)-\abs{X}} = 2 \left( 1-\frac{\abs{X}}{d((0,0),U_i)} \right)^{-1}
	\]
	provided the right-hand is positive. Since $d((0,0),U_i) \to \infty$, we have that $K_{2,\alpha}(\mcal{M}_i,\cdot)$ is uniformly bounded as $i \to \infty$ on compact subsets of spacetime. Thus by Ascoli-Arzelà theorem (c.f \cite[Theorem 2.7]{White05}), there exist a subsequence of the $\mcal{M}_i$ (which we denote by the same index) that converges locally to a limit spacelike mean curvature flow $\mcal{M}$ in $\Rnm \times \R$, due to the uniform gradient bound and the fact that  $d((0,0),U_i) \to \infty$. By Proposition \eqref{Prop:LowerSemicontinuityI},
	\begin{align*}
		&\Theta(\mcal{M},X,r) \geq 1 - \ol{\varepsilon}, \\
		&v^2(\mathcal{M},X) \leq m + \ol{\delta},
	\end{align*}
	for all $X \in \mcal{M}$ and $r>0$. 
	
	Now suppose that $\ol{\varepsilon} = 0$ or $\ol{\delta}=0$. We will show that this leads a contradiction. If $\ol{\delta} = 0$, then $v^2(\mcal{M},X) = m$ for all $X \in \mcal{M}$, which implies that $\mathcal{M}$ has the form
	\[
	\mcal{M} = \R^n \times \{0\}^m \times (-\infty,T],
	\]
	for some $T \in [0,\infty]$. Now, if $\ol{\varepsilon} = 0$, the monotonicity formula implies that (after a suitable rotation) $\mathcal{M}$ has also the previous form. By the $C^2$ convergence, there exists $\rho_i \to \infty$ such that
	\[ 
	\mcal{M}_i \cap (B^n_{\rho_i}(0) \times B^m_{\rho_i}(0) \times (-\rho_i,\rho_i))
	\]
	is the graph of a function
	\[
	u_i \colon B^n_{\rho_i}(0) \times ((-\rho_i,\rho_i) \cap (-\infty,T_i]) \rightarrow \R^m
	\]
	for some $T_i \geq 0$ converging to $T$, and with $\norm{u_i}_{C^2} \to 0$. Of course, the $u_i$ satisfy the nonparametric mean curvature flow equation,
	\[
	\frac{\partial u_i}{\partial t} - \Delta u_i = f_i, 
	\] 
	where
	\[
	f_i = \sum_{1 \leq j,k \leq n} \frac{D_ju_iD_ku_i}{1-\abs{Du_i}_{\R^n}^2}D_{jk}u_i.
	\]
	This is the equation for spacelike $n$-dimensional hypersurfaces in $\R^{n,1} = \L^{n+1}$. When $n>1$, the equation is more complicated (see \cite[Appendix A]{LambertLotay21}), but the proof below is still valid. Recall that the $u_i$ are uniformly bounded in $C^{2,\alpha}$ on compact sets and converge to $0$ in $C^2$ on compact sets, and $v^2(\mcal{M}_i,\cdot)$ is also uniformly bounded on compact subsets of spacetime. Thus 
	\[ 
	\frac{D_ju_iD_ku_i}{1-\abs{Du_i}_{\R^n}^2} 
	\]
	converges to $0$ in $C^1$ (on compact sets) and $D_{jk}u_i$
	is bounded in $C^\alpha$ (on compact sets). It follows that $f_i$ converges to $0$ in $C^\alpha$ on compact sets. Thus the Schauder estimates for the heat equation imply that
	\[ 
	\norm{u_i|_K}_{2,\alpha} \leq c(K,n,m,\alpha) \left(\norm{u_i}_0 + \norm{f_i}_{0,\alpha} \right) \rightarrow 0 
	\]
	for every compact subset $K$ of $\Rnm \times \R$. But that contradicts the fact that $K_{2,\alpha}(\mathcal{M}_i, (0,0))$ was normalized to be $1$.  
\end{proof} 

Finally, we consider smooth, spacelike mean curvature flows with  boundary.  

\begin{lemma}\label{lemma:WhiteRegularityBoundary}
	Let $\mcal{M}$ be a smooth, graphical, spacelike mean curvature flow with boundary $\Gamma$ in $\RnmR$. Suppose the boundary has the form 
	\[
	\Gamma = L \times (-\infty,T]
	\] 
	for some $T \in (-\infty,\infty]$, where $L$ is an $(n-1)$-dimensional spacelike linear space of $\Rnm$. Suppose also that
	\[
    \Theta(\mcal{M},\Gamma,\infty) > 1 - \ol{\ve}/2 
    \]
    and 
    \[
    \sup_{X \in \mcal{M}} v^2(\mcal{M},X) \leq m + \delta,
    \]
	where $\delta \in (0,\ol{\delta})$ and $\ol{\ve}$ and $\ol{\delta}$ are as in Theorem \ref{th:LocalRegularity}. Then, after a suitable rotation, $\mcal{M}$ has the form 
	\[
	H \times \{0\}^m \times (-\infty,T],
	\]
	where $H = \{x\in\R^n : x_n \geq 0\}$.
\end{lemma}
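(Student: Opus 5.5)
We follow White's proof of the boundary analogue, adapted to the pseudo-Euclidean setting. The plan is to show that every tangent flow, and then the blow-down, is a static half-plane, and then to use the equality case of Huisken's monotonicity (Theorem \ref{th:Monotonicity}) to conclude that $\mcal{M}$ itself is self-similar and hence a half-plane.

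\textbf{Step 1: tangent flows and density.} Fix $X=(x_0,t_0)\in\mcal{M}$. By Proposition \ref{prop:TangentFlows}, parabolic dilations about $X$ subconverge to a tangent flow $\mcal{M}'$. Since $\mcal{M}$ is smooth up to the boundary, $\mcal{M}'$ is a static multiplicity-one plane if $x_0\notin\Gamma$. If $x_0\in\Gamma$, it is a static half-plane bounded by $L$. By monotonicity and the normalization $\tfrac12\chi_\Gamma$, we get $\Theta(\mcal{M},\Gamma,X)=1$ in both cases. Since $\Gamma=L\times(-\infty,T]$ is a linear cone through $0$ at all times, the interior cone term $I_{\Gamma,x_0}$ is a flat spacelike half-plane piece whenever $x_0$ lies in the span of $L$ and a suitable spacelike direction. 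The uniform bound $v^2\le m+\delta$ ensures that all interior cones and all limits remain spacelike.

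\textbf{Step 2: blow-down.} Dilations $\mcal{D}_{\lambda}\mcal{M}$ with $\lambda\to0$ subconverge, by Theorem \ref{th:CompactnessSpacelikeBrakkeFlows}, to a flow $\mcal{M}_\infty$ with the same boundary $L\times(-\infty,T']$. Its Gaussian density ratio is constant, equal to $\Theta(\mcal{M},\Gamma,\infty)$, so by the equality case of Theorem \ref{th:Monotonicity} it is backwardly self-similar. We then show $\mcal{M}_\infty$ is a static half-plane. First, $\Theta(\mcal{M}_\infty)\ge 1>1-\bar\ve/2$ at every point, by Step 1 and Proposition \ref{Prop:LowerSemicontinuityII}. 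Second, $v^2\le m+\delta<m+\bar\delta$. Third, away from $L$ the interior regularity of Theorem \ref{th:LocalRegularity} applies at all scales, since the density is $\ge 1-\bar\ve$. So $\mcal{M}_\infty\setminus(L\times\R)$ is a smooth self-similar spacelike flow with scale-invariant curvature bounds, which forces zero curvature: it is a union of half-planes through $L$. Because density at infinity is $<1+\tfrac12$ (the upper bound being the half-plane value plus the cone contribution), at most one such half-plane occurs. The fact that $\Theta\ge1$ excludes none, so exactly one half-plane occurs.

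\textbf{Step 3: rigidity.} We then have $\Theta(\mcal{M},\Gamma,\infty)=1=\Theta(\mcal{M},\Gamma,X)$ for every $X$. The equality case of Theorem \ref{th:Monotonicity} gives $H=\nabla^\perp\psi/\psi$ and $\nu=(x-x_0)/|(x-x_0)^\perp|$ for every base point $(x_0,t_0)$. Hence $\mcal{M}\cap\{t\le t_0\}$ is self-similar about every one of its points. A flow that is self-similar about two distinct points is translation invariant in their difference. Varying the points spans spacelike $n$ directions, so $\mcal{M}$ is a static half-plane $P^+\times(-\infty,T]$ with $\partial P^+=L$. A pseudo-orthogonal rotation fixing $L$ and sending $P$ to $\R^n\times\{0\}^m$ then gives the stated form. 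Such a rotation exists because $P$ is spacelike.

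\textbf{Main obstacle.} The hardest step is Step 2: excluding curved self-similar boundary solutions and multiple sheets. I expect this to need care because the interior cone $I_{\Gamma,x_0}$ and the density normalization with $\tfrac12\chi_\Gamma$ must be computed precisely in the indefinite metric to get the sharp threshold $1-\bar\ve/2$. I would first try to reduce to the interior statement by reflecting across $L$ to obtain a flow without boundary and with density $\ge 1-\bar\ve$. The equality case of the conormal in Theorem \ref{th:Monotonicity} should make this reflection $C^1$.
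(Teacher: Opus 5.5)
\textbf{Gap in Step 2.} The claim that the blow-down $\mcal{M}_\infty$ is flat does not follow from what you have. Theorem \ref{th:LocalRegularity} concerns smooth flows without boundary in a cylinder $P(X_0,R)$. So it can only be applied to the smooth dilates $\mcal{D}_\lambda\mcal{M}$, not to the Brakke limit itself, and only on cylinders disjoint from $L\times\R$, i.e.\ at scales below the distance to $L$, not ``at all scales''.

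What this yields is $K_{2,\alpha}(\mcal{M}_\infty,X)\le C/\operatorname{dist}(X,L\times\R)$. That bound is invariant under the parabolic dilations about $(0,0)\in L\times\{0\}$ that fix a self-similar flow with boundary $L$. Combining it with self-similarity therefore gives no contradiction and does not force the curvature to vanish. What is missing is control at scales large compared with the distance to $L$, which is exactly what a boundary regularity statement supplies, so Step 2 is essentially circular.

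There is a separate misstep: Proposition \ref{Prop:LowerSemicontinuityII} bounds the density of a limit from above, not from below. The lower bound on ratios you need comes directly from monotonicity, since in this setting $\Theta(X,r)\ge\Theta(\infty)$.

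\textbf{How the paper closes it.} The missing idea is the one you list only as a fallback, and it is the entire proof in the paper: remove the boundary by reflection, then use the interior theorem at every scale. Apply it directly to the smooth flow $\mcal{M}$, not to a blow-down. The union $\mcal{M}'=\mcal{M}\cup R(\mcal{M})$ is a smooth spacelike flow without boundary with $\Theta(\mcal{M}',\infty)=2\bigl(\Theta(\mcal{M},\Gamma,\infty)-\tfrac12\bigr)>1-\ol{\ve}$. Hence all its Gaussian ratios are $\ge 1-\ve$ for some $\ve<\ol{\ve}$. Theorem \ref{th:LocalRegularity} on $P(X,r)$ then gives $K_{2,\alpha}(\mcal{M}',X)\le 2C/r$ for every $r>0$. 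Letting $r\to\infty$, $\mcal{M}'$ is a static plane and $\mcal{M}$ is the half-plane. Your tangent-flow and rigidity steps (Steps 1 and 3) are then unnecessary.

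\textbf{Two remarks on the reflection.} First, $R$ must fix $L$ pointwise and reverse $L^\perp$. The paper writes $x\mapsto -x$, which agrees with this only when $n=1$; for $n\ge 2$ that union need not even be $C^1$ along $L$.

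Second, smoothness of $\mcal{M}'$ does not come from the equality case of monotonicity. It comes from the odd symmetry of the graph equation. In coordinates with $L=\{x_n=0\}\times\{0\}^m$ and $u=0$ on $\{x_n=0\}$, the function $\tilde u(x',x_n)=-u(x',-x_n)$ again solves $\partial_t u=g^{ij}D_{ij}u$, where $g_{ij}=\delta_{ij}-D_iu\cdot D_ju$. It matches $u$ to first order on $\{x_n=0\}$. It also matches to second order: $u=0$ there makes $g$ block diagonal, and the equation then forces $D_{nn}u=0$.
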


\begin{proof}
    Since $x \in \Rnm \mapsto -x$ is an isometry of $\Rnm$ and $\Gamma = L \times (-\infty, T]$, where $L$ is an $(n-1)$-dimensional spacelike linear space of $\Rnm$, we have that
    \[
    	\mcal{M}' = \mcal{M} \cup \{(-x,t) : (x,t) \in \mcal{M}\}
    \]
    is a smooth, graphical spacelike mean curvature flow (without boundary). Moreover, 
	\[
	\Theta(\mcal{M}',\infty) = 2 \Theta(\mathcal{M}, \infty) = 2\left( \Theta(\mcal{M},\Gamma,\infty) - \frac{1}{2} \right) > 1 - \ol{\ve}.
	\]
	The results follows immediately from Theorem \ref{th:LocalRegularity}. 
\end{proof}

\begin{theorem}\label{th:LocalRegularityII}
	Let $\ve \in (0,\ol{\ve}/2)$ and $\delta \in (0,\ol{\delta})$, where $\ol{\ve}$ and $\ol{\delta}$ are as in Theorem \ref{th:LocalRegularity}. For every $0<\alpha<1$, $n$ and $m$, there exists a number $C=C(n,m,\alpha,\ve,\delta) < \infty$ with the following property. Suppose $\mcal{M}$ is a smooth, graphical, spacelike mean curvature flow with boundary $\Gamma$ in $P(X_0,R)$ such that 
	\begin{align*}
		\sup_{X \in \mcal{M} \cap P(X_0,r)} \Theta(\mcal{M},\Gamma, X,r) \geq 1- \varepsilon\\
	\end{align*}
	and
	\begin{align*}
		\sup_{X \in \mcal{M} \cap P(X_0,r)} v^2(\mcal{M},X) \leq m + \delta
	\end{align*}
	for some $r \in (0,R)$. Then
	\[ 
	K_{2,\alpha; P(X_0,r/2)}(\mcal{M}) \leq C \left( 1+K_{2,\alpha; P(X_0,r/2)}(\Gamma) \right). 
	\]
\end{theorem}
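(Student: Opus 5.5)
We argue by contradiction and blow-up, exactly along the lines of the proof of Theorem \ref{th:LocalRegularity}, now using Lemma \ref{lemma:WhiteRegularityBoundary} to identify the limit at the boundary. Suppose no such $C$ exists. Then there are smooth, graphical, spacelike flows $\mcal{M}_i$ with boundaries $\Gamma_i$ in $P(X_i,R_i)$, and radii $r_i\in(0,R_i)$, satisfying the density hypothesis with $\ve$ and the gradient hypothesis with $\delta$, but with
\[
\frac{K_{2,\alpha;U_i}(\mcal{M}_i)}{1+K_{2,\alpha;U_i}(\Gamma_i)}\to\infty,\qquad U_i=P(X_i,r_i/2).
\]
By \cite[Proposition 2.8]{White05} we may shrink slightly so that $s_i=K_{2,\alpha;U_i}(\mcal{M}_i)<\infty$. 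We pick $Y_i\in\mcal{M}_i$ with $d(Y_i,U_i)K_{2,\alpha}(\mcal{M}_i,Y_i)>s_i/2$, translate $Y_i$ to the origin, and parabolically dilate so that $K_{2,\alpha}(\mcal{M}_i,(0,0))=1$. Then $d((0,0),U_i)\to\infty$, and the same computation as before gives uniform local bounds on $K_{2,\alpha}(\mcal{M}_i,\cdot)$ on compact subsets of spacetime. Since $K_{2,\alpha;U_i}(\Gamma_i)/s_i\to0$ and this quantity is scale invariant, the rescaled boundaries $\Gamma_i$ have $C^{2,\alpha}$ regularity scale tending to $0$ on compact sets. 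Hence, after passing to a subsequence, either $\Gamma_i$ leaves every compact set, or $\Gamma_i$ converges in $C^{2}$ to a flat static boundary $L\times\R$, with $L$ an $(n-1)$-dimensional spacelike affine plane (spacelike because of the $v^2$ bound).

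By Arzelà--Ascoli (cf. \cite[Theorem 2.7]{White05}) together with the uniform gradient bound, $\mcal{M}_i$ converges smoothly on compact sets, up to the boundary, to a smooth graphical spacelike flow $\mcal{M}$ in all of $\RnmR$. In the second alternative, $\mcal{M}$ has boundary $L\times(-\infty,T]$. Next we pass the hypotheses to the limit. Proposition \ref{Prop:LowerSemicontinuityI}, in its boundary form with $\Gamma_i\to\Gamma$, gives $\Theta(\mcal{M},\Gamma,X,r)\ge 1-\ve$ for all $X$ and $r>0$, and the $v^2$ bound persists pointwise. Letting $r\to\infty$ yields $\Theta(\mcal{M},\Gamma,\infty)\ge 1-\ve>1-\ol{\ve}/2$. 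The two alternatives are then handled as follows.
\begin{enumerate}[(1)]
\item If the boundary escapes, the limit has no boundary and $\Theta\ge 1-\ve>1-\ol{\ve}$. The argument of Theorem \ref{th:LocalRegularity} (equality case of the density bound, together with $\Theta\le1$) makes $\mcal{M}$ a static spacelike plane.
\item If the boundary survives, Lemma \ref{lemma:WhiteRegularityBoundary} shows that $\mcal{M}$ is, after a rotation, a static half-plane $H\times\{0\}^m\times(-\infty,T]$.
\end{enumerate}

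In either case $\mcal{M}$ is flat, so we can write $\mcal{M}_i$ near the origin as graphs $u_i$ with $\|u_i\|_{C^2}\to0$ on compact sets. In case (2) the domains are half-balls whose boundaries converge in $C^{2,\alpha}$ to a flat boundary, and $u_i$ equals the boundary data there; this data tends to $0$ in $C^{2,\alpha}$ because $K(\Gamma_i)\to0$. As in the interior proof, the nonlinear term $f_i$ tends to $0$ in $C^{\alpha}$. Boundary Schauder estimates for the heat equation with Dirichlet data on a flat-ish boundary then give $\|u_i\|_{C^{2,\alpha}(K)}\to0$. This contradicts $K_{2,\alpha}(\mcal{M}_i,(0,0))=1$.

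I expect the main obstacle to be the limit step near the boundary, for two reasons. First, the flows must converge smoothly up to $\Gamma_i$, which requires the regularity scale to control the boundary behaviour uniformly. Second, Proposition \ref{Prop:LowerSemicontinuityI} must pass the boundary density, including the interior cones $I_{\Gamma_i,X}$ and the $\tfrac12\chi_\Gamma$ term, to the limit. To handle the second point, I would check that $I_{\Gamma_i,X}\to I_{L,X}$ as measures weighted by the Gaussian, using tightness from property (4) of Definition \ref{def:UniformlySpacelikeBrakkeFlow}.
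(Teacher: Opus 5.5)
Your outline is the same as the paper's proof. It argues by contradiction and normalizes to $K_{2,\alpha}(\mcal{M}_i,(0,0))=1$ with $d((0,0),U_i)\to\infty$ and $K_{2,\alpha}(\Gamma_i,\cdot)\to 0$ on compact sets. It then uses Arzel\`a--Ascoli, splits into $\Gamma=\emptyset$ or $\Gamma=L\times(-\infty,T]$, applies Lemma~\ref{lemma:WhiteRegularityBoundary} for the half-plane, and finishes with boundary Schauder estimates. You also make explicit, via Proposition~\ref{Prop:LowerSemicontinuityI}, the bound $\Theta(\mcal{M},\Gamma,\infty)\ge 1-\ve>1-\ol{\ve}/2$, which the paper only asserts.

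\textbf{Gap in case (1).} The step where the boundary escapes does not work as you justify it. Here $\ve>0$ is fixed, so the limit only satisfies $\Theta(\mcal{M},X,r)\ge 1-\ve$. Together with $\Theta\le 1$, this does not force $\Theta(\mcal{M},\infty)=1$, so the equality case is not available. In the proof of Theorem~\ref{th:LocalRegularity} it is used only under the hypothetical $\ol{\ve}=0$.

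\textbf{Fix.} Use the mechanism behind Lemma~\ref{lemma:WhiteRegularityBoundary} instead. The limit is a smooth graphical spacelike flow in all of $\RnmR$ with $\Theta\ge 1-\ve$, $\ve<\ol{\ve}$, and $v^2\le m+\delta$, $\delta<\ol{\delta}$. So Theorem~\ref{th:LocalRegularity} applies on $P(X,\rho)$ for every $\rho>0$ and gives $K_{2,\alpha}(\mcal{M},X)\le 2C/\rho\to 0$. Hence $\mcal{M}$ is a static spacelike plane. This also shows case (1) needs only $\ve<\ol{\ve}$; the restriction $\ve<\ol{\ve}/2$ is used only for the reflection in case (2).

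\textbf{Caution on your cone check.} The check $I_{\Gamma_i,X}\to I_{L,X}$ that you propose cannot succeed in case (1). By definition $I_{\Gamma_i,x}$ always contains a conical sector with vertex $x$. When $\Gamma_i$ is far away, its Gaussian weight at a fixed scale is approximately the normalized solid angle subtended by $\Gamma_i$ at $x$ (about $\tfrac12$ if $\Gamma_i$ is nearly flat), so it does not vanish as $\Gamma_i$ recedes. Since the density hypothesis is a lower bound, this nonnegative term cannot simply be discarded. Getting ``$\Theta\ge 1-\ve$ for the boundaryless limit'' therefore rests entirely on Proposition~\ref{Prop:LowerSemicontinuityI} as stated, whose proof omits the cone term. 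The paper implicitly relies on this too.
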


\begin{proof}
	If we assume that the theorem is false, we get a sequence of smooth, graphical, spacelike mean curvature flows $\mcal{M}_i$ with boundary $\Gamma_i$ in $P(X_i,R_i)$ such that 
	\begin{align*}
		\sup_{X \in \mcal{M}_i \cap P(X_i,r_i), } \Theta(\mcal{M}_i,\Gamma_i, X,r_i) \geq 1- \varepsilon,\\
		\sup_{X \in \mcal{M}_i \cap P(X_i,r_i)} v^2(\mcal{M}_i,X) \leq m + \delta,
	\end{align*}
	for some $r_i \in (0,R_i)$, and such that 
	\[
	\frac{K_{2,\alpha;P(X_i,r_i/2)}(\mcal{M}_i)}{1+K_{2,\alpha;P(X_i,r_i/2)}(\Gamma_i)} \to \infty. 
	\]
	Let $U_i = P(X_i,r_i/2)$. As before, by suitably translating and dilating, we may assume that $(0,0) \in \mcal{M}_i$ and that
	\begin{align*}
		K_{2,\alpha}(\mcal{M}_i,(0,0)) &= 1 \\
		d((0,0),U_i) &= d((0,0),U_i)K_{2,\alpha}(\mcal{M}_i,(0,0)) > \frac{1}{2}K_{2,\alpha;U_i}(\mcal{M}_i). 
	\end{align*}
	Thus $d((0,0),U_i) \to \infty$ and $ K_{2,\alpha;U_i}(\Gamma_i)/d((0,0),U_i) \to 0$, which implies $K_{2,\alpha}(\Gamma_i,\cdot) \to 0$ uniformly on compact subsets in spacetime. As before, the functions $K_{2,\alpha}(\mcal{M}_i,\cdot)$ are uniformly bounded on compact subsets in spacetime. Thus, by the Ascoli-Arzelà Theorem (c.f. \cite[Theorem 2.7]{White05}) and the uniform gradient bound, there exists a subsequence (which we denote by the same index) such that $\mcal{M}_i$ convergence in $C^2$ on compact subsets to a spacelike mean curvature flow $\mcal{M}$ with boundary $\Gamma$ in $\RnmR$. By the Ascoli-Arzelà Theorem (c.f. \cite[Theorem 2.7]{White05}) and the uniform gradient bound, $K_{2,\alpha}(\Gamma,\cdot) \equiv 0$ and so the boundary $\Gamma$ (after a suitable translation) must be one of the following:
	\[
	\Gamma = \begin{cases}
		\emptyset, \\
		L \times (-\infty,T],
	\end{cases}
	\]
	where $L$ is $(n-1)$-dimensional spacelike vector subspace of $\Rnm$ and $T \in [0,\infty]$. If $\Gamma = \emptyset$, the rest of the proof is just as in the case without boundary. Finally, if $\Gamma = L \times (-\infty,T]$, the hypothesis of Lemma \ref{lemma:WhiteRegularityBoundary} are satisfied, so (after a suitable rotation) $\mcal{M}$ has the form $H \times \{0\}^n \times (-\infty,T]$. The rest of the proof is exactly as in the case without boundary, except that now we now
	apply Schauder estimates at the boundary on a sequence of domains in $\RnmR$ converging to $H \times (-\infty,T]$.  
\end{proof}
	
\bibliographystyle{alpha}
\bibliography{References}
\end{document}